\newcommand{\arxiv}[1]{\href{https://arxiv.org/abs/#1}{\texttt{ArXiv:#1}}}
\newcommand{\arxivmath}[1]{\href{https://arxiv.org/abs/math/#1}{\texttt{ArXiv:#1}}}
\newcommand{\arxivmaph}[1]{\href{https://arxiv.org/abs/math-ph/#1}{\texttt{ArXiv:#1}}}
\theoremstyle{plain}
\newtheorem{thm}{Theorem}
\newtheorem{lem}{Lemma}[section]
\newtheorem{prop}{Proposition}[section]
\theoremstyle{definition}
\newtheorem{defn}{Definition}
\newtheorem{rem}{Remark}[section]
\numberwithin{equation}{section}
\newcommand{\E}[1]{\mathbf{E}\left[#1\right]}
\newcommand{\pr}[1]{\mathbf{Pr}\left[#1\right]}
\newcommand{\ind}[1]{\mathbf{1}_{\{ #1 \}}}
\newcommand{\dt}[1]{\mathrm{det}\left(#1\right)}
\newcommand{\D}{\Delta}
\newcommand{\vt}{\, | \,}
\newcommand{\veps}{\vec{\varepsilon}}
\newcommand{\eps}{\varepsilon}
\newcommand{\Z}{\mathbb{Z}}
\newcommand{\R}{\mathbb{R}}
\newcommand{\C}{\mathbb{C}}
\newcommand{\Ai}{\mathrm{Ai}}
\newcommand{\hb}{\mathcal{H}}
\newcommand{\G}{\mathcal{G}}
\newcommand{\Gb}{\mathbold{G}}
\newcommand{\J}{\mathcal{J}}
\title{Revisiting the temporal law in KPZ random growth}
\author{Mustazee Rahman \thanks{\textsc{Department of Mathematical Sciences, Durham University}. \textit{Email}: \texttt{mustazee@gmail.com}}}
\date{}
\begin{document}

\maketitle

\begin{abstract}
    This article studies the temporal law of the KPZ fixed point. For the stationary geometry, we find the two-time law, which extends the single time law due to Baik-Rains and Ferrari-Spohn. For the droplet geometry, we find a relatively simpler formula for the multi-time law compared to a previous formula of Johansson and the author. These formulas are derived as the scaling limit of corresponding multi-time formulas for geometric last passage percolation.
\end{abstract}

\section{Introduction} \label{sec:intro}

\subsection{Local random growth}
We study a model of local random growth describing growing interfaces in the (1+1)-dimensional spacetime plane.
It forms a basic model of non-equilibrium statistical mechanics, exhibiting novel scaling exponents and fluctuations, and a conjectural promise of universality.
Part of what makes such models interesting is that they lie at the crosswords of many fields: algebraic combinatorics, asymptotic representation theory, differential equations, integrable probability, interacting particle systems, random geometry, random permutations, random matrices and random tilings.
See the surveys \cite{BoGo, CoKPZ, Josurvey, QuKPZ, Tak, Zyg} and references therein.

The model is geometric last passage percolation or discrete polynuclear growth (PNG).
Consider parameters $a_i, b_j \in [0,1]$ for $i,j \geq 1$ such that $0 < a_ib_j < 1$ for every $(i,j)$.
Decorate the first quadrant of $\Z^2$ with independent, random weights $\omega(i,j)$ having the law
$\mathrm{Geom}(a_i b_j)$, that is,
$$ \pr{\omega_{i,j} = k} = (1- a_ib_j) (a_i b_j)^k \quad k = 0,1,2,\ldots.$$
The growth function associated to this random environment is
\begin{equation} \label{eqn:G} \Gb(m,n) = \max \, \{ \Gb(m-1,n) , \Gb(m,n-1)\} + \omega(m,n).\end{equation}
The boundary conditions are zero: $\Gb(m,0) \equiv 0 \equiv \Gb(0,n)$.
This is a model of local random growth -- growth because $\Gb(m,n)$ is increasing in both components,
and local because the growth rule depends on the two neighbouring values at $(m-1,n)$ and $(m,n-1)$.

Unwrapping the recursive definition \eqref{eqn:G} leads to the expression
$$\Gb(m,n) = \max_{\pi} \, \sum_{(i,j) \in \pi} \omega(i,j)$$
where the maximum is over all up/right lattice paths $\pi$ from $(1,1)$ to $(m,n)$.
Up/right means that the paths move in the direction (0,1) or (1,0) at each step.
In this way $\Gb(m,n)$ represents the last passage time among directed paths from $(1,1)$ to $(m,n)$
with respect to the weights $\omega(i,j)$.
This perspective leads to the more general definition of point-to-point last passage values:
\begin{equation} \label{eqn:GmnMN} \Gb((m,n) \to (M,N)) = \max_{\pi} \, \sum_{(i,j) \in \pi} \omega(i,j)\end{equation}
where the maximization is over all up/right paths from $(m,n)$ to $(M,N)$.

One may also consider a point-to-line last passage problem.
The ``line" is introduced as a boundary condition for the process $\Gb$.
Namely, set $\Gb(0,n) = x_n$ for $n \geq 1$ where $x_1 \leq x_2 \leq x_3 \leq \cdots$ are integers.
Then define
\begin{equation} \label{eqn:Gx} \Gb(m,n \mid x) = \max_k x_k + \Gb((0,k) \to (m,n)).\end{equation}

Two scaling limits arise naturally from these definitions.
The first is the limit of the point-to-point process as a four-parameter family: $(m,n; M,N) \mapsto \Gb((n,m) \to (M,N))$.
The second is the limit of the point-to-line process for various choices of the line: $(m,n) \mapsto \Gb(m,n \mid x)$.
Both these scaling limits have been found, leading to a breakthrough in the study of random growth models and the universality of their fluctuations.
Universality here refers to the idea that scaling exponents and limiting fluctuations should not be model dependent.
PNG is part of a whole host of models that comprise the Kardar-Parisi-Zhang (KPZ) universality class, named for the seminal work of Kardar, Parisi and Zhang \cite{KPZ}.
The aforementioned scaling limits (which we will elaborate below) have a rich structure as spacetime objects, and the purpose of this article is to understand the temporal law of these limit objects.

\subsection{The directed landscape}
The scaling limit of the point-to-point last passage process, $\Gb((m,n) \to (M,N))$, is a random geometry called the directed landscape.
Introduced by Dauvergne, Ortmann and Vir\'{a}g \cite{DOV}, the directed landscape 
$$ \mathcal{L} : \R_{\uparrow}^4 = \{(x,s;y,t) \in \R^4: s < t\} \to \R$$
is a random continuous function such that $\mathcal{L}(x,s;y,t)$ is viewed as a distance from $(x,s)$ to $(y,t)$.
It admits geometric concepts such as length, geodesics, bisectors, Busemann functions, fractal dimensions, and more.
See the articles \cite{BB, BasGH, BatGH,Bh,BSS,DVS,Dau,GZ,RV1,RV2} and references therein.

From the perspective of random geometry, $\Gb((m,n) \to (M,N))$ is a kind of random metric.
The scaling under which $\Gb$ converges to $\mathcal{L}$ goes as follows \cite{DV}. Let $a_i = b_j = \sqrt{q}$ for every $i,j$. There are $q$-dependent constants $c_1, c_2, c_3$ such that as $T$ tends to infinity,
$$\frac{\Gb\left ((Ts - c_1 T^{2/3}x, Ts + c_1 T^{2/3}x) \to ( Tt - c_1 T^{2/3}y, Tt + c_1 T^{2/3}y)\right) - c_2T(t-s)}{c_3 T^{1/3}} \to \mathcal{L}(x,s;y,t)$$
in law, in the topology of uniform convergence over compact subsets of $\R_{\uparrow}^4$.

\subsection{The KPZ fixed point}
The scaling limit of the point-to-line last passage process is a scale invariant Markov process with state space consisting of upper semicontinuos functions $h : \R \to \R$. It describes a randomly growing interface
$$ \mathbold{h}(x,t; h_0) : \{(x,t) \in \R^2: t > 0\} \to \R$$
called the KPZ fixed point with initial condition $h_0$. It was introduced by Matetski, Quastel and Remenik \cite{MQR}. The process is Markovian in time, and its fixed time law enjoys a degree of integrability: the finite dimensional distributions of $x \mapsto \mathbold{h}(x,t;h_0)$ are given by a Fredholm determinant. See also \cite{QR, MQR2} for more on the integrability of these laws.

The relation between the KPZ fixed point and the directed landscape is the continuum analogue of \eqref{eqn:Gx} and akin to the Hopf-Lax formula for Hamilton-Jacobi equations:
\begin{equation} \label{eqn:var}
    \mathbold{h}(x,t;h_0) = \sup_{y \in \R}\, \{ h_0(y) + \mathcal{L}(y,0;x,t)\}.
\end{equation}
See \cite{NQR, DV, RV1} for proofs of this variational formula.

To emphasize the point-to-line nature of the KPZ fixed point, let us denote
\begin{equation} \label{eqn:L} \mathcal{L}(h_0; x,t) := \mathbold{h}(x,t;h_0).\end{equation}

\subsection{Temporal laws} \label{sec:temporal}
We study the temporal law of the KPZ fixed point for two particular initial conditions. They are the droplet or narrow wedge: $h_0(y) = -\infty \ind{y \neq 0}$, and the stationary or Brownian: $h_0(y) = B(y)$, a two-sided Brownian motion with diffusivity constant 2. For the droplet, this process is simply $(x,t) \mapsto \mathcal{L}(0,0;x,t)$. For the Brownian, its time evolution is stationary, meaning $\mathcal{L}(B; x,t) - \mathcal{L}(B; 0,t) \stackrel{d}{=} B(x)$ \cite{Pim}, but there is a non-trivial coupling in the process $t \mapsto \mathcal{L}(B;0,t)$. The KPZ fixed point even enjoys an ergodicity property: if $h_0$ has asymptotic drift zero, then $\mathcal{L}(h_0; (x,t)) - \mathcal{L}(h_0; (0,t))$ converges in law to $B$ in the large time limit (in fact, this convergence holds in a stronger sense; see Theorem 1 of \cite{Pim} and Theorem 2.1 of \cite{BSS}).

We shall study the so-called multi-time law of $\mathcal{L}(0,0;x,t)$ and the two-time law of $\mathcal{L}(B;(x,t))$.
Temporal laws are important because an early motivation for studying random growth models was to understand their time evolution.
We quote the authors from \cite{MQR}.

\begin{quote}
In modelling, for example, edges of bacterial colonies, forest fires, or spread of genes,
the non-linearities or noise are often not weak, and it is really the fixed point that should be used in
approximations and not the KPZ equation. However, progress has been hampered by a complete lack
of understanding of the time evolution of the fixed point itself. Essentially all one had was fixed time
distributions of a few special self-similar solutions, the Airy processes.
\begin{flushright}
    \footnotesize{Matetski, Quastel and Remenik -- in relation to the KPZ fixed point}
  \end{flushright}
\end{quote}

The temporal law of the KPZ fixed point has been studied recently. Works of Baik and Liu \cite{BL1,BL2} discovered multi-time formulas for periodic tasep (see also \cite{Liao}), which led to Liu \cite{Liu} deriving the multi-time distributions of $\mathcal{L}(0,0;x,t)$ and $\mathcal{L}(h_0\equiv 0; x,t)$. Johansson discovered the two-time distribution for both geometric PNG and Brownian last passage percolation \cite{JoTwo1, JoTwo2}, which led to formulas for the CDF of the pair $(\mathcal{L}(0,0;x_1,t_1), \mathcal{L}(0,0;x_2,t_2)$. Johansson and the author found a formula for the multi-time law of geometric PNG \cite{JR1}, which led to a formula for the multi-time law of $\mathcal{L}(0,0;x,t)$.

In this article we present a formula for the two-time law of the stationary KPZ fixed point $\mathcal{L}(B; x,t)$.
The single time law of $\mathcal{L}(B;x,t)$ is the Baik-Rains distribution, a formula for which is given in \cite{BR} as well as in \cite{FS}.
Furthermore, \cite{BFP} derives a formula for the spatial law $x \mapsto \mathcal{L}(B;x,t)$.
The other main result is a relatively simpler formula than in \cite{JR1} for the multi-time law of $\mathcal{L}(0,0;x,t)$.
It reduces the formula from \cite{JR1} that involves 7 families of integral kernels to 5 such families. The main idea is
to employ a different conjugation factor in the determinant formula, which makes the limit transition from geometric PNG to the KPZ fixed point simpler.

\subsection{Main results} \label{sec:mainresults}

\subsubsection{Two-time law for stationary random growth}
Let $B(y)$ be a two-sided Brownian motion with diffusivity constant 2 ($\E{B(y)^2} = 2|y|$).
Consider the KPZ fixed point started from $B$:
$$ \mathcal{L}(B; x,t) = \sup_{y \in \R} \{ B(y) + \mathcal{L}(0,y;x,t)\}.$$
One may also add drift to the Brownian motion, to get (by the affine invariance of the KPZ fixed point):
$$ \mathcal{L}(B+ay; x,t) \stackrel{d}{=} \mathcal{L}(B; x + \frac{a}{2}t, t ) + ax + \frac{a^2}{4}t.$$
We shall present a formula for the joint law of $\mathcal{L}(B; x_1,t_1)$ and $\mathcal{L}(B;x_2,t_2)$ for $t_1 < t_2$.

Define the function:
\begin{equation} \label{eqn:Gtxx2} \G(z \vt t, x, \xi) = \exp \left \{ \frac{t}{3}z^3 + t^{2/3}x w^2 - t^{1/3} \xi z \right \}.\end{equation}

Fix $0 < t_1 < t_2$, $x_1, x_2 \in \R$ and $\xi_1, \xi_2 \in \R$. Set
\begin{equation} \label{eqn:deltas}
\D t = t_2 - t_1, \quad \D x = (\frac{t_2}{\D t})^{2/3} x_2 - (\frac{t_1}{\D t})^{2/3}x_1, \quad \D \xi = (\frac{t_2}{\D t})^{1/3} \xi_2 - (\frac{t_1}{\D t})^{1/3}\xi_1.
\end{equation}
Let $\Gamma_d$ be a vertical contour, oriented upwards, with real part $d \in \R$.

\begin{defn} \label{def:A}
    Consider the following four integral kernels acting on $L^2(\R)$.
    \begin{align*}
        A_1(u,v) &= e^{\mu(v-u)} \ind{v \leq 0}\, \frac{1}{(2\pi \mathbold{i})^2}\oint \limits_{\Gamma_{-d_1}} d\zeta \oint \limits_{\Gamma_{D_1}} dz\,
        \frac{\G(z \vt t_1,x_1,\xi_1) e^{zv-\zeta u}}{\G(\zeta \vt t_1, x_1, \xi_1) (z-\zeta)}.\\
        A_2(u,v) &= e^{\mu(v-u)} \, \ind{u > 0}\,  \frac{1}{(2\pi \mathbold{i})^2}\oint \limits_{\Gamma_{-d_1}} d \zeta \oint \limits_{\Gamma_{D_1}} dz\, 
        \frac{\G(z \vt \D t, \D x, \D \xi) e^{\zeta v - zu}}{\G(\zeta \vt \D t, \D x, \D \xi) (z-\zeta)}. \\
        A_3(u,v) &= e^{\mu(v-u)} \, \frac{1}{(2\pi \mathbold{i})^4} \oint \limits_{\Gamma_{-d_1}} d\zeta \oint \limits_{\Gamma_{-d_2}} d\omega \oint \limits_{\Gamma_{D_1}} dz \oint \limits_{\Gamma_{D_2}} dw\, \\
        & \frac{\G(z \vt t_1,x_1,\xi_1) \G(w \vt \D t, \D x, \D \xi)e^{\omega v-\zeta u}}{\G(\zeta \vt t_1, x_1, \xi_1) \G(\omega \vt \D t, \D x, \D \xi)(z-\zeta)(w-\omega)(z-w)}.\\
        A_4(u,v) &= e^{\mu(v-u)} \, \frac{1}{(2\pi \mathbold{i})^4} \oint \limits_{\Gamma_{-d_1}} d\zeta \oint \limits_{\Gamma_{-d_2}} d\omega \oint \limits_{\Gamma_{D_1}} dz \oint \limits_{\Gamma_{D_2}} dw\, \\
        & \frac{\G(z \vt t_1,x_1,\xi_1) \G(w \vt \D t, \D x, \D \xi)e^{\omega v-\zeta u}}{\G(\zeta \vt t_1, x_1, \xi_1) \G(\omega \vt \D t, \D x, \D \xi)(z-\zeta)(w-\omega)(z-w)}.
    \end{align*}
    Here $d_1,d_2,D_1,D_2 > 0$. In the formula for $A_3$, $D_1 < D_2$, and for $A_4$, $D_1 > D_2$. The parameter $\mu$ is a sufficiently large constant in terms of $t_i,x_i$ (it suffices to have $\mu > \max \{x_1t_1^{-1/3}, \D x(\D t)^{-1/3}\}$).
\end{defn}
The kernels $A_i$ are all trace class for sufficiently large $\mu$ because each can be written as a product of two Hilbert-Schmidt kernels.

For $\theta \in \C \setminus \{0\}$, define the kernel
\begin{equation} \label{eqn:Atheta}
[A(\theta)](u,v) = \theta^{\ind{u > 0}}(A_2 - A_1 + A_3)(u,v) + \theta^{-\ind{u \leq 0}}(A_1 - A_2 + A_4)(u,v).
\end{equation}
Define the quantity
\begin{equation} \label{eqn:Ttheta}
    T(\theta) = t_1^{1/3} \xi_1 + \int_{-\infty}^{\infty}du \int_{-\infty}^0 dv \, \theta^{\ind{u > 0}}[(I + A(\theta))^{-1}\cdot A(\theta)](u,v) e^{\mu(v-u)}
\end{equation}
The quantity $\mu$ is from Definition \ref{def:A}.
We shall see that $T(\theta)$ is a well-defined and finite quantity for all $\theta$ outside a discrete subset of $\C$.
In particular, there is a choice of $r > 1$ such that $T(\theta)$ is well-defined and bounded over all $|\theta| = r$.

\begin{thm} \label{thm:stat}
    Let $0 < t_1 < t_2$, $x_1, x_2 \in \R$ and $\xi_1, \xi_2 \in \R$. Define the function
    $$ F_0(t_1,x_1,\xi_1; t_2, x_2,\xi_2) = \frac{1}{2 \pi \mathbold{i}} \oint \limits_{|\theta|=r}d\theta\, 
    \frac{(\partial_{\xi_1} + \partial_{\xi_2}) \dt{I + A(\theta)}_{L^2(\R)} \cdot T(\theta)}{\theta - 1}.$$
    Here $r > 1$ can be chosen such that all quantities are well-defined.

    Let $B$ be a Brownian motion with diffusivity constant 2. Then,
    \begin{equation} \label{eqn:Fstat}
    \Pr(\mathcal{L}(B; t_1,x_1) \leq \xi_1, \mathcal{L}(B; t_2,x_2) \leq \xi_2) = F_0(t_1,x_1,\xi_1; t_2, x_2, \xi_2).
    \end{equation}
\end{thm}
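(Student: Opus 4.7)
The plan is to obtain \eqref{eqn:Fstat} as the KPZ scaling limit of a two-time distributional formula for geometric last passage percolation (LPP) with random boundary sources on the two coordinate axes. The prelimit model takes the bulk weights as $\mathrm{Geom}(q)$ and introduces boundary parameters $a_1=\alpha,\,b_1=\beta$, with $a_i=b_j=\sqrt{q}$ for $i,j\ge 2$, so that the first row and column carry independent $\mathrm{Geom}(\alpha\sqrt{q})$ and $\mathrm{Geom}(\beta\sqrt{q})$ weights respectively. The resulting boundary-driven LPP $\Gb^{\alpha,\beta}(m,n)$ has stationary increments when $\alpha=\beta$, and with $\alpha=\beta \to 1$ at rate $T^{-1/3}$ under the standard $1\!:\!2\!:\!3$ scaling it converges to $\mathcal{L}(B;x,t)$.

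Second, I would derive a two-time Fredholm-determinant formula for $\Gb^{\alpha,\beta}$ starting from the $p=2$ case of the geometric PNG formula in \cite{JR1} (a prelimit of Theorem \ref{thm:droplet}) and inserting the boundary weights through their geometric Laplace transform. Because the boundary weights have product form, their effect is to shift the poles and add new residue terms in the contour integrals comprising the kernels of Definition \ref{def:F}. After symmetrizing in $\alpha \leftrightarrow \beta$, I expect the prelimit kernel acting on $L^2(\R_{<0}) \oplus L^2(\R_{>0}) \cong L^2(\R)$ to take the form
$$A^{(q)}(\theta) = \theta^{\ind{u>0}}\bigl(A_2^{(q)} - A_1^{(q)} + A_3^{(q)}\bigr) + \theta^{-\ind{u\le 0}}\bigl(A_1^{(q)} - A_2^{(q)} + A_4^{(q)}\bigr),$$
the two orderings $D_1<D_2$ vs.\ $D_1>D_2$ of the $z$-contours in the $A_3$ and $A_4$ blocks reflecting the two residue conventions that appear when one swaps the source contours at $(0,0)$, matching Definition \ref{def:A}.

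Third, I would implement the Baik-Rains/Ferrari-Spohn derivative identity: the stationary distribution is extracted from $\Gb^{\alpha,\beta}$ by differentiating in the boundary parameters $\alpha,\beta$ at the stationary point; in the scaling limit this becomes $(\partial_{\xi_1}+\partial_{\xi_2})$ acting on the droplet two-time CDF $\dt{I+A(\theta)}_{L^2(\R)}$, with a residual boundary term. That residual term is the origin of $T(\theta)$: it is the scaling limit of the inner product of $(I+A^{(q)}(\theta))^{-1}A^{(q)}(\theta)$ with the boundary-source vector coming from the perturbative differentiation, which in the limit becomes the double integral in \eqref{eqn:Ttheta} weighted by $e^{\mu(v-u)}$. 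The deterministic shift $t_1^{1/3}\xi_1$ inside $T(\theta)$ arises from the drift generated by the parameter rescaling.

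The hard part will be the scaling-limit analysis. One must establish trace-norm convergence of $A^{(q)}(\theta) \to A(\theta)$ on $L^2(\R)$ uniformly in $\theta$ on a circle $|\theta|=r>1$, together with the associated convergence of the resolvents and of $T^{(q)}(\theta) \to T(\theta)$. The Brownian source makes this more delicate than in the droplet case: the $A_i$ are not trace-class absent the conjugating weight $e^{\mu(v-u)}$, and $\mu$ must be chosen large enough to dominate the linear terms in the $\G$-factors at each time. Verifying that $\mu > \max\{x_1 t_1^{-1/3},\, \D x\,(\D t)^{-1/3}\}$ suffices to make each $A_i$ uniformly Hilbert-Schmidt, and that $\dt{I+A(\theta)}$ vanishes only on a discrete subset of $\C$ so that $T(\theta)$ is well defined on the integration contour, will constitute the technical heart of the proof.
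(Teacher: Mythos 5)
Your overall architecture does match the paper's: a two-time formula for geometric LPP with boundary sources (the paper's Theorem \ref{thm:0}, from \cite{JR2}), a KPZ scaling limit of that formula (Theorem \ref{thm:1}), a shift/derivative identity in the style of \cite{FS} and \cite{BFP}, and the identification of $T(\theta)$ with a resolvent inner product. But two concrete gaps remain. First, the order of limits. The paper keeps $\alpha,\beta>0$ \emph{fixed} through the $T\to\infty$ limit, with $a_1=1-\alpha c_3^{-1}T^{-1/3}$, $b_1=1-\beta c_3^{-1}T^{-1/3}$, and only afterwards sends $\alpha,\beta\to0$. Your coupling ``$\alpha=\beta\to1$ at rate $T^{-1/3}$'' would sit at the critical point where the limiting kernels acquire factors $(\zeta+\beta)^{-1}$, $(z-\alpha)^{-1}$ whose poles pinch the contours (the contours need $d_1<\beta$, $D_1<\alpha$), so the formula degenerates; the regularization by positive $\alpha,\beta$ is essential. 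Relatedly, the passage $\alpha,\beta\to0$ is a genuine $0\cdot\infty$ computation: $\dt{I+F(\theta)}$ itself vanishes linearly in $\alpha+\beta$, and the finite limit of $\dt{I+F(\theta)}/(\alpha+\beta)$ is extracted from the rank-one decomposition $F(\theta)=A(\theta)+E(\theta)-(\alpha+\beta)R(\theta)$ together with $\mathrm{Tr}\,R(\theta)=\G(\alpha\vt t_1,x_1,\xi_1)\G(\beta\vt t_1,-x_1,\xi_1)/(\alpha+\beta)$. The constant $t_1^{1/3}\xi_1$ in $T(\theta)$ is precisely $\lim_{\alpha,\beta\to0}\bigl(1-\G(\alpha\vt t_1,x_1,\xi_1)\G(\beta\vt t_1,-x_1,\xi_1)\bigr)/(\alpha+\beta)$, not a ``drift from the parameter rescaling''. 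This cancellation, plus uniform invertibility of $I+A(\theta)$ on $|\theta|=r$ off a discrete set of bad radii, is the technical heart of the argument — more so than the trace-norm convergence $A^{(q)}\to A$ you single out, which the paper disposes of at the level of Theorem \ref{thm:1}.

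Second, you omit the surviving corner weight. With the paper's parametrization, $\omega(1,1)$ converges to an independent $\mathrm{Exp}(\alpha+\beta)$ variable $\omega$, so the scaling limit of the prelimit formula gives the law of $(X_1+\omega,X_2+\omega)$ with $X_i=\mathcal{L}(h_0;x_i,t_i)$ and $h_0(y)=B(y)-\alpha(y)_{-}-\beta(y)_{+}$, not the law of $(X_1,X_2)$. The operator $1+\frac{1}{\alpha+\beta}(\partial_{\xi_1}+\partial_{\xi_2})$ is exactly the deconvolution of this exponential weight (the shift argument), and justifying it in the continuum requires a decay estimate of the form $P^{+}(\xi_1,\xi_2)\le Ce^{-\kappa_1(\xi_1)_{-}^2-\kappa_2(\xi_2)_{-}^2}$ (the paper's Lemma \ref{lem:Pdecay}), because unlike exponential LPP the limiting heights are not bounded below and the Laplace-transform manipulation is otherwise unjustified. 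Your plan as written does not account for either of these points.
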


The kernel $A(\theta)$ is in fact the kernel $F(\theta)$ of Theorem \ref{thm:droplet} below in the case $p=2$.
It was introduced in \cite{JoTwo2} for the two-time distribution in KPZ random growth.

It would be interesting to study special cases of the two-time law $F_0$.
For instance, one could look at the short and long time separation limits ($t_2/t_1 \to 1$ and $t_2/t_1 \to \infty$).
For the droplet geometry, this was studied by Johansson in \cite{JoTwospecial}.
One can derive the single time law (Baik-Rains distribution) by taking the limit $\xi_2 \to +\infty$.
In this limit, the only kernel that survives is $A_1$, and one finds the Baik-Rains distribution in the form
derived by Ferrari-Spohn \cite{FS} (see also \cite{BFP}).

\subsubsection{Simplified multi-time law for the droplet}
To present this result we need to introduce notation, most of it directly from \cite{JR1}.

Consider times $0<t_1<t_2 < \cdots < t_p$, points $x_1,x_2, \ldots x_p \in\R$ and $\xi_1,\xi_2, \ldots, \xi_p \in \R$.

\paragraph{\textbf{Delta notation}}
For integers $0 \leq k_1 < k_2 \leq p$, define
\begin{align}\label{deltanotation}
\D_{k_1, k_2} t &= t_{k_2} - t_{k_1} \quad \text{and}\;\; \D_{k} t= t_k - t_{k-1}, \\ \nonumber
\D_{k_1, k_2} x & = x_{k_2} \, \Big (\frac{t_{k_2}}{\D_{k_1, k_2}t} \Big)^{\frac{2}{3}} - x_{k_1} \, \Big(\frac{t_{k_1}}{\D_{k_1, k_2}t} \Big)^{\frac{2}{3}}
\quad \text{and}\;\; \D_k x = \D_{k-1,k} x\, , \\ \nonumber
\D_{k_1, k_2} \xi & = \xi_{k_2} \, \Big (\frac{t_{k_2}}{\D_{k_1, k_2}t} \Big)^{\frac{1}{3}} - \xi_{k_1} \, \Big(\frac{t_{k_1}}{\D_{k_1, k_2}t} \Big)^{\frac{1}{3}}
\quad \text{and}\;\;  \D_k \xi = \D_{k-1,k} \xi \, .
\end{align}
By convention, $y_0 = 0$ for $y = t,x,\xi$. We will also use the shorthand
\begin{equation*}
\D_{k_1,k_2} (y^1, \ldots, y^{\ell}) = (\D_{k_1,k_2}y^1, \ldots, \D_{k_1,k_2}y^{\ell}) \quad \text{and}\;\;
\D_k (y^1, \ldots, y^{\ell}) = (\D_k y^1, \ldots, \D_k y^{\ell}).
\end{equation*}

\paragraph{\textbf{Theta factors}}
For $\vec{\eps} = (\eps_1, \ldots, \eps_{p-1}) \in \{1,2\}^{p-1}$ and
$\theta = (\theta_1, \ldots, \theta_{p-1}) \in (\C \setminus 0)^{p-1}$, define the following quantities.
\begin{align} \label{thetaeps}
	\theta(r \vt \veps) &= \prod_{k=1}^{r-1} \theta_k^{2-\eps_k} \, \prod_{k=r}^{p-1} \theta_k^{1-\eps_k} \quad \text{for}\;\; 1 \leq r \leq p.
\end{align}
Notable $\veps$ will be
$$ \eps^{k} = (\overbrace{2, \ldots,2}^{k-1}, 1, \ldots, 1) \quad \text{for}\;\; 1 \leq k \leq p.$$
For these, define
\begin{equation} \label{theta}
\Theta(r \vt k) = \theta(r \vt \eps^k) - (1- \ind{r=p, k=p-2}) \cdot \theta(r \vt \eps^{k+1}) \quad \text{for}\;\; 1 \leq k < \min \{r,p-1\}, \;\; 1 \leq r \leq p.
\end{equation}
Set $\Theta(r \vt k)$ to be zero otherwise. Set also
$$ (-1)^{\eps_{[k_1,k_2]}} = (-1)^{\sum_{k = \max \{1,k_1\}}^{\min \{k_2,p-1\}} \, \eps_k} \quad \text{for}\;\; 0 \leq k_1 < k_2 \leq p.$$
It will be convenient to write  $(-1)^{\eps_{[k_1,k_2]}} \cdot (-1)^x$ as $(-1)^{\eps_{[k_1, k_2]}+x}$.

\paragraph{\textbf{Contour notation}}
We will denote the contour integral
$$ \frac{1}{2 \pi \, \mathbold{i}} \int\limits_{\gamma} dz \quad  \text{as} \quad \oint\limits_{\gamma} dz\,.$$
There will be two types of contours in our calculations: circles and vertical lines.
Throughout, $\gamma_r$ denotes a circular contour around the origin of radius $r > 0$
with counterclockwise orientation. Also, $\gamma_r(1)$ is such a circular contour
around 1. A vertical contour through $d \in \R$ oriented upwards is denoted $\Gamma_d$.

For $p \geq 1$ consider the Hilbert space
$$ \hb = \underbrace{L^2(\R_{<0}) \oplus \cdots \oplus L^2(\R_{<0})}_{p-1} \oplus \, L^2(\R_{>0}).$$
A kernel $F$ on $\hb$ has a $p \times p$ block structure, and we denote by $F(r,u; s,v)$ its $(r,s)$-block. So
$$ F(u,v) = \begin{bsmallmatrix}
F(1,u;\, 1,v)& \cdots&  F(1,u;\,p,v)\\
\vdots &  & \vdots \\
F(p,u;\,1,v)& \cdots  & F(p,u;\,p,v)
\end{bsmallmatrix}_{p \times p}\,. $$

Define the function
\begin{equation} \label{Gtxx}
\G(w \vt t, x, \xi) = \exp \Big \{ \, \frac{t}{3} w^3 + t^{\frac{2}{3}} xw^2 - t^{\frac{1}{3}} \xi w \, \Big \}
\quad \text{for}\; w \in \C \;\text{and}\; t,x,\xi \in \R.
\end{equation}

Introduce the notation
\begin{align} \label{blocknot}
r^{*} &= \min \{r,p-1\} \;\;\; \text{if}\;\; 1 \leq r \leq p. 
\end{align}

\begin{defn} \label{def:F}
The following basic matrix kernels over $\hb$ will constitute a final kernel.

(0) Let $d > 0$. Define
\begin{align*}
    &F[{\scriptstyle 0}](r,u;s,v) = \ind{s < r^*} \oint\limits_{\Gamma_{-d}} d \zeta\, \frac{e^{\zeta(v-u)}}{\G(\zeta \vt \D_{s,r^*}(t,x,\xi))}.
\end{align*}
Recall $\Gamma_{d}$ is a vertical contour oriented upwards that intersects the real axis at $d$.

(1) Let $d_1 > 0$ and $D > 0$. Define
\begin{align*}
& F[{\scriptstyle p \vt p}]\, (r,u;s,v) = \ind{r=p} \, \oint\limits_{\Gamma_{-d_1}} d \zeta_1 \oint\limits_{\Gamma_{D}} d z_p\,
\frac{\G \big(z_p \vt \D_p(t,x,\xi) \big) \, e^{\zeta_1 v - z_p u}}{\G \big( \zeta_1 \vt \D_{s^{*},p}(t,x,\xi)\big) \, (z_p-\zeta_1)}.
\end{align*}

(2) Let $0 < d_1 < d_2$. For $1 \leq k \leq p$, define
\begin{align*}
& F[{\scriptstyle k,k \vt \emptyset}]\, (r,u; s,v)	=  \ind{ s < k < r^{*}} \, 
\oint\limits_{\Gamma_{-d_1}} d\zeta_1 \oint\limits_{\Gamma_{-d_2}} d\zeta_2 \\
& \frac{(\zeta_1 - \zeta_2)^{-1} \, e^{\zeta_2 v - \zeta_1 u}}
{\G \big(\zeta_1 \vt \D_{k,r^{*}}(t,x,\xi)\big) \, \G \big (\zeta_2 \vt \D_{s,k}(t,x,\xi) \big)}\,.
\end{align*}

The next two kernels are given in terms of integer parameters $0 \leq k_1 < k_2 \leq p$ and a vector parameter $\veps = (\eps_1, \ldots, \eps_{p-1}) \in \{1,2\}^{p-1}$.
Given $k_1$, $k_2$ and $\veps$, consider any set of distinct positive real numbers $D_k$ for integers $k \in (k_1, k_2]$ that
satisfy the following pairwise ordering for every $k \in (k_1,k_2)$:
\begin{equation} \label{Dorder}
D_k < D_{k+1}\;\; \text{if}\;\; \eps_k = 1\;\; \text{while}\;\; D_k > D_{k+1}\;\; \text{if}\;\; \eps_k = 2.
\end{equation}
It is easy to see that it is always possible to order distinct real numbers such that they satisfy
these constraints imposed by $\veps$. An explicit choice would be
$$D_1 = 2^p \quad \text{and} \quad D_{k+1} = D_k + (-1)^{\eps_k+1} \, 2^k.$$
Denote the contour
$$\vec{\Gamma}_{D^{\veps}} = \Gamma_{D_{k_1+1}} \times \cdots \times \Gamma_{D_{k_2}}.$$
(3)  Let $d_1 > 0$. Define
\begin{align*}
& F^{\veps}[{\scriptstyle k_1 \vt (k_1, k_2]}]\, (r,u; s,v) = \ind{k_1 < r^{*},\, s=k_2 < p,\, k_1<k_2}\,
\oint\limits_{\Gamma_{-d_1}} d\zeta_1 \, \oint\limits_{\vec{\Gamma}_{D^{\veps}}} dz_{k_1+1} \cdots dz_{k_2} \\
& \frac{\prod_{k_1 < k \leq k_2} \G \big(z_k \vt \D_k (t,x,\xi) \big)\, \prod_{k_1 < k < k_2} (z_k - z_{k+1})^{-1}\, e^{z_{k_2} v - \zeta_1 u}}
{\G \big(\zeta_1 \vt \D_{k_1,r^{*}} (t,x,\xi) \big) (z_{k_1+1}- \zeta_1)}.
\end{align*}

(4) Let $d_1, d_2 > 0$. Define
\begin{align*}
& F^{\veps}[{\scriptstyle k_1,k_2 \vt (k_1, k_2]}]\, (r,u;s,v) = \ind{k_1 < r^{*}, \, s^{*} < k_2,\, k_1<k_2}\,
\oint\limits_{\Gamma_{-d_1}} d\zeta_1 \oint\limits_{\Gamma_{-d_2}} d\zeta_2 \, \oint\limits_{\vec{\Gamma}_{D^{\veps}}} dz_{k_1+1} \cdots dz_{k_2} \\
& \frac{\prod_{k_1 < k \leq k_2} \G \big(z_k \vt \D_k (t,x,\xi)\big )\, \prod_{k_1 < k < k_2} (z_k - z_{k+1})^{-1}\, e^{\zeta_2 v - \zeta_1 u}}
{\G \big(\zeta_1 \vt \D_{k_1,r^{*}} (t,x,\xi) \big) \, \G \big(\zeta_2 \vt \D_{s^{*},k_2}(t,x,\xi) \big)\, (z_{k_1+1}-\zeta_1)\, (z_{k_2}-\zeta_2)}.
\end{align*}
\end{defn}

Using these kernels, consider the following kernels obtained as weighted sums.
Let $\theta_1, \ldots, \theta_{p-1}$ be
non-zero complex numbers and $\mathbold{\theta} = (\theta_1,\ldots, \theta_{p-1})$. Recall $\theta(r \vt \veps)$
and $\Theta(r \vt k)$ from \eqref{thetaeps} and \eqref{theta}, respectively. Define the following kernels over $\hb$.

\begin{align*}
F^{(0)}(r,u; s,v) & = (1+ \Theta(r \vt s)) \cdot F[{\scriptstyle 0}]\, (r,u;s,v).\\
F^{(1)}(r,u; s,v) & = \sum_{1 \leq k \leq p} \Theta(r \vt k)\cdot F[{\scriptstyle k,k \vt \emptyset}]\, (r,u;s,v).
\end{align*}
The final kernel involves variables $k_1, k_2 \in \{0,\ldots, p\}$ and $\veps \in \{1,2\}^{p-1}$. They satisfy
\begin{equation} \label{sumcond}
k_1 < k_2; \quad \text{given}\;\; k_1, k_2,\; \veps = (\overbrace{2, \ldots, 2}^{\substack{\eps_i = 2 \;\text{if} \\ i < \max \{k_1,1\}}},
\overbrace{\eps_{\max \{k_1,1\}}, \ldots, \eps_{\min\{k_2,p-1\}}}^{\text{arbitrary 1 or 2}},
\overbrace{1,\ldots, 1}^{\substack{\eps_i = 1 \;\text{if} \\ i > \min\{k_2,p-1\}}}).
\end{equation}
Recall the notation $(-1)^{\eps_{[k_1,k_2]}}$ following \eqref{theta}. Define
\begin{align*}
& F^{(2)}(r,u;s,v)  = \sum_{\substack{k_1, k_2,\, \veps \\ \text{satisfies}\; \eqref{sumcond}}}
(-1)^{\eps_{[k_1,k_2]} + \ind{k_2 =p}} \cdot \theta(r \vt \veps) \, \times \\
& \quad  \Big [  F^{\veps}[{\scriptstyle k_1 \vt (k_1,k_2]}] + F^{\veps}[{\scriptstyle k_1,k_2 \vt (k_1,k_2]}] +
\ind{k_1=p-1,\,k_2=p} F[{\scriptstyle p \vt p}] \Big ] (r,u;s,v). 
\end{align*}

Finally, define the kernel
\begin{equation} \label{F}
F(\mathbold{\theta}) = F^{(0)} + F^{(1)} + F^{(2)}.
\end{equation}

\paragraph{\textbf{Conjugation factor}}
We need to surround the kernel $F(\mathbold{\theta})$ with a conjugation to ensure that its entries decay rapidly; the series expansion of its Fredholm determinant will then be convergent absolutely. Define the following multiplication operator $\Upsilon$ on $\hb$.
\begin{equation} \label{eqn:conj}
    (\Upsilon f)(r,u) = e^{ \kappa_r |u| + \mu \cdot \mathrm{sgn}(u) |u|^{3/2}} \cdot f(r,u), \quad 1 \leq r \leq p; \; u \in \R.
\end{equation}
The numbers $\kappa_r$ need to be increasing: $\kappa_r < \kappa_s$ when $r < s$. For instance, $\kappa_r = r$ would do.
The number $\mu > 0$ is a sufficient small constant whose value will be stipulated during the proof (essentially $0 < \mu < (2/3)t_p^{-1/2}$ would do to balance the decay rate of the Airy function).

\begin{thm} \label{thm:droplet}
Suppose $p \geq 2$. Let $0 < t_1 < t_2 < \cdots < t_p$, $x_1, \cdots, x_k \in \R$ and $\xi_1, \cdots, \xi_p \in \R$.
Then,
\begin{equation*}
    \pr{\mathcal{L}(0,0;x_i,t_i) \leq \xi_i; 1 \leq i \leq p} =
\oint\limits_{\gamma_r} d\theta_1 \cdots \oint\limits_{\gamma_r} d\theta_{p-1}\, \frac{\dt{I + \Upsilon^{-1} F(\mathbold{\theta})\Upsilon}_{\hb}}{\prod_k (\theta_k - 1)}
\end{equation*}
where $\gamma_r$ is a counter-clockwise circular contour around the origin of radius $r > 1$ and $F(\mathbold{\theta})$ is from \eqref{F}.
\end{thm}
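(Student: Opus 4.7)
The plan is to obtain the stated identity as the KPZ-scaling limit of a multi-time distributional formula for geometric last passage percolation derived in \cite{JR1}. In \cite{JR1} the joint CDF of $\Gb((1,1) \to (M_i, N_i))$ is written as a $\theta$-contour integral of a Fredholm determinant whose kernel is a weighted sum of \emph{seven} families of double contour integrals. Because Fredholm determinants are invariant under similarity transformations, we are free to modify the conjugation class of the kernel; the central idea is to replace the conjugation used in \cite{JR1} by the multiplication operator $\Upsilon$ from \eqref{eqn:conj}. The weights $\kappa_r|u|+\mu\,\mathrm{sgn}(u)|u|^{3/2}$ are designed so that, after KPZ rescaling, two of the seven prelimit families drop out in trace norm, leaving precisely the five families $F[{\scriptstyle 0}]$, $F[{\scriptstyle p \vt p}]$, $F[{\scriptstyle k,k \vt \emptyset}]$, $F^{\veps}[{\scriptstyle k_1 \vt (k_1,k_2]}]$ and $F^{\veps}[{\scriptstyle k_1,k_2 \vt (k_1,k_2]}]$ of Definition \ref{def:F}.

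The asymptotic analysis specializes to $a_i = b_j = \sqrt{q}$ and the KPZ window $(M_i, N_i) = (T t_i - c_1 T^{2/3} x_i,\, T t_i + c_1 T^{2/3} x_i)$ with height level $c_2 T t_i + c_3 T^{1/3} \xi_i$. For each surviving family a steep descent analysis is carried out. Under the change of variable $z = 1 + T^{-1/3} w$, the prelimit circles around $0$ and $1$ rescale to the vertical contours $\Gamma_{-d}$ and $\Gamma_{D_k}$; the logarithmic exponent of the geometric kernel converges to the cubic polynomial defining $\G(w \vt t, x, \xi)$ in \eqref{Gtxx}; and the rational Cauchy factors $(z - \zeta)^{-1}$ pass to their limiting form. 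The combinatorial coefficients $\theta(r \vt \veps)$ and $\Theta(r \vt k)$ are unchanged by the scaling, so the weighted sum structure of \eqref{F} survives intact.

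To interchange the $T \to \infty$ limit with the Fredholm expansion and with the outer $\theta$-integration, one establishes trace-class bounds on $\Upsilon^{-1} F_T(\mathbold{\theta}) \Upsilon$ that are uniform in $T$ and in $|\theta_k| = r > 1$. The weight $e^{\kappa_r |u|}$, with $\kappa_r$ strictly increasing in the block index $r$, shifts the saddle in an $r$-dependent way so that the $p \times p$ block Hadamard majorant is summable; the weight $e^{\mu\, \mathrm{sgn}(u) |u|^{3/2}}$ with $\mu < (2/3) t_p^{-1/2}$ matches the cubic tail of $\G(w \vt t_p, x_p, \xi_p)$ and leaves a Gaussian remainder on $u$. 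Dominated convergence then yields pointwise convergence of the Fredholm determinant, and the bound extends uniformly on the $\theta$-circles to justify the outer contour integral.

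The principal obstacle lies in those uniform bounds. Each entry of $F_T(\mathbold{\theta})$ involves up to $p$ nested contour integrals with poles from factors of the form $(z_k - z_{k+1})^{-1}$ and $(z_k - \zeta_j)^{-1}$, and for $|\theta_k| = r > 1$ the coefficients $\theta(r \vt \veps)$ can be large. The delicate points are: choosing steep descent contours compatible with the strict ordering \eqref{Dorder} of the $D_k$'s so that no pole is crossed during deformation; quantitatively balancing the $|u|^{3/2}$ weight in $\Upsilon$ against the cubic tail of $\G$ to obtain an integrable Gaussian residual; and verifying that the two ``dropped'' prelimit families have a conjugated kernel whose trace norm truly vanishes as $T \to \infty$, making the reduction from seven to five families rigorous rather than merely formal.
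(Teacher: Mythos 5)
Your overall route is the paper's: quote the multi-time formula for geometric PNG from \cite{JR1}, conjugate by $\Upsilon$, and pass to the KPZ limit via pointwise convergence of the kernels plus uniform Hadamard-type majorants (``goodness'') and dominated convergence. But your account of the key new step --- the mechanism by which seven families become five --- is wrong, and executing your plan as written would fail. No prelimit families ``drop out in trace norm.'' The prelimit formula (Theorem 2 of \cite{JR1}, restated here as Theorem \ref{thm:dpng}) already consists of exactly five families $L[{\scriptstyle 0}]$, $L[{\scriptstyle p \vt p}]$, $L[{\scriptstyle k,k \vt \emptyset}]$, $L^{\veps}[{\scriptstyle k_1 \vt (k_1,k_2]}]$, $L^{\veps}[{\scriptstyle k_1,k_2 \vt (k_1,k_2]}]$, and each one converges (up to sign) to its continuum counterpart in Definition \ref{def:F}; nothing vanishes. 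The seven families live in the \emph{limiting} formula of \cite{JR1}, and they arise there because the conjugation used in that paper cannot control the limit of the block-lower-triangular ``free'' kernel $L[{\scriptstyle 0}]$ directly, forcing a finer decomposition in the limit.

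The genuinely new content is therefore Proposition \ref{prop:L0goodness}. The entry $L[{\scriptstyle 0}](r,u;s,v)$ is supported on $s<r^{*}$, and its natural steep-descent bound depends only on $u-v$ (a function of the form $e^{\Psi((u-v)/\Delta)}$), which does \emph{not} factor as $g_r(u)g_s(v)$ with integrable factors --- this is precisely what blocks a direct limit under a block-independent conjugation. The fix is the block-dependent weight $e^{\kappa_r|u|}$ in \eqref{eqn:conj} with $\kappa_r$ strictly increasing: since $s<r^{*}$ one interpolates $\kappa_s<\kappa<\kappa_r$, peels off an integrable factor $e^{(\kappa_s-\kappa)|v|}$, and absorbs the remainder into an integrable function of $u$. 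You do invoke the increasing $\kappa_r$ and ``block Hadamard summability,'' but you attach them to the wrong conclusion (vanishing of families) rather than to the goodness of $\Upsilon^{-1}L_T[{\scriptstyle 0}]\Upsilon$, which is the one estimate not already available in \cite{JR1}. Two smaller points: the steep-descent critical point is $w_c=1-\sqrt{q}$, not $1$, and the convergence statements and decay bounds for the remaining four families are imported wholesale from Proposition 5.1 and Lemma 5.3 of \cite{JR1} rather than re-derived.
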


\subsection{Outline of the article}
We will prove Theorem \ref{thm:droplet} first in Section \ref{sec:droplet}.
The proof follows \cite{JR1} with a new observation which is Proposition \ref{prop:L0goodness}.
In order to prove Theorem \ref{thm:stat}, we will at first establish in Section \ref{sec:limit} a limit theorem for the two-time law in a certain PNG model with boundary sources.
Theorem \ref{thm:stat} will then be derived in Section \ref{sec:stat} by taking an appropriate limit of the latter.

\section*{Acknowledgements}
I thank Kurt Johansson for discussions on the topic of this article.
I also had preliminary discussions with Yuchen Liao and Janosch Ortmann, and I thank them both.
This work began in earnest during the program "Random Matrices and Scaling Limits" at the Institut Mittag-Leffler, and I thank the institute for its hospitality.

\section{Proof of Theorem \ref{thm:droplet}} \label{sec:droplet}
Consider the geometric PNG model \eqref{eqn:G} with the choice of parameters:
$$ a_i = b_j = \sqrt{q}, \quad 0 < q < 1.$$
In Theorem 2 of \cite{JR1}, a formula is derived for the probability
\begin{equation} \label{ptime}
\pr{\mathbold{G}(m_1,n_1) < a_1, \mathbold{G}(m_2, n_2) < a_2, \ldots, \mathbold{G}(m_p,n_p) < a_p},
\end{equation} 
where $m_1 < m_2 < \cdots < m_p$ and $n_1 < n_2 < \cdots < n_p$. On taking a scaling limit of this probability, one gets a formula for the KPZ droplet.

For a large parameter $T$, consider $m_k$, $n_k$ and $a_k$ of the form (ignoring rounding)
\begin{align} \label{eqn:KPZscaling}
	n_k  &= t_kT-c_1x_k(t_kT)^{\frac{2}{3}}, \\ \nonumber
	m_k &= t_k T+c_1 x_k(t_kT)^{\frac{2}{3}}, \\ \nonumber
	a_k  &= c_2t_kT+c_3\xi_k(t_kT)^{\frac{1}{3}}.
\end{align}
The parameters above are
$0 < t_1 < t_2 < \cdots < t_p$, $x_1, x_2, \ldots, x_p \in \R$ and $\xi_1, \xi_2, \ldots, \xi_p \in \R$.
The $c_i$s are constants given according to
\begin{equation}\label{scalingconstants}
c_1=q^{-\frac{1}{6}}(1+\sqrt{q})^{\frac{2}{3}},\quad c_2=\frac{2\sqrt{q}}{1-\sqrt{q}},\quad c_3=\frac{q^{\frac{1}{6}}(1+\sqrt{q})^{\frac{1}{3}}}{1-\sqrt{q}},
\end{equation}
One is interested in the large $T$ limit of \eqref{ptime} with this scaling.
Introduce also the constants
\begin{equation} \label{eqn:scaling2}
c_0=q^{-\frac{1}{3}}(1+\sqrt{q})^{\frac{1}{3}}, \quad c_4 = \frac{q^{1/3}(1-\sqrt{q})}{(1+\sqrt{q})^{1/3}},
\end{equation}
which will appear during the proof.

\subsection{Multi-point formula for discrete PNG}
We quote Theorem 2 of \cite{JR1}, which provides a formula for $\eqref{ptime}$. First, we need to introduce notation.

\paragraph{\textbf{Delta notation}}
For integers $0 \leq k_1 < k_2 \leq p$, and $y$ being $m, n$ or $a$ from \eqref{eqn:KPZscaling}, define
\begin{equation} \label{delta}
\D_{k_1, k_2} y = y_{k_2} - y_{k_1} \quad \text{and} \;\; \D_k y = y_k - y_{k-1}.
\end{equation}
By convention, $y_0 = 0$ for $y = n,m,a$.
We will also use the shorthand
\begin{equation*}
\D_{k_1,k_2} (y^1, \ldots, y^{\ell}) = (\D_{k_1,k_2}y^1, \ldots, \D_{k_1,k_2}y^{\ell}) \quad \text{and}\;\;
\D_k (y^1, \ldots, y^{\ell}) = (\D_k y^1, \ldots, \D_k y^{\ell}).
\end{equation*}

\paragraph{\textbf{Block notation}}
The matrices that appear will have a $p \times p$ block structure
with the rows and columns partitioned according to
$$\{ 1, 2, \ldots, n_p \} = (0,n_1] \cup (n_1, n_2] \cup \cdots \cup (n_{p-1}, n_p].$$
Recall 
\begin{align}
r^{*} &= \min \{r,p-1\} \;\;\; \text{if}\;\; 1 \leq r \leq p. 
\end{align}
For an $n_p \times n_p$ matrix $M$, $1 \leq i,j \leq n_p$ and $1 \leq r,s \leq p$, write
\begin{equation} \label{eqn:blocknot}
	M(r,i ; s,j) = \mathbf{1}_{\big\{ i \in (n_{r-1}, \, n_r], \; j \in (n_{s-1}, \, n_s] \big\} } \cdot M(i,j).
\end{equation}
This is the $p \times p$ block structure of $M$ according to the partition of rows and columns above.

\paragraph{\textbf{Complex integrands}}
Define, for $n,m,a \in \Z$ and $w \in \C \setminus \{0,1-q,1\}$,
\begin{equation}\label{gnmx}
G^{*}(w \vt n,m,a)=\frac{w^n(1-w)^{a+m}}{\big(1-\frac{w}{1-q}\big)^m},
\end{equation}
as well as the function
\begin{equation} \label{Gnmx}
G(w \vt n,m,a) = \frac{G^{*}(w \vt n,m,a)}{G^{*} \big (1-\sqrt{q} \vt n,m,a \big)}.
\end{equation}
The number $ w_c = 1- \sqrt{q} $ is the critical point around which we will perform steepest descent analysis.

\begin{defn}
    Define the following $n_p \times n_p$ matrices according to their block structure.
    Denote by $\gamma_{\tau}$ a circular contour around 0 of radius $\tau < 1-\sqrt{q}$.
    Denote by $\gamma_R(1)$ a circular contour around 1 of radius $R$ with $q < R < \sqrt{q}$.

    (0) Define
    \begin{align*}
&L_{\scriptstyle{0}}(r,i; s,j) =
\frac{\ind{ s < r^{*}}}{1-\sqrt{q}} \oint\limits_{\gamma_{\tau}} d\zeta \, \frac{1}{G\big(\zeta \vt i-j+1, \, \D_{s, r^{*}}(m,a) \big)}.
\end{align*}

(1) Define
\begin{align*}
    L[{\scriptstyle p \vt p}](r,i;s,j) & = \frac{\ind{r=p}}{1-\sqrt{q}} \oint\limits_{\gamma_{\tau}} d\zeta_2 \oint_{\gamma_{R}(1)} dz_p\,
	 \frac{G \big(z_p \vt n_p-i, \D_p m, \D_p a\big)}{G\big (\zeta_2 \vt n_p-j+1, m_p-m_{s^*}, a_p-a_{s^*}\big)\, (z_p-\zeta_2)}.
\end{align*}

(2) Let $\tau_2 < \tau_1 < 1-\sqrt{q}$. Define, for $1 \leq k \leq p$,
\begin{align*}
    &L [{\scriptstyle k,k \vt \emptyset}]\, (r,i;s,j) = \frac{\ind{s < k < r^{*}}}{1-\sqrt{q}}\, \oint\limits_{\gamma_{\tau_1}} d\zeta_1\,  \oint\limits_{\gamma_{\tau_2}} d \zeta_2\,
    \frac{G \big(\zeta_1 \vt n_k - i, m_k- m_{r^*}, a_k-a_{r^*} \big)}{G \big(\zeta_2 \vt n_k - j+1, m_k-m_s, a_k-a_s\big) \,(\zeta_1 - \zeta_2)}.
\end{align*}

The following two family of matrices are defined for $0 \leq k_1 < k_2 \leq p$ and $\veps  \in \{1,2\}^{p-1}$. Let $\tau_2 < \tau_1 < 1-\sqrt{q}$.
Consider radii $R_{k_1+1}, \ldots, R_{k_2}$ such that $q < R_k < \sqrt{q}$ for every $k$, and they are ordered in the following way:
$$ R_k < R_{k+1} \;\; \text{if}\;\; \eps_k = 2 \quad \text{while} \quad R_k > R_{k+1} \;\;\text{if}\;\; \eps_k = 1.$$
Note this depends only on $\eps_{k_1+1}, \ldots, \eps_{k_2-1}$.
It is possible to arrange the radii according to $\veps$ as shown in \cite{JR1}.
Set $\vec{\gamma}_{R^{\veps}} = \gamma_{R_{k_1+1}}(1) \times \cdots \times \gamma_{R_{k_2}}(1)$.

(3) Define
\begin{align*}
&L^{\veps} [{ \scriptstyle k_1 \vt (k_1, k_2]}] (r,i;s,j) =  \frac{\ind{k_1 < r^{*}, \; s = k_2 < p,\, k_1<k_2}}{1-\sqrt{q}}\, \oint\limits_{\gamma_{\tau_1}} d\zeta_1 \oint\limits_{\vec{\gamma}_{R^{\veps}}}\, dz_{k_1+1} \cdots d z_{k_2} \\
& \frac{\prod_{k_1 < k < k_2} G \big(z_k \vt \D_k (n,m,a)\big) G \big( z_{k_2} \vt j-1-n_{k_2-1}, \, \D_{k_2}(m, a)\big) \,
\left (\frac{1-\zeta_1}{1-z_1}\right)^{\ind{k_1=0}}}
{G\big(\zeta_1 \vt  i-n_{k_1}, m_{r^*}-m_{k_1}, a_{r^*}-a_{k_1} \big) \prod_{k_1 < k < k_2}(z_k - z_{k+1}) \, (z_{k_1 +1} - \zeta_1)}.
\end{align*}

(4) Define 
\begin{align*}
&L^{\veps} [{\scriptstyle k_1, k_2 \vt (k_1, k_2]}] (r,i;s,j)  =  \frac{\ind{k_1 < r^{*}, \; s^{*} < k_2,\, k_1<k_2}}{1-\sqrt{q}}\,
\oint\limits_{\gamma_{\tau_1}} d \zeta_1 \oint\limits_{\gamma_{\tau_2}} d \zeta_2 \,
\oint\limits_{\vec{\gamma}_{R^{\veps}}} d z_{k_1+1} \, d z_{k_1+2} \cdots d z_{k_2} \\
& \frac{\prod_{k_1 < k \leq k_2} G \big ( z_k \vt \D_k (n,m,a) \big) \, \prod_{k_1 < k < k_2} (z_k - z_{k+1})^{-1}
\left (\frac{1-\zeta_1}{1-z_1}\right)^{\ind{k_1 =0}}(z_{k_1+1}-\zeta_1)^{-1} (z_{k_2}-\zeta_2)^{-1}}
{G\big(\zeta_1 \vt  i-n_{k_1}, m_{r^*}-m_{k_1}, a_{r^*}-a_{k_1} \big) \, G \big(\zeta_2 \vt n_{k_2}-j+1, m_{k_2}-m_{s^*}, a_{k_2}-a_{s^*} \big)}.
\end{align*}
\end{defn}

Let $\theta_1, \ldots, \theta_{p-1}$ be
non-zero complex numbers and $\mathbold{\theta} = (\theta_1,\ldots, \theta_{p-1})$. Recall $\theta(r \vt \veps)$
and $\Theta(r \vt k)$ from \eqref{thetaeps} and \eqref{theta}, respectively. Define the following $n_p \times n_p$ matrices.

\begin{align*}
L^{(0)}(r,u; s,v) & = (1+ \Theta(r \vt s)) \cdot L[{\scriptstyle 0}]\, (r,u;s,v).\\
L^{(1)}(r,u; s,v) & = \sum_{1 \leq k \leq p} \Theta(r \vt k)\cdot L[{\scriptstyle k,k \vt \emptyset}]\, (r,u;s,v).
\end{align*}
The final matrix involves $k_1, k_2 \in \{0,\ldots, p\}$ and $\veps \in \{1,2\}^{p-1}$. They must satisfy \eqref{sumcond}.
Recall the notation $(-1)^{\eps_{[k_1,k_2]}}$ following \eqref{theta}. Define
\begin{align*}
& L^{(2)}(r,u;s,v)  = \sum_{\substack{k_1, k_2,\, \veps \\ \text{satisfies}\; \eqref{sumcond}}}
(-1)^{\eps_{[k_1,k_2]} + k_1 + k_2^*} \cdot \theta(r \vt \veps) \, \times \\
& \quad  \Big [  L^{\veps}[{\scriptstyle k_1 \vt (k_1,k_2]}] + L^{\veps}[{\scriptstyle k_1,k_2 \vt (k_1,k_2]}] +
\ind{k_1=p-1,\,k_2=p} L[{\scriptstyle p \vt p}] \Big ] (r,u;s,v). 
\end{align*}

Finally, define the kernel
\begin{equation} \label{L}
L(\mathbold{\theta}) = L^{(0)} + L^{(1)} + L^{(2)}.
\end{equation}

The following theorem is proved in \cite{JR1}.
\begin{thm} \label{thm:dpng}
Suppose $p \geq 2$. For $m_1 < m_2 < \cdots < m_p$ and $n_1 < n_2 < \cdots < n_p$,
\begin{align*}
&\pr{\mathbold{G}(m_1,n_1) < a_1, \mathbold{G}(m_2,n_2) < a_2, \ldots, \mathbold{G}(m_p,n_p) < a_p} = \\
& \quad \oint\limits_{\gamma_r^{p-1}} d\theta_1 \cdots d\theta_{p-1}\,
\frac{\dt{I + L(\mathbold{\theta})}}{\prod_{k=1}^{p-1} (\theta_k -1)}.
\end{align*}
Here, $\gamma_r^{p-1} = \gamma_r \times \cdots \times \gamma_r$ ($p-1$ times) and $\gamma_r$
is a counter-clockwise, circular contour around the origin of radius $r > 1$.
\end{thm}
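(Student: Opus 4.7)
I would derive the formula in the spirit of Baik--Liu \cite{BL1, BL2} and Johansson's two-time work \cite{JoTwo1, JoTwo2} (the $p=2$ case), essentially reproducing the route of \cite{JR1}. The first step is to encode the first $p-1$ indicators via auxiliary contour integrals. Since $\mathbold{G}(m,n) \in \Z_{\geq 0}$ and $r > 1$,
\begin{equation*}
\ind{\mathbold{G}(m_k, n_k) < a_k} = \oint\limits_{\gamma_r} \frac{\theta_k^{a_k - 1 - \mathbold{G}(m_k, n_k)}}{\theta_k - 1}\, d\theta_k, \quad 1 \leq k \leq p-1.
\end{equation*}
Substituting these representations and exchanging expectation with integration turns the joint probability into a $(p-1)$-fold $\theta$-integral whose integrand is the expectation of $\prod_{k<p} \theta_k^{a_k - 1 - \mathbold{G}(m_k, n_k)}\cdot \ind{\mathbold{G}(m_p, n_p) < a_p}$. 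Keeping the final indicator intact is essential because it eventually produces the trace-class Fredholm structure, while the $\theta_k$-monomials act as tractable spectral weights on the earlier time slices.

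The second step expresses this expectation as a single determinant via the Schur-process machinery. Since $m_1 < \cdots < m_p$ and $n_1 < \cdots < n_p$, the sub-rectangles are nested, and under RSK the geometric weights produce a coupled chain of Young diagrams $\lambda^{(1)} \subseteq \cdots \subseteq \lambda^{(p)}$ with $\lambda^{(k)}_1 = \mathbold{G}(m_k, n_k)$. Their joint law is a Schur process; the $\theta$-weights enter as principal specialisations and preserve the determinantal structure. Applying Cauchy--Binet in the style of Borodin--Ferrari--Pr\"ahofer--Spohn and \cite{JoTwo1} then yields a matrix determinant $\dt{I + L(\mathbold{\theta})}$ on $\ell^2(\{1,\ldots, n_p\})$, with the $p \times p$ block decomposition aligned with $\{1,\ldots, n_p\} = \bigcup_r (n_{r-1}, n_r]$ arising naturally because the ``column index'' contributing to the $r$-th observation takes values in $(n_{r-1}, n_r]$.

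To bring the kernel into the stated form one performs contour surgery on the double/multiple integrals built from the function $G(w \vt n, m, a)$ normalised at the critical point $w_c = 1-\sqrt{q}$. The $\zeta$-variables live on small circles of radius $\tau < 1-\sqrt{q}$ around $0$, while the $z$-variables live on circles of radius in $(q, \sqrt{q})$ around $1$. Different orderings of the $z$-contour radii, parameterised by $\veps \in \{1,2\}^{p-1}$, correspond to different residue patterns picked up when adjacent contours are deformed past one another, producing the five basic kernel families $L[{\scriptstyle 0}]$, $L[{\scriptstyle p \vt p}]$, $L[{\scriptstyle k,k \vt \emptyset}]$, $L^{\veps}[{\scriptstyle k_1 \vt (k_1, k_2]}]$, $L^{\veps}[{\scriptstyle k_1, k_2 \vt (k_1, k_2]}]$, classified by which $\zeta$-variables are present and which range of $z$-variables they are paired with.

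The main obstacle is the combinatorial bookkeeping of this last step. One must verify that contour radii can be chosen consistently with \eqref{Dorder} in every summand (the explicit recipe $R_{k+1} = R_k + (-1)^{\eps_k+1}\, 2^k$ is tailored precisely for this), that the expansion exhausts exactly the cases enumerated by \eqref{sumcond}, and that the weights $\theta(r \vt \veps)$, the multipliers $\Theta(r \vt k)$ and the signs $(-1)^{\eps_{[k_1, k_2]}+k_1+k_2^{*}}$ are all produced consistently by tracking residues crossed during the deformations. This is the technical core of \cite{JR1}, and the remainder of the present paper proceeds by taking its output as input and then passing to the KPZ scaling limit.
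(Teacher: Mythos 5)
The first thing to note is that the paper itself does not prove Theorem \ref{thm:dpng}: it is quoted verbatim as Theorem 2 of \cite{JR1} ("The following theorem is proved in \cite{JR1}"), and the present article only uses it as input for the KPZ scaling limit. Your opening step is correct and harmless: since $\Gb(m_k,n_k)$ is a nonnegative integer and $r>1$, indeed $\ind{\Gb(m_k,n_k)<a_k}=\oint_{\gamma_r}\theta_k^{a_k-1-\Gb(m_k,n_k)}(\theta_k-1)^{-1}\,d\theta_k$, and Fubini is easily justified. The genuine gap is in your second step. For nested rectangles in which \emph{both} coordinates increase (the time-like direction relevant here), the RSK shapes $\lambda^{(1)}\subseteq\cdots\subseteq\lambda^{(p)}$ do \emph{not} form a Schur process: once row- and column-additions are mixed, the shape sequence is no longer Markov with skew Schur transition weights, and the joint correlations are not determinantal. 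This is precisely the obstruction that made temporal laws hard; if the joint law were a Schur process, an Eynard--Mehta/Okounkov--Reshetikhin kernel would give the multi-time distribution as a single Fredholm determinant and none of the $\theta$-integrals, the $\veps$-sums, or the five kernel families would be needed. Consequently your central claim --- that the integrand $\E{\prod_{k<p}\theta_k^{a_k-1-\Gb(m_k,n_k)}\ind{\Gb(m_p,n_p)<a_p}}$ equals $\dt{I+L(\mathbold{\theta})}$ --- is unsupported, and it is not how the formula is obtained in \cite{JR1}. There one works with the full multilayer (RSK) configurations at the times $(m_k,n_k)$, uses a nontrivial determinantal formula for the transition between consecutive times, and introduces the $\theta_k$ to resum geometric series over unbounded auxiliary summation variables (this is exactly why $|\theta_k|=r>1$ is required); the identification with $\dt{I+L(\mathbold{\theta})}$ holds only under the integrals against $\prod_k(\theta_k-1)^{-1}$, after discarding terms whose $\theta$-integrals vanish.

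Your third step is a description of the desired answer rather than an argument: the classification into the five basic families with the weights $\theta(r\vt\veps)$, $\Theta(r\vt k)$, the contour orderings \eqref{Dorder} and the signs is exactly the content to be proved, and you explicitly defer it to \cite{JR1}. Deferring is consistent with how this paper treats the statement (pure citation), but then the sketch in between should not rest on the Schur-process assertion, which is false in the time-like setting; as written, the proposal neither reproduces the actual argument of \cite{JR1} nor supplies a self-contained alternative route to the identity.
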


By taking the large $T$ limit of the formula from Theorem \ref{thm:dpng} under the scaling \eqref{eqn:KPZscaling}, we will establish Theorem \ref{thm:droplet}.

\subsection{Setting for asymptotic}
Consider the space $X = \{1, 2, \ldots, p\} \times \R$ and the measure $\lambda$ on it defined by
$ \int\limits_{X} d\lambda \, f = \sum_{k=1}^{p-1} \int_{-\infty}^{0} dx\, f(k,x) \; + \; \int_{0}^{\infty} dx\, f(p,x)$.
Define the Hilbert space
\begin{equation} \label{Hspace}
\hb = L^2(X,\lambda) \cong \underbrace{L^2(\R_{<0}, dx) \oplus \cdots \oplus L^2(\R_{<0},dx)}_{p-1} \, \oplus \,L^2(\R_{>0},dx)\,.
\end{equation}
Recall the partition $\{1\, \ldots, n_p\} = (0,n_1] \cup \cdots (n_{p-1},n_p]$.

An $n_p \times n_p$ matrix $M$ embeds as a kernel $\widetilde{M}$ on $\hb$ by
\begin{equation} \label{matrixembed}
\widetilde{M}(r,u ; \, s,v) = M \big(r, \, n_{\min \{r,p-1\}} + \lceil u \rceil; \; s, \, n_{\min \{s,p-1\}} + \lceil v \rceil \big).
\end{equation}
Here we have used the block notation \eqref{eqn:blocknot}.

By design,
$$  \dt{I+M}_{n_p \times n_p} = \dt{I + \widetilde{M}}_{\hb}$$
where the latter determinant is according to the Fredholm series expansion
$$ \dt{I + \widetilde{M}}_{\hb} = 1 + \sum_{k\geq 1} \frac{1}{k!} \int\limits_{X^k} d\lambda(r_1, u_1) \cdots d \lambda(r_k, u_k)\,
\dt{\widetilde{M}(r_i,u_i;\, r_j,u_j)}_{k \times k}.$$

In order to perform asymptotics we should rescale variables of $\widetilde{M}$ according to KPZ scaling \eqref{eqn:KPZscaling}.
In this regard, define $$\nu_T = c_0 T^{1/3}.$$ We change variables $(r,u) \mapsto (r, \nu_T \cdot u)$
in the Fredholm determinant of $\widetilde{M}$ above. So if we define a new kernel
\begin{equation} \label{Fredholmembed}
M_T(r,u ;\, s,v) = \nu_T \, \widetilde{M} \big ( r,\, \nu_T \cdot u;\; s,\, \nu_T \cdot v\big ),
\end{equation}
then
$$ \dt{I+ M_T}_{\hb} = \dt{I + M}_{n_p \times n_p}.$$

\begin{defn} \label{kerneldef}
Let $M_1, M_2, \ldots$, be a sequence of matrices understood
in terms of the $p \times p$ block structure above. Let $\widetilde{M}_N$ be the embedding of $M_N$
into $\hb$ as in \eqref{matrixembed}, and $M_T$ the rescaling according to \eqref{Fredholmembed}.
\begin{itemize}
	\item The kernels $M_T$ are \emph{good} if there are non-negative, bounded and integrable functions
	$g_1(x)$, $\ldots, g_p(x)$ on $\R$ such that following holds. For every $T$,
	$$ |M_T(r;u,\, s,v)| \leq g_r(u) g_s(v) \quad \text{for every}\;\; 1\leq r,s \leq p \;\; \text{and}\;\; u,v \in \R.$$
	\item The matrices are \emph{convergent} if there is a integral kernel $F$ on $\hb$ such that
	the following holds uniformly in $u,v$ restricted to compact subsets of $\R$.
	$$ \lim_{T \to \infty}\, M_T(r,u; \,s,v) = F(r,u;\, s,v) \quad \text{for every}\;\; 1\leq r,s \leq p.$$
\end{itemize}
\end{defn}
A straightforward consequence of the definitions, dominated convergence theorem and Hadamard's inequality is: if $M_1, M_2, \ldots$ are good and convergent with limit $F$ on $\hb$ then
$$\dt{I+ M_T}_{\hb} \to \dt{I+F}_{\hb} < \infty.$$
The kernel $F$ satisfies the same goodness bound as its approximants.

\subsection{Convergence of kernels}
The following proposition is proved in \cite{JR1} as Proposition 5.1.

\begin{prop} \label{prop:oldlimits}
The matrices $L^{(1)}$ and $L^{(2)}$ are convergent to $F^{(1)}$ and $F^{(2)}$, respectively, due to the following. Suppose $0 \leq k_1 < k_2 \leq p$.
\begin{enumerate}
\item The matrix $L^{\veps}[{\scriptstyle k_1, k_2 \vt (k_1, k_2]}]$ is convergent with limit
$(-1)^{k_2-k_1} \, F^{\veps}[{\scriptstyle k_1, k_2 \vt (k_1, k_2]}]$.
\item The matrix $L^{\veps}[{\scriptstyle k_1 \vt (k_1, k_2]}]$ is convergent with limit
$(-1)^{k_2-k_1} \, F^{\veps}[{\scriptstyle k_1 \vt (k_1, k_2]}]$.
\item The matrix $L[{\scriptstyle k,k \vt \emptyset}]$ is convergent with limit $F[{\scriptstyle k,k \vt \emptyset}]$.
\item The matrix $L[{\scriptstyle p \vt p}]$ is convergent with limit $- F[{\scriptstyle p \vt p}]$.
\end{enumerate}
\end{prop}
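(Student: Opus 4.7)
I would proceed by steepest descent analysis at the critical point $w_c = 1-\sqrt{q}$ of the factor $G^*(w)$. Writing $\log G^*(w \vt n,m,a) = tT\,\phi(w) + T^{2/3}\,\psi_1(w) + T^{1/3}\,\psi_2(w) + O(1)$ under the scaling \eqref{eqn:KPZscaling}, with $\phi(w) = \log w + (c_2+1)\log(1-w) - \log(1-w/(1-q))$, the choice of $c_2$ in \eqref{scalingconstants} yields $\phi'(w_c) = 0$, and a direct computation further shows $\phi''(w_c) = 0$, so $w_c$ is a double saddle with $\phi'''(w_c) \neq 0$. Substituting $w = w_c + \zeta/\nu_T$ with $\nu_T = c_0 T^{1/3}$ and Taylor expanding, the constants $c_0, c_1, c_3$ are precisely those making the resulting cubic, quadratic, and linear coefficients in $\zeta$ match those of the function $\G$ in \eqref{Gtxx}; hence $G \to \G(\zeta \vt t,x,\xi)$ uniformly on compact subsets of $\C$.

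Next I would deform each contour to pass through $w_c$ along the local steepest descent direction, which becomes a vertical line in the rescaled variable. The $\zeta_j$-contours $\gamma_{\tau_j}$ (circles around $0$) map to $\Gamma_{-d_j}$ preserving orientation. The $z_k$-contours $\gamma_{R_k}(1)$ (circles around $1$) map to $\Gamma_{D_k}$ but with reversed orientation, since a counterclockwise circle around $1$ is locally traversed top-to-bottom near $w_c$ (to the left of $1$), opposite to the bottom-to-top traversal of the small circle around $0$ near its rightmost point. Each $z_k$-orientation flip contributes $-1$, accounting for the factors $(-1)^{k_2-k_1}$ in items (1), (2), and $-1$ in item (4); item (3) has no $z$-contours and hence no sign. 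The pairwise orderings on $R_k$ from Theorem \ref{thm:dpng} translate directly into the orderings \eqref{Dorder} on $D_k$ (note that the orderings swap meaning because the orientation has been flipped).

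After rescaling block-structured indices via \eqref{Fredholmembed} (absorbing the Jacobian $\nu_T$), discrete factors such as $w^{n_r - i}$ and $(1-w)^{a_r - a_s}$ converge to the exponentials $e^{-\zeta u}$, $e^{z_{k_2}v - \zeta_1 u}$, $e^{\zeta_2 v - \zeta_1 u}$, etc., that appear in Definition \ref{def:F}. To lift pointwise convergence on compacts to convergence under the Fredholm series, one verifies the \emph{goodness} condition of Definition \ref{kerneldef}: super-exponential decay of $|e^{(t/3)\zeta^3}|$ on vertical contours, combined with the exponential factors in $u,v$ coming from the block substitution, produces bounded integrable majorants $g_r(u) g_s(v)$ uniformly in $T$. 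For the far-away portions of each original contour, one bounds $|G^*(w)/G^*(w_c)|$ above by a quantity exponentially small in $T$ outside a fixed neighborhood of $w_c$, so those portions contribute negligibly.

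The hard part will be the simultaneous deformation of $k_2 - k_1$ contours $\gamma_{R_{k_1+1}}(1),\ldots,\gamma_{R_{k_2}}(1)$ in items (1) and (2), with prescribed pairwise radius orderings encoded by $\veps$. One must choose an intermediate deformation trajectory so that (i) none of the Cauchy denominators $(z_k - z_{k+1})$, $(z_{k_1+1} - \zeta_1)$, $(z_{k_2} - \zeta_2)$ vanish on any intermediate contour, (ii) no $z$-contour crosses a $\zeta$-contour or the poles at $w = 0, 1, 1-q$ of $G$, and (iii) the limiting vertical contours inherit the compatibility condition \eqref{Dorder}. This multi-contour bookkeeping, combined with the uniform domination needed for the Fredholm series, constitutes the technical core of the argument.
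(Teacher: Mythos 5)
The paper does not prove this proposition: it states it verbatim as Proposition~5.1 of \cite{JR1} and simply cites that result, so there is no in-text argument to compare your sketch against. That said, your steepest descent outline is a plausible reconstruction of the cited proof, and your accounting of the sign factors is correct. Near $w_c=1-\sqrt q$, the circles $\gamma_\tau$ around $0$ are locally traversed upward while the circles $\gamma_R(1)$ around $1$ are locally traversed downward; since the limit kernels in Definition~\ref{def:F} use upward-oriented vertical lines throughout, each $z$-contour contributes a $-1$. This gives $(-1)^{k_2-k_1}$ from the $k_2-k_1$ contours $z_{k_1+1},\ldots,z_{k_2}$ in items (1) and (2), nothing in item (3) where only $\zeta$-variables appear, and $-1$ from the single $z_p$ in item (4) --- matching the statement exactly.

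One small imprecision: the reversal of the ordering constraints --- from $R_k<R_{k+1}$ when $\eps_k=2$ in Theorem~\ref{thm:dpng} to $D_k<D_{k+1}$ when $\eps_k=1$ in \eqref{Dorder} --- is not caused by the orientation flip. It is a geometric effect of the contours being circles around $1$: a smaller radius $R_k$ places the leftmost point $1-R_k$ \emph{farther} from $w_c$, which in the rescaled variable corresponds to a \emph{larger} $D_k$; thus the inclusion ordering of circles around $1$ reverses when translated to real parts of vertical lines. (For the $\zeta$-circles around $0$ the same geometry also reverses, but this is absorbed into the conventions $\tau_2<\tau_1$ versus $d_1<d_2$.) Finally, you correctly flag that the simultaneous deformation of the nested $z$-contours, preserving the Cauchy-factor separations and the orderings encoded by $\veps$, is the genuine technical content; your sketch identifies where the work is but does not carry it out, which is consistent with its status as a reconstruction of a result the paper attributes entirely to \cite{JR1}.
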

The exact same proof argument gives
\begin{prop} \label{prop:newlimit}
   The matrix  $L^{(0)}$ is convergent with limit $F^{(0)}$.
\end{prop}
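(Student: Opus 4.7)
The plan is to reduce the statement to a single saddle-point analysis for $L[{\scriptstyle 0}]$ that is entirely parallel to those used in the proof of Proposition \ref{prop:oldlimits}. Since the weight $1 + \Theta(r \vt s)$ appearing in the definitions of both $L^{(0)}$ and $F^{(0)}$ is the same bounded, $T$-independent factor, it pulls outside the rescaling; it therefore suffices to show that the rescaling of $L[{\scriptstyle 0}]$ is good and converges uniformly on compacts to $F[{\scriptstyle 0}]$.

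After the block embedding \eqref{matrixembed} and the rescaling \eqref{Fredholmembed}, and assuming $s < r^{*}$, the first argument of $G$ inside $L[{\scriptstyle 0}]$ reads $i - j + 1 = \D_{s,r^{*}} n + \nu_T(u-v) + O(1)$. I would deform $\gamma_{\tau}$ so that near the critical point $w_c = 1-\sqrt{q}$ it agrees with the image of a vertical contour $\Gamma_{-d}$ under the zoom $\zeta = w_c + c_4 T^{-1/3}\tilde\zeta$. The Jacobian of this change of variables combines with the prefactor $(1-\sqrt{q})^{-1}$ and the rescaling factor $\nu_T$ from \eqref{Fredholmembed} to yield the clean normalization
\begin{equation*}
\frac{\nu_T \cdot c_4 T^{-1/3}}{1-\sqrt{q}} \;=\; \frac{c_0 c_4}{1-\sqrt{q}} \;=\; 1,
\end{equation*}
so no spurious constants appear.

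A Taylor expansion of $\log G^{*}$ around $w_c$ then shows that, as $T \to \infty$, the macroscopic part of the exponent depending on $\D_{s,r^{*}}(n,m,a)$ converts into $\log \G(\tilde\zeta \vt \D_{s,r^{*}}(t,x,\xi))$; this is precisely the saddle-point computation already performed in \cite{JR1}. The additional shift $\nu_T(u-v)$ of the first argument enters only through the $\zeta^{n}$-factor of $G^{*}$ and contributes
\begin{equation*}
\nu_T(u-v)\bigl(\log\zeta - \log w_c\bigr) \;=\; \frac{\nu_T\, c_4 T^{-1/3}}{w_c}(u-v)\tilde\zeta + O(T^{-1/3}) \;\longrightarrow\; (u-v)\tilde\zeta,
\end{equation*}
again because $c_0 c_4 /(1-\sqrt{q}) = 1$. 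Since this piece sits in the denominator $G^{-1}$, the net surviving exponential factor is $e^{\tilde\zeta(v-u)}$, which is exactly the one appearing in $F[{\scriptstyle 0}]$.

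For the goodness bound I would split the contour into the microscopic neighborhood of $w_c$ of radius $O(T^{-1/3})$ and its complement. On the complement, the standard estimate on $|G|^{-1}$ used in \cite{JR1} gives superexponential smallness in $T$, uniformly in $(u,v)$. On the microscopic part, the Gaussian-type decay of $|\G(\tilde\zeta \vt \cdot)^{-1}|$ along the vertical direction, together with the exponential factor $e^{\tilde\zeta(v-u)}$ from the shift, produces a bound of the product form $g_r(u)g_s(v)$ required by Definition \ref{kerneldef}. The only bookkeeping particular to $L[{\scriptstyle 0}]$, beyond what is already done for Proposition \ref{prop:oldlimits}, is verifying that the $\nu_T(u-v)$ shift of the first argument of $G$ produces the correct exponential in the limit, which is the computation above; I do not anticipate any other obstacle.
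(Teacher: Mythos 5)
Your convergence argument is correct and is essentially the paper's: the paper proves this proposition by invoking ``the exact same proof argument'' as Proposition 5.1 of \cite{JR1}, i.e.\ precisely the saddle-point computation you carry out (the factor $1+\Theta(r\vt s)$ is $T$-independent, the zoom $\zeta = w_c + c_4 T^{-1/3}\tilde\zeta$ sends $G^{-1}$ to $\G^{-1}$, and your normalizations check out since $c_0 c_4 = 1-\sqrt{q} = w_c$, so both the Jacobian factor and the coefficient of the shift term equal $1$). The treatment of the shift $i-j+1 = \D_{s,r^*}n + \nu_T(u-v)+O(1)$ through the $\zeta^{n}$ factor, yielding $e^{\tilde\zeta(v-u)}$, is exactly right and is the only new bookkeeping relative to \cite{JR1}.

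One caveat on your final paragraph: the proposition only asserts convergence in the sense of Definition \ref{kerneldef} (uniform on compacts), for which the estimate $|G(\zeta(\sigma)\vt\cdot)|^{-1}\leq Ce^{-C\sigma^2+\Psi(y)}$ with $y=(u-v)/\D$ bounded suffices as a dominating function on the contour. Your claim that the same analysis yields a product-form goodness bound $g_r(u)g_s(v)$ for $L_T[{\scriptstyle 0}]$ itself is not correct: the natural bound on this kernel depends only on $u-v$ and in fact \emph{grows} like $e^{\mu_2(u-v)_+/\D}$ as $u-v\to+\infty$, so $L_T[{\scriptstyle 0}]$ is not good on its own. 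This is exactly why the paper introduces the conjugation $\Upsilon$ and proves goodness of $\Upsilon^{-1}L_T[{\scriptstyle 0}]\Upsilon$ separately (Proposition \ref{prop:L0goodness}), via a sup-over-$v$ argument that exploits the increasing $\kappa_r$. Since goodness is not part of the statement you were asked to prove, this does not invalidate your proof of the proposition, but the overclaim should be removed or replaced by a reference to the conjugated kernel.
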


\subsection{Decay estimates (goodness) for kernels}
We need to show that $\Upsilon L_T(\mathbold{\theta}) \Upsilon^{-1}$ is a good kernel.
We recall Lemma 5.3 of \cite{JR1}. Define the function
$$ \Psi(y) = e^{- \mu_1 (y)^{3/2}_{-} + \mu_2(y)_{+}}$$
where $(y)_{-} = \max \{-y,0\}$, $(y)_{+} = \max \{y,0\}$. The constants $\mu_1, \mu_2 > 0$ depend only on the parameters $q$ and $\max_i \{ |x_i|, |\xi_i|\}$ and are given in the proof of the lemma (see Lemma 5.7 of \cite{JoTwo2}). Suppose $n,m,a$ take the form
\begin{align*} 
 n &= K - c_1 x K^{2/3} + c_0 y K^{1/3}\\
 m &= K + c_1 x K^{2/3}\\
 h &= c_2 K + c_3 \xi K^{1/3}
\end{align*}
According to Lemma 5.3 of \cite{JR1}, there is a choice of circular contour $\gamma = \gamma(\sigma \vt K,y)$ around zero parametrized by $\sigma \in \R$ such that if $\zeta = \zeta(\sigma) \in \gamma$, $K$ is sufficiently large and $y$ is such that $n \geq 0$, then
\begin{equation} \label{eqn:globaldecaydroplet}
| G(\zeta(\sigma) \vt n,m,a)|^{-1} \leq C e^{-C \sigma^2 + \Psi(y)}.
\end{equation}
Here $C$ is an absolute constant.

\begin{prop} \label{prop:L0goodness}
    The kernels $\Upsilon^{-1} L_T[{\scriptstyle 0}] \Upsilon$ are good if $\mu$ is chosen to satisfy $0 < \mu < \mu_1 t_p^{-1/2}$ and the numbers $\kappa_i$ are increasing in $i \in \{1, \ldots, p\}$.
\end{prop}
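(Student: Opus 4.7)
My plan is to pointwise bound $|\Upsilon^{-1} L_T[0] \Upsilon\,(r,u;s,v)|$ by a product $g_r(u) g_s(v)$ of bounded integrable functions, using the steepest descent estimate \eqref{eqn:globaldecaydroplet} combined with case analysis. The indicator $\mathbf{1}_{s<r^*}$ forces $s \leq p-2$, so $v < 0$ throughout, while $u$ may be positive (if $r = p$) or negative (if $r \leq p-1$). Writing $i = n_{r^*} + \lceil \nu_T u\rceil$, $j = n_s + \lceil \nu_T v\rceil$ and using the KPZ scaling \eqref{eqn:KPZscaling}, the triple $(i-j+1, \D_{s,r^*}m, \D_{s,r^*}a)$ takes the form of Lemma 5.3 of \cite{JR1} with $K = \D_{s,r^*} t \cdot T$, $\bar x = \D_{s,r^*} x$, $\bar \xi = \D_{s,r^*}\xi$, and $\bar y = (\D_{s,r^*} t)^{-1/3}(u - v) + O(K^{-1/3})$. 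Deforming $\gamma_\tau$ onto the steepest descent contour $\gamma(\sigma \vt K, \bar y)$ from that lemma, applying \eqref{eqn:globaldecaydroplet}, and integrating $e^{-C\sigma^2}$ in $\sigma$ (with the $\nu_T$ prefactor absorbed into the contour Jacobian) yields, uniformly in $T$ sufficiently large,
$$|L_T[0](r,u;s,v)| \leq C\,\exp\bigl(-\mu_1(\bar y)_-^{3/2} + \mu_2(\bar y)_+\bigr).$$

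I then combine this with the conjugation factor $e^{-\kappa_r|u| - \mu\,\mathrm{sgn}(u)|u|^{3/2}}\cdot e^{\kappa_s|v| - \mu|v|^{3/2}}$, using $\mathrm{sgn}(v) = -1$. The case $u > 0$ is routine: the conjugation's super-linear decay in both $u$ and $|v|$ easily dominates the linear $\mu_2 \bar y$ contribution, and factorization is immediate. The main obstacle lies in the case $u, v < 0$, where the conjugation instead produces a super-linear \emph{growth} $e^{+\mu|u|^{3/2}}$ that must be controlled. I split further by the sign of $\bar y$. In the sub-regime $|u| \geq |v|$ (so $\bar y \leq 0$), the kernel supplies super-linear decay $e^{-\mu_1(\D_{s,r^*}t)^{-1/2}(|u|-|v|)^{3/2}}$. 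A Young-type inequality $(|u|-|v|)^{3/2} \geq (1 - \eta)|u|^{3/2} - C_\eta |v|^{3/2}$ with $\eta$ chosen small shows the kernel decay dominates the conjugation growth exactly when $\mu < \mu_1(\D_{s,r^*} t)^{-1/2}$, which is guaranteed by the hypothesis $\mu < \mu_1 t_p^{-1/2}$ since $\D_{s,r^*} t \leq t_p$. In the complementary sub-regime $|u| < |v|$ (so $\bar y > 0$), the elementary inequality $|v|^{3/2} - |u|^{3/2} \geq (|v|-|u|)^{3/2}$ converts the combined $|v|^{3/2}$-exponent into $-\mu(|v|-|u|)^{3/2}$; together with $\Psi(\bar y) = \mu_2(\D_{s,r^*} t)^{-1/3}(|v|-|u|)$, the super-linear piece beats the linear piece in $z := |v|-|u|$, producing a factor that is bounded for small $z$ and decays like $e^{-\mu z^{3/2}/2}$ for large $z$. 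Combined with the rearrangement $-\kappa_r|u| + \kappa_s|v| = -(\kappa_r-\kappa_s)|u| + \kappa_s z$ and the monotonicity $\kappa_s < \kappa_r$, this separates into independent exponential decay in $|u|$ and in $|v|$.

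Collecting the estimates across all cases, taking pointwise maxima of the finitely many candidate bounds in each variable, yields the sought bounded integrable functions $g_r, g_s$ and verifies goodness in the sense of Definition \ref{kerneldef}. The hypothesis $\mu < \mu_1 t_p^{-1/2}$ enters precisely in the sub-regime $|u| \geq |v|$, and the monotonicity of $\kappa_r$ is what extracts exponential decay in $|u|$ and $|v|$ from the residual linear terms once the $3/2$-power exponents have been dealt with.
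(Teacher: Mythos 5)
Your overall strategy is the same as the paper's: apply the steepest-descent bound \eqref{eqn:globaldecaydroplet} to obtain $|L_T[{\scriptstyle 0}](r,u;s,v)|\le C\exp\{-\mu_1(\D_{s,r^*}t)^{-1/2}(v-u)_+^{3/2}+\mu_2(\D_{s,r^*}t)^{-1/3}(v-u)_-\}$, then dominate the conjugation factors by a case analysis. Your treatment of the case $u>0$ and of the sub-regime $u,v<0$, $|u|<|v|$ is sound: in the latter, superadditivity of $x\mapsto x^{3/2}$ together with the rearrangement of the linear terms via $\kappa_s<\kappa<\kappa_r$ does produce a product of bounded integrable functions.

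The gap is in the sub-regime $u,v<0$, $|u|\ge|v|$. Write $a=|u|$, $b=|v|$ and $\mu_1'=\mu_1(\D_{s,r^*}t)^{-1/2}$; the exponent to control is $-\kappa_r a+\kappa_s b+\mu a^{3/2}-\mu b^{3/2}-\mu_1'(a-b)^{3/2}$ on $\{a\ge b\ge 0\}$. Your inequality $(a-b)^{3/2}\ge(1-\eta)a^{3/2}-C_\eta b^{3/2}$ does give super-exponential decay in $a$ once $\eta<1-\mu/\mu_1'$, but it leaves behind $\kappa_s b+(C_\eta\mu_1'-\mu)b^{3/2}$ in the $b$-variable. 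Evaluating your inequality at $a=b$ forces $C_\eta\ge 1-\eta>\mu/\mu_1'$, so $C_\eta\mu_1'-\mu>0$: the candidate $g_s(v)$ grows like $e^{c|v|^{3/2}}$ and is neither bounded nor integrable, nor can it be absorbed back into the $a$-decay (since $C_\eta\sim\eta^{-3/2}$ is large). Worse, no rebalancing of constants in a decoupling of this type can succeed, because $\mu a^{3/2}-\mu b^{3/2}-\mu_1'(a-b)^{3/2}$ is genuinely unbounded above on $\{a\ge b\ge0\}$: along the rays $b=(1-\theta)a$ it equals $\big[\mu(1-(1-\theta)^{3/2})-\mu_1'\theta^{3/2}\big]a^{3/2}\approx\tfrac{3\mu}{2}\theta\,a^{3/2}>0$ for small fixed $\theta>0$, while the linear terms contribute only $-(\kappa_r-(1-\theta)\kappa_s)a$. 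So in this regime the pointwise bound coming from \eqref{eqn:globaldecaydroplet} combined with the conjugation \eqref{eqn:conj} does not admit a bound by a product of bounded functions at all. This is precisely the delicate point; note that the paper's own proof disposes of it by asserting that $\sup_{v\le0}g_r(u,v)$ is attained at $v=u+O(1)$ or $v=O(1)$, an assertion that should itself be checked against the interior critical point at $|v|\approx\mu_1'^2(\mu^2+\mu_1'^2)^{-1}|u|$. As written, your argument does not close this case.
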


\begin{proof}
    We may use \eqref{eqn:globaldecaydroplet} with $(n,m,a) = \D_{s, r^{*}}(n, m,a)$, $K = (\D_{s, r^*}t) T$ and $y = (u-v)/\D$ for $\D = (\D_{s, r^*}t)^{1/3}$.
    Then, by the definition of $\Upsilon$, we find that for an absolute constant $C'$,
    $$ |\Upsilon^{-1} L_T[{\scriptstyle 0}] \Upsilon(r,u;s,v)| \leq C' \ind{s < r^*}\, e^{-\kappa_r |u| - \mu \mathrm{sgn}(u) |u|^{3/2} + \kappa_s |v| + \mu \mathrm{sgn}(v) |v|^{3/2} + \Psi((u-v)/\D)}.$$
    We have that $\Psi((u-v)/\D) = e^{-\mu_1 (\D_{s, r^*}t)^{-1/2}(u-v)_{-}^{3/2} + \mu_2 (\D_{s, r^*}t)^{-1/3}(u-v)_{+}}$.
    Suppose $\mu \in (0, \mu_1 t_p^{-1/2})$. Then $\mu < \mu_1 (\D_{s, r^*}t)^{-1/2}$ because $\D_{s, r^*}t \leq t_p$. Since $(y)_{-} = (-y)_{+}$ and $(y)_{+} = (-y)_{-}$, we find that
    \begin{align*}
        &|\Upsilon^{-1} L_T[{\scriptstyle 0}] \Upsilon(r,u;s,v)| \leq \\
        & C' \ind{s < r^*}\, e^{-\kappa_r |u| - \mu \mathrm{sgn}(u) |u|^{3/2} + \kappa_s |v| + \mu \mathrm{sgn}(v) |v|^{3/2} -\mu_1 (\D_{s, r^*}t)^{-1/2}(v-u)_{+}^{3/2} + \mu_2 (\D_{s, r^*}t)^{-1/3}(v-u)_{-}}.
    \end{align*}
    Choose any $\kappa = \kappa_{r,s} \in (\kappa_s, \kappa_r)$, which is possible since $\kappa_s < \kappa_r$ due to $s < r^*$. Note that $v \leq 0$ since $s < r^* < p$.
    The bound above gives
    \begin{equation*}
        |\Upsilon^{-1} L_T[{\scriptstyle 0}] \Upsilon(r,u;s,v)| \leq g_s(v) g_r(u,v)
    \end{equation*}
    where $g_s(v) = e^{(\kappa_s - \kappa)|v|}$ and
    $$ g_r(u,v) = C' e^{-\kappa_r |u| - \mu \mathrm{sgn}(u) |u|^{3/2} + \kappa |v| - \mu |v|^{3/2} -\mu_1 (\D_{s, r^*}t)^{-1/2}(v-u)_{+}^{3/2} + \mu_2 (\D_{s, r^*}t)^{-1/3}(v-u)_{-}}$$
    where we have used the fact that $v \leq 0$. Clearly, $g_s(v)$ is bounded and integrable. We bound $g_r(u,v)$ by
    $$ g_r(u,v) \leq g_r(u) = \sup_{v \leq 0} g_r(u,v).$$
    The supremum is obtained at $v = u + O_{\kappa, \kappa_r,\mu_1,\mu}(1)$ or at $v = O_{\kappa, \kappa_r,\mu_1,\mu}(1)$ (depending on the sign of $u$).
    In either case, we find that there is a constant $C_{\kappa, \kappa_r,\mu_1,\mu}$ such that
    $$ g_r(u) \leq C_{\kappa, \kappa_r,\mu_1,\mu} e^{(\kappa - \kappa_r) |u| + (\mu - \mu_1 (\D_{s, r^*}t)^{-1/2})u_{-}^{3/2} - \mu u_{+}^{3/2} + \mu_2 (\D_{s, r^*}t)^{-1/3} u_{+}},$$
    which is bounded and integrable. The lemma follows.  
\end{proof}

\begin{prop} \label{prop:L12goodness}
    The kernels $\Upsilon^{-1} L^{(1)}_T \Upsilon$ and $\Upsilon^{-1} L^{(2)}_T \Upsilon$ are good if $\mu$ is chosen to satisfy $0 < \mu < \mu_1 t_p^{-1/2}$.
\end{prop}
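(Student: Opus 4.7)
The plan is to adapt the argument of Proposition \ref{prop:L0goodness} to each basic kernel appearing in $L^{(1)}$ and $L^{(2)}$, namely $L[{\scriptstyle k,k \vt \emptyset}]$, $L[{\scriptstyle p \vt p}]$, $L^{\veps}[{\scriptstyle k_1 \vt (k_1,k_2]}]$ and $L^{\veps}[{\scriptstyle k_1,k_2 \vt (k_1,k_2]}]$. Since the scalar coefficients $\Theta(r \vt k)$ and $\theta(r \vt \veps)$ are uniformly bounded (they depend only on $\mathbold{\theta}$, which ranges over a compact contour), and the number of summands in the definitions of $L^{(1)}$, $L^{(2)}$ is finite and independent of $T$, it suffices to produce a bound $|\Upsilon^{-1} M_T \Upsilon(r,u;s,v)| \leq g_r(u)g_s(v)$ for each of these four families separately, with $g_r$ bounded and integrable, and to sum the resulting bounds.

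For each basic kernel I would parametrize the circular $\zeta$-contours around $0$ by the steepest descent paths used in \cite{JR1}, so that \eqref{eqn:globaldecaydroplet} applies and yields a Gaussian factor $e^{-C\sigma^2}$ in each arc-length parameter together with a $\Psi$-factor in the associated scaled spatial coordinate. For the kernels from $L^{(2)}$ I would additionally parametrize the $z$-contours around $1$ (radii in $(q,\sqrt{q})$, ordered compatibly with $\veps$) by the dual steepest descent paths; the analogous decay estimate for $|G(z(\sigma) \vt n,m,a)|$ on these contours is established in \cite{JR1} as part of the proof of Proposition \ref{prop:oldlimits}, and after rescaling it produces Gaussian decay in each $z$-parameter together with the appropriate $\Psi$-type contribution. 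The Gaussian factors integrate to an absolute constant, leaving a bound of the form
\begin{equation*}
|\Upsilon^{-1} M_T \Upsilon(r,u;s,v)| \leq C\, e^{-\kappa_r|u| - \mu\,\mathrm{sgn}(u)|u|^{3/2} + \kappa_s|v| + \mu\,\mathrm{sgn}(v)|v|^{3/2} + \sum_j \Psi(y_j)},
\end{equation*}
where each $y_j$ is a scaled combination of $u$, $v$, and the contour parameters already integrated out. Arguing exactly as in Proposition \ref{prop:L0goodness}, the assumption $0 < \mu < \mu_1 t_p^{-1/2}$ ensures the $\Upsilon$-conjugation terms are strictly dominated by the $-\mu_1 (\D t)^{-1/2}(v-u)_{+}^{3/2}$ contributions inside $\Psi$; choosing $\kappa \in (\kappa_s,\kappa_r)$ then splits off a $v$-only factor $e^{(\kappa_s - \kappa)|v|}$, and maximizing the remainder over $v$ produces a bounded integrable function of $u$.

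The main obstacle is the block-index case analysis: whether $r$ or $s$ equals $p$ determines whether the corresponding argument is positive or negative, which flips the signs of the relevant terms in the $\Upsilon$- and $\Psi$-exponents. In each configuration one must verify that the $\mu|u|^{3/2}$ and $\mu|v|^{3/2}$ contributions from $\Upsilon$ are absorbed by matching $\mu_1 u_{+}^{3/2}$ or $\mu_1 v_{-}^{3/2}$ contributions from $\Psi$ -- exactly the balance exploited in Proposition \ref{prop:L0goodness}. A secondary subtlety is that the various $G$ factors in $L^{(2)}$ involve several distinct time-increments $\D_{k_1,r^{*}} t$, $\D_{s^{*},k_2} t$ and $\D_k t$, but all are bounded above by $t_p$, so the single global choice $\mu < \mu_1 t_p^{-1/2}$ suffices uniformly across all basic kernels and all block-index combinations.
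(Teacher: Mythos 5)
Your reduction to the four basic families and your appeal to the steepest--descent estimates of \cite{JR1} are the right skeleton, but the mechanism you use to absorb the conjugation is modelled on the wrong kernel, and the step would fail for several blocks. Proposition 5.1 of \cite{JR1} gives the bound for each of $L[{\scriptstyle k,k \vt \emptyset}]$, $L[{\scriptstyle p \vt p}]$, $L^{\veps}[{\scriptstyle k_1 \vt (k_1,k_2]}]$ and $L^{\veps}[{\scriptstyle k_1,k_2 \vt (k_1,k_2]}]$ already in \emph{product} form, $|L_T(r,u;s,v)| \leq e^{\Psi(u/\Delta_1)+\Psi(-v/\Delta_2)}$, with $u$ and $v$ entering through \emph{separate} $\Psi$--arguments. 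This is a structural consequence of the fact that, unlike $L[{\scriptstyle 0}]$ (where the single variable $\zeta$ carries the exponent $i-j+1$), in these kernels the $i$-- and $j$--dependence sit in distinct $G$--factors attached to distinct integration variables. Your claimed bound with a term $-\mu_1(\D t)^{-1/2}(v-u)_{+}^{3/2}$ coupling $u$ and $v$ does not arise here, so the $L[{\scriptstyle 0}]$--style manoeuvre of choosing $\kappa\in(\kappa_s,\kappa_r)$ and taking a supremum over $v$ has nothing to act on.

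More seriously, that manoeuvre is not even available: it requires $\kappa_s<\kappa_r$, which for $L[{\scriptstyle 0}]$ is guaranteed by the indicator $\ind{s<r^*}$, but the indicators for the $L^{(2)}$ kernels (e.g.\ $\ind{r=p}$ for $L[{\scriptstyle p \vt p}]$, or $\ind{k_1<r^*,\,s^*<k_2}$ for $L^{\veps}[{\scriptstyle k_1,k_2\vt(k_1,k_2]}]$) permit $s\geq r$, in which case $(\kappa_s,\kappa_r)$ is empty. Note the proposition deliberately does not assume the $\kappa_r$ are increasing, precisely because no such ordering is needed here. The correct and simpler conclusion is to multiply the product bound by the conjugation and factor immediately: take $g_r(u)=e^{-\kappa_r|u|-\mu\,\mathrm{sgn}(u)|u|^{3/2}+\Psi(u/\Delta_1)}$ and $g_s(v)=e^{\kappa_s|v|+\mu\,\mathrm{sgn}(v)|v|^{3/2}+\Psi(-v/\Delta_2)}$. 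Each is bounded and integrable on its own: in the direction where $\Upsilon$ contributes growth, the exponent is $(\mu-\mu_1\Delta^{-1/2})(\cdot)^{3/2}$ plus linear terms, and $\mu<\mu_1 t_p^{-1/2}\leq\mu_1\Delta^{-1/2}$ makes the $3/2$--power coefficient negative, which dominates the linear contributions $\kappa|\cdot|$ and $\mu_2\Delta^{-1/3}(\cdot)_{+}$; in the other direction one simply has $-\mu|\cdot|^{3/2}$ plus linear terms. Your final paragraph gestures at this absorption, but the argument as written routes it through a splitting that cannot be performed.
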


\begin{proof}
    It is enough to show that $\Upsilon^{-1} L_T \Upsilon$ are good kernels for $L = L^{\veps}[{\scriptstyle k_1, k_2 \vt (k_1, k_2]}], L^{\veps}[{\scriptstyle k_1 \vt (k_1, k_2]}], L[{\scriptstyle k,k \vt \emptyset}]$ or $L[{\scriptstyle p \vt p}]$. For each of these 4 matrices, Proposition 5.1 on \cite{JR1} proves the following
    bound:
    $$ |L_T(r,u;s,v)| \leq e^{ \Psi(u/\Delta_1) + \Psi(-v/\Delta_2)}$$
    were $\Delta_1 = (\D_{a,b} t)^{-1/3}$ and $\Delta_2 = (\D_{c,d} t)^{-1/3}$ for certain indices $a < b$ and $c<d$ that depend on $r,s$ as well as $k_1,k_2$.
    As a result we find that
    $$ |\Upsilon^{-1} L_T \Upsilon(r,u;s,v)| \leq g_r(u) g_r(v)$$
    where
    $$g_r(u) = e^{-\mu \mathrm{sgn}(u) |u|^{3/2} - \mu_1 \Delta_1^{-1/2}u_-^{3/2} + \mu_2 \Delta_1^{-1/3}u_+ - \kappa_r |u|} =
    e^{(\mu - \mu_1 \Delta_1^{-1/2})u_-^{3/2} -\mu u_+^{3/2} + \mu_2 \Delta_1^{-1/3}u_+ - \kappa_r|u|},$$
    and
    $$g_s(v) = e^{\mu \mathrm{sgn}(v) |v|^{3/2} - \mu_1 \Delta_2^{-1/2}(-v)_-^{3/2} + \mu_2 \Delta_2^{-1/3}(-v)_+ + \kappa_s |v|} =
    e^{(\mu - \mu_1 \Delta_2^{-1/2})v_+^{3/2} -\mu v_-^{3/2} + \mu_2 \Delta_2^{-1/3}v_- + \kappa_s|v|}.$$
    Observe that $\mu_1 \D_i^{-1/2} \geq \mu_1 t_p^{-1/2}$, so $\mu - \mu_1 \D_i^{-1/2} < 0$. Also, $\D_i > 0$. As a result, $g_r$ and $g_s$ are both bounded and integrable.
\end{proof}

Propositions \ref{prop:L0goodness} and \ref{prop:L12goodness} combine to complete the proof of Theorem \ref{thm:droplet}.

\begin{rem} \label{rem:otherconj}
    One may use other conjugation factors just as effectively. Another valid choice would be to have $\Upsilon f(r,u) = (1+ |u|)^{\kappa_r} e^{\mu u}$,
    where $\mu$ is sufficiently large in terms of the parameters $(x_i,t_i)$ and the constants $\kappa_r$ satisfy $\kappa_r - \kappa_s > 2$ for every $r>s$.
    The proofs of Propositions \ref{prop:L0goodness} and \ref{prop:L12goodness} go through in the same manner.
\end{rem}

\section{A limit theorem for PNG with boundary sources} \label{sec:limit}
Let $T$ be a large scaling parameter. For $0 < q < 1$, recall the constants \eqref{scalingconstants} and \eqref{eqn:scaling2}. Consider the geometric PNG model \eqref{eqn:G} with the following choice of parameters:
\begin{equation} \label{eqn:albe}
a_1 = 1-\frac{\alpha}{c_3} T^{-1/3}, \; b_1 = 1 - \frac{\beta}{c_3} T^{-1/3}, \; a_i = b_j = \sqrt{q} \quad \text{for}\; i,j > 1.
\end{equation}
The parameters $\alpha$ and $\beta$ are positive and fixed. One views this choice of parameters as the homogeneous PNG model with sources at the boundary corresponding to $\alpha$ and $\beta$. The weights along the first row and column of the quadrant are heavier than the bulk, and the optimal path $\pi$ whose weight attains the value $\Gb(T,T)$ will spend an order $T^{2/3}$ many steps on the boundary before venturing into the bulk. The choices of $a_1$ and $b_1$ are made in a  manner so that the weight $\omega(1,1)$ survives in the upcoming scaling limit to become an Exponential random variable of rate $\alpha + \beta$.
This and similar models have been studied in \cite{BP,IS, PS}.

We present the two-time law of this model under KPZ scaling via its cumulative distribution function.
In terms of the KPZ fixed point, this presents the law of the pair $(X_1 + \omega, X_2 + \omega)$, where $\omega \sim \mathrm{Exp}(\alpha + \beta)$ is independent of $(X_1,X_2)$ and $X_i = \mathcal{L}(h_0; (x_i,t_i))$ with
$\mathcal{L}(h_0;(x,t))$ being the KPZ fixed point with the random initial condition
$$h_0(y) = B(y) -\alpha(y)_{-} - \beta (y)_{+}.$$
Here $B$ is a two-sided Brownian motion with diffusivity 2 ($\E{B(y)^2} = 2|y|$). 
In the sequel we will remove the weight $\omega$ to get a formula for the law of $(X_1,X_2)$, and then take the limit $\alpha,\beta \to 0$ to get the two-time law of the KPZ fixed point with Brownian initial condition.

The two-time distribution function of the geometric PNG model has been derived in \cite{JR2}. We quote that theorem.
\begin{thm} \label{thm:0}
The two-time distribution function with general parameters $a_i,b_j$ and $n<N, m<M$ is given by
$$\pr{\Gb(m,n) \leq h, \Gb(M,N) \leq H} = \frac{1}{2 \pi \mathbold{i}} \oint \limits_{|\theta| = s > 1}
\frac{\dt{I + \theta^{\ind{i > n}} F_1 + \theta^{-\ind{i \leq n}} F_2}}{\theta - 1}.$$
These matrices are sums, $F_1 = J_1 - J_2 + J_3$ and $F_2 = J_2 - J_1 - J_4$, with the $J$s given by the following formulas.

For $z \in \C$, $h \in \Z$ and subsets $S \subset [N]$ and $T \subset [M]$, define
\begin{equation} \label{eqn:GST}
	G(z \vt S, T, h) = z^{h} \prod_{k \in S} (z - 1/b_k) \prod_{k \in T} (1 - a_k/z)^{-1}.
\end{equation}

Recall that $\gamma$ is a circular contour, oriented counter-clockwise.
Recall also that we use the notation $\oint_{\gamma}$ to mean $(2 \pi \mathbold{i})^{-1} \oint_{\gamma}$.

\begin{align*}
J_1(i,j) &= \ind{j \leq n} \, \oint_{\gamma_b} d \zeta \, \oint_{\gamma_a} dz \
\frac{G (z \vt [j-1], [m], h-1)}{G (\zeta \vt [i], [m], h-1 )\, (z-\zeta)}\\
J_2(i,j) & = \ind{i > n} \, \oint_{\gamma_b} d\omega \, \oint_{\gamma_a} dw \,
\frac{G(w \vt [N] \setminus [i], [M] \setminus [m], H-h)}{G(\omega \vt [N]\setminus [j-1], [M] \setminus [m], H-h) \, (w-\omega)}\\
J_3(i,j) & = \oint_{\gamma_b} d\zeta \, \oint_{\gamma_b} d \omega \, \oint_{\gamma_a} dz\,  \oint_{\gamma'_a} dw  \\
& \frac{G(z \vt [n],[m], h-1) G(w \vt [N]\setminus [n], [M] \setminus [m], H-h)}
{G(\zeta \vt [i],[m], h) \, G(\omega \vt [N]\setminus [j-1], [M] \setminus [m], H-h)\,(z-\zeta)(w-\omega)(z-w)}
\end{align*}
The contour $\gamma_b$ encloses only the poles at every $1/b_k$. The contours $\gamma_a$ and $\gamma'_a$ enclose only
the poles at every $a_k$. In $J_3$, $\gamma_a$ contains $\gamma'_a$ (so $|z| > |w|$).

The matrix $J_4$ looks the same as $J_3$ except the $z$ and $w$ contours are reversed
so that $\gamma'_a$ contains $\gamma_a$ (so $|w| > |z|$).
\end{thm}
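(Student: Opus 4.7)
\textbf{Proof proposal for Theorem \ref{thm:0}.}

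The plan is to derive the formula by combining a Schur-process representation of geometric PNG with a contour-integral mechanism that couples the two time slices. Via RSK, the pair $(\Gb(m,n), \Gb(M,N))$ is equal in law to the pair of first rows $(\lambda_1, \mu_1)$ of two nested partitions $\lambda \subset \mu$ sampled from a Schur-type measure whose parameters are built from $a_1,\ldots,a_N$ and $b_1,\ldots,b_M$. Hence the joint CDF on the left-hand side is a gap probability $\pr{\lambda_1 \leq h,\, \mu_1 \leq H}$ for the associated extended determinantal point process, whose correlation kernel can be computed via Eynard--Mehta in terms of biorthogonal functions. For the Schur measure with parameters as above, the biorthogonal functions are precisely contour integrals of the generating function $G(z \vt S,T,\cdot)$ defined in \eqref{eqn:GST}, with the subset arguments $S, T$ recording which row and column parameters contribute.

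Next, I would introduce the auxiliary parameter $\theta$ by the same generating-function mechanism behind Johansson's two-time formula. The idea is to write
$$\pr{\lambda_1 \leq h,\, \mu_1 \leq H} = \frac{1}{2 \pi \mathbold{i}} \oint_{|\theta|=s>1} \frac{d\theta}{\theta - 1}\, \Phi(\theta),$$
where $\Phi(\theta)$ is a $\theta$-weighted Fredholm determinant in which configurations in the ``future" columns $i > n$ carry weight $\theta$ and those in the ``past" columns $i \leq n$ carry weight $\theta^{-1}$. This $\theta$-weighting splits the extended two-time kernel into four natural blocks --- ``past-past", ``past-future", ``future-past" and ``future-future" --- which become the matrices $J_1, J_2, J_3, J_4$, respectively. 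The alternating signs in $F_1 = J_1 - J_2 + J_3$ and $F_2 = J_2 - J_1 - J_4$ will then arise from inclusion-exclusion between the contributions of the two time-slices, while the contour reversal between $J_3$ ($|z| > |w|$) and $J_4$ ($|w| > |z|$) reflects the order in which poles are swept when analytically continuing a kernel across the cut $i = n$.

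Having set this up, the derivation then reduces to residue calculus: identifying each $J_k$ with the claimed contour integral by computing the biorthogonal functions explicitly and tracking which boundary segment of the lattice contributes to which block. The specific choices $S = [i]$, $[j-1]$, $[N]\setminus[i]$, $[N] \setminus [j-1]$ record whether the kernel entry labels a row in the ``early" block (columns through $n$) or the ``late" block (columns from $n+1$ to $N$), and similarly for $T$ with the rows indexed by $m, M$. The weighting structure $\theta^{\ind{i > n}}$ and $\theta^{-\ind{i \leq n}}$ then emerges directly as a block-diagonal multiplier on this decomposition.

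The main obstacles will be twofold. First is the bookkeeping of signs, contour choices and subsets in the $G$-functions: it requires systematic residue calculations to move poles between $\gamma_a$ and $\gamma_b$ and to correctly identify the reversal between $J_3$ and $J_4$ when interpolating from past to future. Second is the analytic side: one must verify uniform-in-$\theta$ trace-class bounds on $|\theta|=s$ so that the Fredholm series expansion converges absolutely and may be interchanged with the $\theta$-integral. This is the step that fixes the admissible radii $s>1$ and justifies the contour integral representation as a genuine probability.
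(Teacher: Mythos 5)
The statement you are trying to prove is not proved in this paper at all: it is quoted verbatim from \cite{JR2} (``On inhomogeneous polynuclear growth''), so the only ``proof'' here is a citation. Your proposal therefore has to stand on its own, and as written it has a genuine gap at its very first step. You assert that, via RSK, the pair $(\Gb(m,n),\Gb(M,N))$ is the pair of first rows of two nested partitions under a Schur-type measure, so that the joint CDF is a gap probability of an extended determinantal point process computable by Eynard--Mehta. This is precisely what fails for \emph{time-like} separated points: the Schur process / Eynard--Mehta machinery gives joint laws of last passage values along space-like (down-right) paths, whereas here $m<M$ \emph{and} $n<N$, so the two points do not lie on any common space-like path and the pair is not a marginal of a determinantal (extended kernel) structure in any known way. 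This is exactly the obstruction that made two-time and multi-time laws hard and that \cite{JoTwo1, JoTwo2, JR2} were written to overcome; a proof that starts by assuming it away has no content at the crucial point.

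A related symptom: you describe the $\theta$-integral as a weighting in which ``future'' rows carry $\theta$ and ``past'' rows carry $\theta^{-1}$ inside a single extended kernel. If the structure really were a determinantal kernel conjugated by such row weights, the determinant would be independent of $\theta$ and the $\oint d\theta/(\theta-1)$ would be vacuous. In the actual formula the $\theta$-dependence multiplies two \emph{different} kernels, $\theta^{\ind{i>n}}F_1+\theta^{-\ind{i\le n}}F_2$, and in the derivation of \cite{JR2} (following Johansson's two-time method) $\theta$ enters through a generating-function/geometric-series device used to resolve the summation over the intermediate configuration at the first time slice --- one conditions on the RSK multilayer data at $(m,n)$, writes the transition to $(M,N)$ via determinantal (LGV/Schur) transition weights, and performs the sum over the intermediate data with the ordering constraint encoded by the $\theta$-contour and the $1/(\theta-1)$ factor; the four kernels $J_1,\dots,J_4$, their signs, and the $|z|>|w|$ versus $|w|>|z|$ contour orderings all emerge from that resummation, not from a block decomposition of a pre-existing two-time kernel. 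Your plan's remaining steps (residue bookkeeping, trace-class bounds) are sensible in themselves, but they are downstream of a structural identity you have not established, so the proposal does not yet constitute a proof strategy for Theorem \ref{thm:0}.
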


Take the geometric PNG model with choice of parameters \eqref{eqn:albe}.
Write $n,N, m, M, h$ and $H$ according to the following scaling. Consider
temporal parameters $0 < t_1 < t_2$, spatial parameters $x_1, x_2 \in \R$ and height parameters $\xi_1, \xi_2 \in \R$.
For a choice of these, set (ignoring rounding)
\begin{align} \label{kpzscaling}
& n = t_1 T - c_1 x_1 (t_1T)^{2/3} & N = t_2 T - c_1 x_2(t_2T)^{2/3} \\ \nonumber
& m = t_1 T + c_1 x_1 (t_1T)^{2/3} & M = t_2 T + c_1 x_2(t_2T)^{2/3} \\ \nonumber
& h = c_2 (t_1 T) + c_3 \xi_1(t_1T)^{1/3} & H = c_2 (t_2 T) + c_3 \xi_2(t_2 T)^{1/3} .
\end{align}

Introduce the notation $\D n = N-n$, $\D m = M-m$ and $\D h = H-h$. Recall \eqref{eqn:deltas}.
It holds that $\D n = \D t T - c_1 \D x(\D tT)^{2/3}$ and likewise for $\D m$ and $\D h$.

\subsection{Statement of the limit theorem}
Recall the function $\G$ from \eqref{Gtxx}.
Let $d_1, d_2, D_1$ and $D_2$ be positive real numbers such that
$$ d_1, d_2 <  \beta; \quad D_1, D_2 < \alpha.$$
Denote by $\Gamma_d$ the vertical contour crossing the real axis at $d$ and oriented upwards.
Let $\mu$ be a sufficiently large scalar that will be used in a conjugation factor.

Define the following four kernels over $L^2(\R)$.
\begin{align} \label{eqn:Js} \nonumber
& \J_{1}(u,v) = e^{\mu(v-u)} \, \ind{v \leq 0}\,  \oint \limits_{\Gamma_{-d_1}} d \zeta \oint \limits_{\Gamma_{D_1}} dz \\ \nonumber
& \qquad \frac{\G(z \vt t_1, x_1, \xi_1) e^{zv-\zeta u}}{\G(\zeta \vt t_1, x_1, \xi_1)} \frac{(\zeta-\alpha)(z+\beta)}{(z-\zeta)(z-\alpha)(\zeta+\beta)}\\ \nonumber
& \J_{2}(u,v) = e^{\mu(v-u)} \, \ind{u > 0}\,  \oint \limits_{\Gamma_{-d_1}} d \zeta \oint \limits_{\Gamma_{D_1}} dz\, 
\frac{\G(z \vt \D t, \D x, \D \xi) e^{\zeta v - zu}}{\G(\zeta \vt \D t, \D x, \D \xi) (z-\zeta)} \\ \nonumber
& \J_{3}(u,v) =  \; e^{\mu(v-u)} \, \oint \limits_{\Gamma_{-d_1}} d \zeta \oint \limits_{\Gamma_{-d_2}} d \omega 
\oint \limits_{\Gamma_{D_1}} dz  \oint \limits_{\Gamma_{D_2}} dw \\ \nonumber
& \qquad \frac{\G(z \vt t_1, x_1, \xi_1) \, \G(w \vt \D t, \D x, \D \xi)}{\G(\zeta \vt t_1, x_1, \xi_1) \, \G(\omega \vt \D t, \D x, \D \xi)} \cdot
\frac{e^{\omega v - \zeta u} (\zeta-\alpha)(z+\beta)}{(z-\alpha)(\zeta+\beta)(z-\zeta)(w-\omega)(z-w)}. \\ \nonumber
& \J_{4}(u,v) =  \; e^{\mu(v-u)} \, \oint \limits_{\Gamma_{-d_1}} d \zeta \oint \limits_{\Gamma_{-d_2}} d \omega 
\oint \limits_{\Gamma_{D_1}} dz  \oint \limits_{\Gamma_{D_2}} dw \\ \nonumber
& \qquad \frac{\G(z \vt t_1, x_1, \xi_1) \, \G(w \vt \D t, \D x, \D \xi)}{\G(\zeta \vt t_1, x_1, \xi_1) \, \G(\omega \vt \D t, \D x, \D \xi)} \cdot
\frac{e^{\omega v - \zeta u} (\zeta-\alpha)(z+\beta)}{(z-\alpha)(\zeta+\beta) (z-\zeta)(w-\omega)(z-w)}.
\end{align}
In $\J_3$, $D_1 < D_2$, that is, the $z$-contour is to the left of the $w$-contour. In $J_4$, $D_1 > D_2$, so that the ordering of the contours is reversed.
The kernels are of trace class if $\mu$ is sufficiently large in terms of $x_1,x_2,t_1$ and $t_2$.

\begin{thm} \label{thm:1}
	Consider the two-time distribution $\pr{G(m,n) < h, G(M,N) < H}$ for the geometric PNG model
	with choice of parameters \eqref{eqn:albe}
	Assume that $n,m,h,N,M,H$ are given by \eqref{kpzscaling}. Then in the limit as $T$ tends to infinity,
	the two time distribution function converges to
	$$ \frac{1}{2 \pi \mathbold{i}} \oint \limits_{|\theta| = r} \frac{\dt{I + F(\theta)}_{L^2(\R)}}{\theta - 1}$$
	where $r > 1$ and
	$$F(\theta)(u,v) = \theta^{\ind{u > 0}} F_{1}(u,v) + \theta^{- \ind{u \leq 0}} F_{2}(u,v).$$
	The kernels $F_{1}$ and $F_{2}$ are given by
	\begin{equation*}
	F_{1} = \J_2 - \J_1  + \J_3 \quad \text{and} \quad F_{2} = \J_1 - \J_2 - \J_4\,.
	\end{equation*}
\end{thm}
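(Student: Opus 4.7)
The plan is to derive Theorem \ref{thm:1} by taking the large-$T$ limit of the exact formula from Theorem \ref{thm:0} under the scaling \eqref{kpzscaling} with boundary weights \eqref{eqn:albe}, following the same scheme as in Section \ref{sec:droplet}. Specifically, I would embed each matrix $J_1, J_2, J_3, J_4$ as an integral kernel on $L^2(\R)$ via the rescaling $i = n + \lceil \nu_T u \rceil$ with $\nu_T = c_0 T^{1/3}$, so that the indicators $\ind{i > n}$ and $\ind{j \leq n}$ turn into $\ind{u > 0}$ and $\ind{v \leq 0}$, and $\dt{I + J}_{N \times N} = \dt{I + \nu_T \tilde J}_{L^2(\R)}$. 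The objective is then to show that the rescaled kernels $\nu_T \tilde J_i^T$ converge to $\J_i$ in the \emph{good and convergent} sense of Definition \ref{kerneldef}, uniformly in $\theta$ on $|\theta| = r > 1$, so that the $\theta$-contour integral also passes to the limit.

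The asymptotic analysis differs from the droplet case in a crucial way. The bulk part of $G(z \vt [n], [m], h) = z^{h+m}(z - 1/\sqrt q)^n (z - \sqrt q)^{-m}$ coming from Theorem \ref{thm:0} has an Airy-type critical point at $z_c = 1$, not at $w_c = 1 - \sqrt q$. Indeed, under \eqref{kpzscaling} the logarithmic derivative $\partial_z \log G$ factors proportionally to $(z - 1)^2$ at leading order in $T$, so both the first and second derivatives vanish at $z_c$. Introducing the local coordinate $\tilde z = c_3 T^{1/3}(1 - z)$ and expanding $\log G$ to third order around $z_c$ reproduces exactly the function $\G(\tilde z \vt t_1, x_1, \xi_1)$ of \eqref{Gtxx}, and similarly $\G(\tilde w \vt \D t, \D x, \D \xi)$ for the second-time factor involving $[N]\setminus [n]$, $[M]\setminus [m]$, and $H - h$. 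The contours $\gamma_a, \gamma_b$ are deformed through the saddle at $z_c = 1$, becoming the vertical contours $\Gamma_{\pm d_i}, \Gamma_{D_i}$ in the $\tilde z$ coordinate.

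The key new ingredient is the boundary contribution. The factor $(z - 1/b_1)(1 - a_1/z)^{-1}$ appears in $G(z \vt S, T, \cdot)$ exactly when $1 \in S \cap T$; this occurs in the $G$-numerators and $G$-denominators appearing in $J_1, J_3, J_4$ but not in $J_2$, whose index sets $[N]\setminus [i]$ and $[M]\setminus [m]$ never contain $k = 1$. The fortunate coincidence is that the boundary poles $a_1 = 1 - \alpha c_3^{-1} T^{-1/3}$ and $1/b_1 = 1 + \beta c_3^{-1} T^{-1/3} + O(T^{-2/3})$ lie at distance $O(T^{-1/3})$ from the saddle $z_c = 1$, so under the rescaling $\tilde z = c_3 T^{1/3}(1 - z)$ they converge to the finite points $\tilde z = \alpha$ and $\tilde z = -\beta$. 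Using $(1 - a_1/z)^{-1} = z/(z - a_1)$ with $z \to 1$, a direct computation gives $(z - 1/b_1)(1 - a_1/z)^{-1} \to (\tilde z + \beta)/(\tilde z - \alpha)$ in the $z$-integrand, and the reciprocal factor from the $\zeta$-integrand (which sits in the denominator of the full integrand) produces $(\tilde \zeta - \alpha)/(\tilde \zeta + \beta)$. Their product is exactly the rational prefactor appearing in $\J_1, \J_3, \J_4$.

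Finally, goodness estimates on the rescaled kernels are obtained in the style of Propositions \ref{prop:L0goodness} and \ref{prop:L12goodness}. The conjugation factor $e^{\mu(v - u)}$ built into the definition of $\J_i$ plays the role of $\Upsilon$ from the droplet case: combined with the Gaussian decay of $\G(D + iy \vt \cdot)$ as $|y| \to \infty$ along the vertical contours and with the exponentials $e^{zv - \zeta u}$, it produces $L^1 \cap L^\infty$ majorants $f(u) g(v)$ uniform in $T$ and in $\theta$ on $|\theta| = r$, provided $\mu$ is sufficiently large in terms of $x_i, t_i$. Dominated convergence on the Fredholm series, together with Hadamard's inequality, yields $\dt{I + \nu_T \tilde L(\theta)} \to \dt{I + F(\theta)}_{L^2(\R)}$ uniformly in $\theta$, after which the $\theta$-contour integral passes to the limit. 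The principal technical obstacle I anticipate is justifying the global contour deformation: the original $\gamma_a, \gamma_b$ of Theorem \ref{thm:0} are small circles around $\{\sqrt q, a_1\}$ and $\{1/\sqrt q, 1/b_1\}$, and to match the vertical contours $\Gamma_{\pm d_i}, \Gamma_{D_i}$ near the saddle $z_c = 1$ one must deform without crossing the Cauchy pole at $z = \zeta$ and so as to remain uniformly steep-descent for all four $\J_i$ kernels simultaneously, a book-keeping task that is delicate but conceptually parallel to the contour arrangement already employed in \cite{JR2}.
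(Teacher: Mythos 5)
Your overall strategy is the one the paper follows: start from Theorem \ref{thm:0}, embed and rescale the kernels on $L^2(\R)$, run a saddle-point analysis at the Airy-type critical point (which is indeed $z_c=1$ in the original variable; the paper first substitutes $z\mapsto q^{-1/2}(1-z)$ so that the critical point becomes $w_c=1-\sqrt q$, but this is cosmetic), track the boundary poles at $a_1$ and $1/b_1$ into the rational prefactors $(\zeta-\alpha)(z+\beta)/[(z-\alpha)(\zeta+\beta)]$, and use the conjugation $e^{\mu(v-u)}$ plus dominated convergence for the Fredholm series. Your ``principal technical obstacle'' (global steep descent) is resolved in the paper not by a literal deformation to vertical lines but by choosing explicit circular contours $\gamma_0,\gamma_1$ through the saddle and invoking the global decay estimates of Lemma 5.3 of \cite{JR1}; only in the limit do the contours become the vertical lines $\Gamma_{\pm d},\Gamma_D$.

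There is, however, one concrete gap: your claim that each rescaled $J_i$ converges to $\J_i$ is false for $i=1,2$, and as stated your argument would prove the theorem with the wrong signs. The circular contour enclosing the $a_k$-poles passes the saddle $z_c=1$ from the left and, in the local coordinate $\tilde z=c_3T^{1/3}(1-z)$ (which involves the reflection $z\mapsto 1-z$), it emerges \emph{downward}-oriented; reorienting it upward to match $\Gamma_{D_1}$ costs a factor $(-1)$. The kernels $J_1$ and $J_2$ contain exactly one such contour, so $J_1\to-\J_1$ and $J_2\to-\J_2$, whereas $J_3$ and $J_4$ contain two ($z$ and $w$) and pick up $(-1)^2=1$. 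This is precisely what converts the prelimit combination $F_1=J_1-J_2+J_3$ of Theorem \ref{thm:0} into the limit $F_1=\J_2-\J_1+\J_3$ of Theorem \ref{thm:1} (and likewise for $F_2$). Your proposal never reconciles these two sign patterns: if $J_i\to\J_i$ held for all $i$, the limit would be $\J_1-\J_2+\J_3$, contradicting the statement you are proving. You need to track the orientation of each limiting contour kernel by kernel and record the resulting signs.
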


\subsection{Proof of the theorem}
The proof follows from Theorem \ref{thm:0} by a saddle point analysis of the determinantal kernels. 
We must show that the matrices $F_1$ and $F_2$ from Theorem \ref{thm:0}, under KPZ-scaling
\eqref{eqn:Fkpz} with the parameters scaled according to \eqref{eqn:albe}, converge to the corresponding matrices $F_1$ and $F_2$
from Theorem \ref{thm:1}.

\subsubsection{Embedding}
Embed an $N \times N$ matrix $M$ (where $n$ and $N$ are the parameters from the two-time distribution)
as a kernel over $L^2(\R)$ by the formula
$$M \mapsto F(u,v) = M(n + \lceil u \rceil, n + \lceil v \rceil)$$
where $u, v \in \R$. Set $F(u,v)$ to be zero when $n + \lceil u \rceil$ or $n + \lceil v \rceil$ lie outside the set $[N]$.
Then, it follows readily that
$$\dt{I + M}_{N \times N} = \dt{I + F}_{L^2(\R)},$$
where the latter determinant should be taken as the Fredholm series expansion of $F$.
The KPZ re-scaled kernel is defined to be
\begin{equation} \label{eqn:Fkpz}
F_T(u,v) = \nu_T \cdot F(\nu_T u, \, \nu_T v) \quad \text{with}\; \nu_T = c_0 T^{1/3}.
\end{equation}
Note that $\dt{I+F}_{L^2(\R)}$ equals $\dt{I+F_T}_{L^2(\R)}$.
The matrices $F_1$ and $F_2$ from Theorem \ref{thm:0} will be considered under the scaling \eqref{eqn:Fkpz}.

It suffices to show that the matrices $J_i$ are good and converge to their respective limits $\J_i$ (according to Definition \eqref{kerneldef}.
We will carry out the procedures above for the matrix $J_3$ and show that it converges to $\J_3$.
The steps are similar for the other $J$ matrices; $J_1$ converges to $-\J_1$, $J_2$
converges to $-\J_2$ and $J_4$ converges to $\J_4$.
We will omit these for brevity.

\subsubsection{Descent contours}
Consider circular contours $\gamma_0$, around 0, and $\gamma_1$, around 1, as contours for the integration variables $\zeta, z, \omega, w$.
Firstly, define
\begin{equation} \label{eqn:wc}
w_c = 1- \sqrt{q},
\end{equation}
which is the critical point around which asymptotics will be performed. Now, for a (large) parameter $K$, define
\begin{align} \label{eqn:contours}
\gamma_0 &= \gamma_0(\sigma, d) = w_c(1- \frac{d}{K^{1/3}}) e^{\mathbold{i} \sigma K^{-1/3}} \quad |\sigma| \leq \pi K^{1/3}, \\ \nonumber
\gamma_1 &= \gamma_1(\sigma, d) = 1 - \sqrt{q}(1- \frac{d}{K^{1/3}}) e^{\mathbold{i} \sigma K^{-1/3}} \quad |\sigma| \leq \pi K^{1/3}.
\end{align}
The parameter $d$ should satisfy $0 < d < K^{1/3}$. Observe that if $\sigma$ remains bounded independently of $K$ then
one has the expansions
$$ \gamma_0(\sigma, d) = w_c + w_c\frac{(\mathbold{i} \sigma - d)}{K^{1/3}} + O(K^{-2/3}), \quad
\gamma_1(\sigma, d) = w_c + \sqrt{q} \frac{(-\mathbold{i}\sigma + d)}{K^{1/3}} + O(K^{-2/3}).$$
So, locally around $\sigma = 0$, the contours are vertical lines.
\smallskip

\subsubsection{Re-expressing the matrix $J_3$}
Define the function
\begin{equation} \label{eqn:gstar}
    G^*(z \mid n,m,h) = z^n (1-z)^{m+h} \big (1 - \frac{z}{1-q}\big )^{-m}
\end{equation}
Define also
\begin{equation} \label{eqn:GG}
    G(z \mid n,m,h) = \frac{G^*(z \mid n,m,h)}{G^*(w_c\mid n,m,h)}
\end{equation}
which is $G^*$ normalized around the critical point.

In the contour integral defining the kernel $J_3$ in Theorem \ref{thm:0},
make the change of variables $z \mapsto q^{-1/2}(1-z)$ and same for the other variables $\zeta, \omega, w$.
After some bookkeeping, the matrix $J_3$ looks as follows.

\begin{align*}
    J_3(i,j) &= (-q)^{(j-i)/2} \oint_{\gamma_1} d\zeta \, \oint_{\gamma_2} d \omega \, \oint_{\gamma_3} dz\,  \oint_{\gamma_4} dw  \\
& \frac{G^*(z \vt n-1,m-1, h-1) G^*(w \vt \D n, \D m, \D h)}
{G^*(\zeta \vt i-1, m-1, h-1) \, G^*(\omega \vt N-j+1, \D m, \D h)} \\
\times & \frac{(z+\sqrt{q}b^{-1}-1)(1-\zeta-\sqrt{q}a)(1-z)}{(\zeta+\sqrt{q}b^{-1}-1)(1-z-\sqrt{q}a) (1-\zeta)(z-\zeta)(w-\omega)(z-w)}
\end{align*}
The contours are circular, oriented counter-clockwise and should not intersect.
The contour $\gamma_1$ encloses the poles at $0$ and $1-\sqrt{q}b^{-1}$ in $\zeta$.
The contour $\gamma_2$ encloses the pole at $0$ in $\omega$. The contour $\gamma_3$ encloses the poles at $1-\sqrt{a}$ and $1-q$ in $z$.
The contour $\gamma_4$ encloses the pole at $1-q$ in $w$. Finally, we require that $|1-z| > |1-w|$ throughout the contours.

The factor $(-q)^{(j-i)/2}$ is a conjugation and appears in every $J$ matrix. So it can be removed from the determinant.
We also may replace $G^*$ by its normalization $G$ in the integrand. Doing so introduces another conjugation factor $w_c^{j-i}$, which may be removed from the determinant. Thus, we may express $J_3$ as follows.
\begin{align} \label{eqn:J3}
    J_3(i,j) &= \oint_{\gamma_1} d\zeta \, \oint_{\gamma_2} d \omega \, \oint_{\gamma_3} dz\,  \oint_{\gamma_4} dw  \\ \nonumber
& \frac{G(z \vt n-1,m-1, h-1) G(w \vt \D n, \D m, \D h)}
{G(\zeta \vt i-1, m-1, h-1) \, G(\omega \vt N-j+1, \D m, \D h)} \\ \nonumber
\times & \frac{(z+\sqrt{q}b^{-1}-1)(1-\zeta-\sqrt{q}a)(1-z)}{(\zeta+\sqrt{q}b^{-1}-1)(1-z-\sqrt{q}a) (1-\zeta)(z-\zeta)(w-\omega)(z-w)}
\end{align}

\subsubsection{Convergence of $J_3$}
Under KPZ scaling, the indices $i$ and $j$ are written as $i = n + \lceil \nu_T u \rceil$ and $j = n + \lceil \nu_T v \rceil$
for $u, v \in \R$ and $\nu_T = c_0 T^{1/3}$. The KPZ rescaled kernel for $J_3$ is
$$ J_T(u,v) = \nu_T \cdot J_3(n + \lceil \nu_T u \rceil, n + \lceil \nu_T v \rceil ).$$

We now choose contours for each of the integration variables.
\begin{align*}
& \zeta = \zeta(\sigma_1) \in \gamma_0 \left ( \frac{c_4}{w_c} \sigma_1, \frac{c_4}{w_c} d_1 \right)
& z = z(\sigma_2) \in \gamma_1\left ( \frac{c_4}{\sqrt{q}} \sigma_1, \frac{c_4}{\sqrt{q}} D_1 \right) 
& \quad K = t_1T, \\
& \omega = \omega(\sigma_3) \in \gamma_0 \left ( \frac{c_4}{w_c} \sigma_3, \frac{c_4}{w_c} d_2 \right)
& w = w(\sigma_4) \in \gamma_1\left ( \frac{c_4}{\sqrt{q}} \sigma_4, \frac{c_4}{\sqrt{q}} D_2 \right)
& \quad K =\D t T.
\end{align*}
We need to have $D_1/t_1^{1/3} < D_2/(\D t)^{1/3}$ in order to satisfy the constraint $|1-z| > |1-w|$.
We also need that $d_1 < \beta t_1^{1/3}$ and $D_1 < \alpha t_1^{1/3}$ to ensure all necessary poles are included inside the contours.

By Lemma 5.3 of \cite{JR1}, which gives decay estimates for the integrand along the chosen contours, we have the estimate
$$ \nu_T \cdot \left ( \text{integrand of}\; J_3(n + \lceil \nu_T u \rceil, n + \lceil \nu_T v \rceil) \right )
\leq C_1 e^{- C_2 (\sigma_1^2+\sigma_2^2+\sigma_3^2+\sigma_4^2)},$$
so long as $u$ and $v$ remain bounded and where $C_1$ and $C_2$ are constants that depend on $u,v$ and the parameters $t_i,x_i,\xi_i$. This allows us to use the dominated convergence theorem to
find the limiting integral for $J_T(u,v)$ by considering its point-wise limit with $u$, $v$ and the $\sigma_k$s held fixed.

With the $\sigma_k$ held fixed, Taylor expansion gives
\begin{align*}
& \zeta(\sigma_1) = w_c + \frac{c_4}{(t_1T)^{1/3}} (\mathbold{i} \sigma_1 - d_1) + C_{q,L}T^{-\frac{2}{3}}
& z(\sigma_2) = w_c + \frac{c_4}{(t_1T)^{1/3}} (\mathbold{i} \sigma_2 + D_1) + C_{q,L}T^{-\frac{2}{3}} \\
& \omega(\sigma_3) = w_c + \frac{c_4}{(\D t T)^{1/3}} (\mathbold{i} \sigma_3 - d_2) + C_{q,L}T^{-\frac{2}{3}}
& w(\sigma_4) = w_c + \frac{c_4}{(\D t T)^{1/3}} (\mathbold{i} \sigma_4 + D_2) + C_{q,L}T^{-\frac{2}{3}}.
\end{align*}
Here $C_{q,L}$ is a constant that depends on $q$ and a large $L$ for which $|t_i|, |x_i|, |\xi_i| \leq L$.

Write
$$\zeta' = (\mathbold{i}\sigma_1 -d_1)/ t_1^{1/3}, z' = (\mathbold{i}\sigma_2 + D_1)/t_1^{1/3},
\omega' = (\mathbold{i}\sigma_3-d_2)/(\D t)^{1/3}, w' = (\mathbold{i} \sigma_4 + D_2)/(\D t)^{1/3}.$$
In these new variables, at $T$ tends to infinity, the contours become
vertical lines. The contours of $\zeta'$ and $\omega'$ become, respectively, the lines $\Re(\zeta') = -d_1/t_1^{1/3}$
and $\Re(\omega') = -d_2/(\D t)^{1/3}$, oriented upwards. The $z$-contour becomes $\Re(z') = D_1/t_1^{1/3}$, oriented downwards.
The $w$-contour becomes downwardly oriented $\Re(w') = D_2/(\D t)^{1/3}$. If these contours are then oriented upwards,
we obtain a factor of $(-1)^2 = 1$.

Set $d'_1 = d_1/t^{1/3}$, $d'_2 = d_2/(\D t)^{1/3}$ and $D'_1 = D_1/t_1^{1/3}$, $D'_2 = D_2/(\D t)^{1/3}$.
The constraints on the limiting contours become $d'_1, d'_2 > 0$, $d_1' < \beta$, $0 < D'_1 < D'_2$ and $D_1' < \alpha$.

Next, we consider the behaviour of the integrand along these contours.
By Lemma 5.2 of \cite{JR1} (which gives the local behaviour of $G$ around $w_c$ under KPZ scaling), if
$$n = K - c_1 x K^{2/3} + c_0 u K^{1/3}, \, m = K + c_1 xK^{2/3}, \, h = c_2K + c_3 \xi K^{1/3}$$
and $w = w_c + (c_4/K^{1/3}) w'$, then uniformly for $w'$ in any compact set, 
$$\lim_{K \to \infty} G(w \vt n,m,h) = \G(w' \vt 1, x, \xi - u) = \exp \{ w^3/3 + x w^2 - (\xi-u)w\}.$$
Consequently, as $T \to \infty$, one has that (recall \eqref{Gtxx}):
\begin{align*}
& G(z \vt n, m, h) \to \G(t_1^{1/3}z' \vt 1,x_1, \xi_1) = \G(z' \vt t_1, x_1, \xi_1), \\
& G(w \vt \D n, \D m, \D h) \to \G((\D t)^{1/3}w' \vt 1, \D x, \D \xi) = \G(w' \vt \D t, \D x, \D \xi),\\
& G(\zeta \vt i-1, m-1, h-1) \to \G(t_1^{1/3}\zeta' \vt 1,x_1, \xi_1 - t_1^{-1/3}u) = \G(\zeta' \vt t_1, x_1, \xi_1)e^{\zeta' u}, \\
& G(\omega' \vt N+1-j, \D m, \D h) \to \G((\D t)^{1/3}\omega' \vt 1, \D x, \D \xi + (\D t)^{-1/3}v) = \G(\omega' \vt \D t, \D x, \D \xi)e^{-\omega' v}.
\end{align*}

A calculation shows that
$$\frac{\nu_T}{w_c}\, \frac{d \zeta \, d \omega \, d z \, d w}{(z-\zeta)(w-\omega)(z-w)} =
\frac{d \zeta' \, d \omega' \, d z' \, d w'}{(z'-\zeta')(w'-\omega')(z'-w')} + C_{q,L}T^{-1/3}.$$
Also, $(1-z)/(1-\zeta)$ tends to 1.
Finally, consider the term
$$ \frac{(z+\sqrt{q}b^{-1} - 1)(1-\zeta - \sqrt{q}a)}{(\zeta+\sqrt{q}b^{-1} - 1)(1-z - \sqrt{q}a)},$$
which equals
$$ \frac{z' + (\sqrt{q}/c_3 c_4)\beta}{\zeta' + (\sqrt{q}/c_3 c_4)\beta} + O(T^{-1/3}) = \frac{z' + \beta}{\zeta' + \beta} + O(T^{-1/3}).$$
Similarly,
$$\frac{1-\zeta - \sqrt{q}a}{1 - z - \sqrt{q}a} = \frac{\zeta' - \alpha}{z' -\alpha} + O(T^{-1/3}).$$

Therefore, for $u,v$ restricted to any compact set, $J_T(u,v)$ converges to
\begin{align*}
& \oint \limits_{\Gamma_{-d'_1}} d \zeta' \oint \limits_{\Gamma_{-d'_2}} d \omega'
\oint \limits_{\Gamma_{D'_1}} dz'  \oint \limits_{\Gamma_{D'_2}} dw' \\
& \frac{\G(z' \vt t_1, x_1, \xi_1) \, \G(w' \vt \D t, \D x, \D \xi)}{\G(\zeta' \vt t_1, x_1, \xi_1) \, \G(\omega' \vt \D t, \D x, \D \xi)} \cdot
\frac{e^{\omega' v - \zeta' u} (\zeta-\alpha)(z + \beta)}{(z-\alpha)(\zeta+\beta)(z'-\zeta')(w'-\omega')(z'-w')}.
\end{align*}
The constraint on the contours is that $d'_1, d'_2 > 0$, $d_1' < \beta$, $0 < D'_1 < D'_2$ and $D_1' < \alpha$.
This limit is $\J_3$ but without the conjugation factor $e^{\mu (v-u)}$.

\subsubsection{Decay estimate (goodness) for $J_3$}
In order to show $J_3$ form a good sequence of kernels, it is necessary to include the
conjugation factor $e^{\mu (v-u)}$ for a sufficiently large constant $\mu$ in front of
the KPZ re-scaled kernel $J_T(u,v)$.

By Lemma 5.3 of \cite{JR1}, one has the following estimates where $C_1$ and $C_2$ are constants
that depend only on $q$ and $L$ (recall all parameters $t_i, x_i$ and $\xi_i$  are bounded in absolute value by $L$).
\begin{align*}
& |G(\zeta(\sigma_1) \vt i-1, m-1, h)|^{-1} \leq C_1 e^{-C_2 \sigma_1^2 + \Psi(u)}\\
& |G(\omega(\sigma_3) \vt N+1-j, \D m, \D h)|^{-1} \leq C_1 e^{-C_2 \sigma_3^2 + \Psi(-v)} \\
& |G(z(\sigma_2) \vt n-1, m-1, h)| \leq C_1 e^{-C_2 \sigma_2^2} \\
& |G(w(\sigma_4) \vt \D n, \D m, \D h)| \leq C_1 e^{-C_2 \sigma_4^2}.
\end{align*}
Here $\Psi(x) = - \mu_1 (x)_{-}^{3/2} + \mu_2(x)_{+}$ for some positive constants $\mu_1$ and $\mu_2$.

There is also a constant $C_3$ that depends on $q$ and $\alpha, \beta$ such that
$$ \left | \frac{(z + \sqrt{q}b^{-1} -1)(1-\zeta-\sqrt{q}a)(1-z)}{(\zeta + \sqrt{q}b^{-1} -1)(1-z-\sqrt{q}a)(1-\zeta)(z-\zeta)(w-\omega)(z-w)} \right | \leq C_3$$
It follows from these estimates that for the kernel $J_T$,
$$e^{\mu(v-u)} |J_T(u,v)| \leq C_{q,L, \alpha, \beta} \, e^{-\mu u + \Psi(u)} \cdot e^{\mu v + \Psi(-v)}.$$
Finally, observe that for $\mu > \max \{\mu_1, \mu_2 \}$, the function $e^{-\mu x + \Psi(x)}$
is bounded and integrable. This shows the required decay estimate and completes the proof.

\section{Proof of Theorem \ref{thm:stat}} \label{sec:stat}

We shall prove this theorem in several steps. Firstly, by Theorem \ref{thm:1},
\begin{equation} \label{eqn:cdf0}
    \pr{\mathcal{L}(h_0; x_i,t_i) + \omega \leq \xi_i, i=1,2} = \frac{1}{2 \pi \mathbold{i}} \oint \limits_{|\theta| = r} d\theta\, \frac{\dt{I + F(\theta)}_{L^2(\R)}}{\theta - 1}.
\end{equation}
Here $\omega$ is an Exponential random variable with rate $\alpha + \beta$ that is independent of both $\mathcal{L}(h_0; x_i,t_i)$, $h_0$ is the initial condition $h_0(y) = B(y) - \alpha (y)_{-} -\beta(y)_{+}$ and $F$ is the kernel from Theorem \ref{thm:1}. We have to remove the weight $\omega$ and then take the limit $\alpha, \beta \to 0$ to get the desired formula for the CDF of $\mathcal{L}(B; x_i,t_i)$.

\subsection{Removing the weight -- the shift argument}
The way to remove the weight $\omega$ is explained in \cite{FS} and \cite{BFP} in what is called the shift argument. Their arguments are carried out in the exponential last passage percolation setting, but there is one additional technicality that we need to handle in our setting.
Let
\begin{equation} \label{eqn:PP} 
P^+(\xi_1,\xi_2) = \pr{\mathcal{L}(h_0; x_i,t_i) + \omega \leq \xi_i, i=1,2}, \quad P(\xi_1,\xi_2) = \pr{\mathcal{L}(h_0; x_i,t_i) \leq \xi_i, i=1,2}
\end{equation}
The shift argument presents $P$ in terms of $P^+$. In Exponential last passage percolation, $P^+(\xi_1,\xi_2) = 0$ if either $\xi_1$ or $\xi_2$ are negative. This is simply because all last passage values are positive. This fact is used in the shift argument. In our case, $P^+$ may be positive for negative $\xi_i$, so we need an auxiliary decay estimate on $P^+(\xi_1,\xi_2)$ as $\xi_i \to -\infty$.

\begin{lem} \label{lem:Pdecay}
    There are constants $C$ and $\kappa_1, \kappa_2 > 0$ such that
    $$ P^+(\xi_1,\xi_2) \leq C e^{-\kappa_1 (\xi_1)_{-}^2 -\kappa_2 (\xi_2)_{-}^2}.$$
    The same estimate hold for $P(\xi_1,\xi_2)$.
\end{lem}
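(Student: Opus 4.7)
The plan is to reduce the joint lower-tail estimate to the single-point lower tail of the droplet KPZ fixed point via two elementary monotonicity observations and Cauchy--Schwarz.

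First, since $\omega \geq 0$, the event $\{\mathcal{L}(h_0;x_i,t_i)+\omega \leq \xi_i,\, i=1,2\}$ is contained in $\{\mathcal{L}(h_0;x_i,t_i) \leq \xi_i,\, i=1,2\}$, so $P^+(\xi_1,\xi_2) \leq P(\xi_1,\xi_2)$ and it suffices to estimate $P$. Second, the variational formula $\mathcal{L}(h_0;x,t) = \sup_y\{h_0(y)+\mathcal{L}(0,y;x,t)\}$ restricted to $y = 0$, combined with the deterministic identity $h_0(0) = B(0) = 0$, yields the pointwise comparison $\mathcal{L}(h_0;x_i,t_i) \geq \mathcal{L}(0,0;x_i,t_i) =: Y_i$ for $i = 1, 2$. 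Applying Cauchy--Schwarz to indicator functions then gives
\begin{equation*}
    P(\xi_1,\xi_2) \leq \Pr(Y_1 \leq \xi_1,\, Y_2 \leq \xi_2) \leq \sqrt{\Pr(Y_1 \leq \xi_1)\,\Pr(Y_2 \leq \xi_2)}.
\end{equation*}

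Finally, each $Y_i = \mathcal{L}(0,0;x_i,t_i)$ is, up to a deterministic affine centering and rescaling by $t_i^{1/3}$, a GUE Tracy--Widom random variable. The classical lower-tail asymptotic $\log F_2(\xi) \sim -|\xi|^3/12$ as $\xi \to -\infty$ yields constants $C_i, c_i > 0$ depending only on $(x_i, t_i)$ such that $\Pr(Y_i \leq \xi) \leq C_i \exp(-c_i(\xi)_-^2)$ for every $\xi \in \R$. Substituting into the chain above proves the lemma with $\kappa_i = c_i/2$ and $C = \sqrt{C_1 C_2}$. I foresee no substantial obstacle; the only point requiring care is that the constants depend only on $(x_i, t_i)$ and not on $\alpha, \beta$, which is automatic since only the identity $h_0(0) = 0$ was used. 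This uniformity is precisely what is needed for the $\alpha, \beta \to 0$ limit in the subsequent shift argument.
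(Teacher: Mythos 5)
Your proposal is correct, and it follows the same overall skeleton as the paper (the monotonicity $P^+ \leq P$ from $\omega \geq 0$, the variational formula, Cauchy--Schwarz to decouple the two times, and a one-point lower-tail bound), but it executes the key reduction differently and more cleanly. The paper restricts the supremum to $y = \pm 1$, which leaves the Brownian contribution $B(\mp 1)$ in play; it must then condition on $L_i = \mathcal{L}(0,\mp 1;x_i,t_i)$, split on whether $L_i \geq \xi/2$ or $L_i \leq \xi/2$, and combine a Gaussian tail for $B(\mp 1)$ with a Tracy--Widom lower tail for $L_i$, ending with the Gaussian-order bound $e^{-\kappa \xi_-^2}$. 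You instead restrict to $y=0$, where $h_0(0) = B(0) = 0$ deterministically, so the Brownian term vanishes and no conditioning or case split is needed; the bound then comes directly from the Tracy--Widom lower tail of $\mathcal{L}(0,0;x_i,t_i)$, which is of cubic order $e^{-c\xi_-^3}$ and is trivially weakened to the stated quadratic bound. Your route is shorter and yields a strictly stronger estimate; your remark that the constants are independent of $\alpha,\beta$ is accurate (only $h_0(0)=0$ is used), though the paper's subsequent application of the lemma is for fixed $\alpha,\beta$, so that uniformity, while pleasant, is not actually required.
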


\begin{proof} 
    Since the weight $\omega \geq 0$, $P^+(\xi_1,\xi_2) \leq P(\xi_1,\xi_2)$. With $h_0(y) = B(y) - \alpha (y)_{-} -\beta(y)_{+}$ we have, by the variational formula for the KPZ fixed point,
    $$P(\xi_1,\xi_2) = \pr{h_0(y_1) + \mathcal{L}(0,y_1;x_1,t_1) \leq \xi_1,  h_0(y_2) + \mathcal{L}(0,y_2;x_2,t_2) \leq \xi_2\; \text{for all}\; y_1,y_2 \in \R}$$
    Therefore, by Cauchy-Schwartz,
    $$P(\xi_1,\xi_2) \leq \pr{h_0(y) + \mathcal{L}(0,y;x_1,t_1) \leq \xi_1\; \text{for all}\; y}^{1/2} \cdot \pr{h_0(y) + \mathcal{L}(0,y;x_2,t_2) \leq \xi_2\; \text{for all}\; y}^{1/2} .$$
    Define $P_i(\xi) = \pr{h_0(y) + \mathcal{L}(0,y;x_i,t_i) \leq \xi\; \text{for all}\; y}$ for $i=1,2$. It is enough to show that $P_i(\xi) \leq C e^{-\kappa_i \xi^2}$
    for $\xi \leq 0$. Set $L_1 = \mathcal{L}(0,-1; x_1,t_1)$ and $L_2 = \mathcal{L}(0,1; x_2,t_2)$.
    We find that
    $$P_1(\xi) \leq \pr{B(-1) \leq \xi_1 + \beta - L_1}, \quad P_2(\xi) = \pr{B(1) \leq \xi_2 + \alpha - L_2}.$$
    Since $B(-1)$ and $B(1)$ are normal random variables independent of the $L_i$, upon conditioning on $(L_1,L_2)$, we see that
    \begin{align*}
        P_1(\xi) &\leq \E{ \pr{B(-1) \leq \xi + \beta - L_1} \mid L_1},\\
        P_2(\xi) &\leq \E{ \pr{B(1) \leq \xi + \alpha - L_2} \mid L_2}.
    \end{align*}
    Now, \begin{align*}
        \pr{B(-1) \leq \xi + \beta - L_1 \mid L_1} &= \pr{B(-1) \leq \xi + \beta - L_1, L_1 \geq \xi/2 \mid L_1} \\
        & \qquad + \pr{B(-1) \leq \xi + \beta - L_1, L_1 \leq \xi/2 \mid L_1} \\
        &\leq \pr{B(-1) \leq \xi/2 +\beta} + \pr{L_1 \leq \xi/2 \mid L_1}
    \end{align*}
    Therefore,
    \begin{align*}
       & P_1(\xi) \leq \pr{B(-1) \leq \xi/2 + \beta} + \pr{L_1 \leq \xi/2}, \\
       &P_2(\xi) \leq \pr{B(1) \leq \xi/2 + \alpha} + \pr{L_2 \leq \xi/2}.
    \end{align*}

    The random variable $B(-1)$ is a normal, so $\pr{B(-1) \leq \xi_1/2 +\beta} \leq C e^{-\kappa (\xi_1)_{-}^2}$ for some constant $C$ and $\kappa > 0$.
    The random variable $L_1$ is a scaled GUE Tracy-Widom, so $\pr{L_1 \leq \xi/2} \leq C' e^{-\kappa' (\xi)_{-}^3}$. We infer that there is a $\kappa_1 > 0$ and constant $C_1 < \infty$ such that $P_1(\xi) \leq C e^{-\kappa_1 (\xi)_{-}^2}$.
    By the same reasoning, $P_2(\xi) \leq C e^{-\kappa_2 (\xi)_{-}^2}$.
    Therefore, $$P^+(\xi_1,\xi_2) \leq P(\xi_1,\xi_2) \leq C e^{-\kappa_1 (\xi_1)_{-}^2 -\kappa_2 (\xi_2)_{-}^2}.$$
\end{proof}

\begin{lem} \label{lem:shift}
    The probabilities $P^+(\xi_1,\xi_2)$ and $P(\xi_1,\xi_2)$ are related by the identity
    $$ P(\xi_1,\xi_2) = \left ( 1 + \frac{1}{\alpha+\beta} (\partial_{\xi_1} + \partial_{\xi_2}) \right) P^+(\xi_1,\xi_2)$$
\end{lem}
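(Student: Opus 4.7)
The plan is to exploit the independence of $\omega \sim \mathrm{Exp}(\alpha+\beta)$ from $(\mathcal{L}(h_0;x_1,t_1),\mathcal{L}(h_0;x_2,t_2))$ to express $P^+$ as a one-dimensional convolution of $P$ against an exponential density, and then invert that convolution by a single integration by parts in $t$. Setting $\lambda = \alpha+\beta$ and conditioning on $\omega$ gives
\begin{equation*}
P^+(\xi_1,\xi_2) \;=\; \int_0^\infty \lambda\, e^{-\lambda t}\, P(\xi_1-t,\,\xi_2-t)\, dt.
\end{equation*}

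The key observation is the chain-rule identity $(\partial_{\xi_1}+\partial_{\xi_2})\,P(\xi_1-t,\xi_2-t) = -\frac{d}{dt}\, P(\xi_1-t,\xi_2-t)$. Differentiating the integral representation under the integral sign gives
\begin{equation*}
(\partial_{\xi_1}+\partial_{\xi_2})\, P^+(\xi_1,\xi_2) \;=\; -\int_0^\infty \lambda\, e^{-\lambda t}\, \frac{d}{dt}P(\xi_1-t,\xi_2-t)\, dt.
\end{equation*}
An integration by parts in $t$ then splits this into a boundary term at $t=0$ equal to $\lambda\, P(\xi_1,\xi_2)$, a boundary term at $t=\infty$ that vanishes, and an interior contribution equal to $-\lambda\, P^+(\xi_1,\xi_2)$. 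Rearranging the resulting identity $(\partial_{\xi_1}+\partial_{\xi_2})P^+ = \lambda\,(P - P^+)$ yields the claim.

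The main obstacle is justifying the analytic manipulations, namely (a) differentiation under the integral, and (b) the vanishing of the boundary term as $t \to \infty$. Both are driven by Lemma \ref{lem:Pdecay}. For (b), the bound $P(\xi_1-t,\xi_2-t)\leq C e^{-\kappa_1(t-\xi_1)_+^2-\kappa_2(t-\xi_2)_+^2}$ for large $t$ forces $\lambda e^{-\lambda t}P(\xi_1-t,\xi_2-t)\to 0$. For (a), it suffices to know that the integral representation is $C^1$ in the diagonal direction $\xi_1+\xi_2$; this can be seen by transferring the $t$-derivative onto the smooth kernel $\lambda e^{-\lambda t}$ via a preliminary integration by parts (again using Lemma \ref{lem:Pdecay} to control the boundary), which exhibits $(\partial_{\xi_1}+\partial_{\xi_2})P^+$ as the limit of classical difference quotients. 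Alternatively, one reads the identity distributionally against smooth compactly supported test functions and then upgrades to a pointwise equality by continuity of both sides in $(\xi_1,\xi_2)$.
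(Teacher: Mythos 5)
Your argument is correct, and it reaches the identity by a genuinely different route from the paper. Both proofs start from the same convolution representation $P^+(\xi_1,\xi_2)=\int_0^\infty \lambda e^{-\lambda t}P(\xi_1-t,\xi_2-t)\,dt$ obtained by conditioning on $\omega$, and both lean on Lemma \ref{lem:Pdecay} for convergence at infinity. From there the paper follows the shift argument of Baik--Ferrari--P\'ech\'e: it takes the two-dimensional Laplace transform of $P^+$, obtains the algebraic relation $\widehat{P^+}=\tfrac{r}{r+s_1+s_2}\widehat{P}$, converts the factors $s_i$ into $\partial_{\xi_i}$ by integration by parts in $\xi_i$, and concludes by uniqueness of the Laplace transform. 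You instead invert the convolution directly: the chain-rule identity $(\partial_{\xi_1}+\partial_{\xi_2})P(\xi_1-t,\xi_2-t)=-\tfrac{d}{dt}P(\xi_1-t,\xi_2-t)$ plus one integration by parts in $t$ gives $(\partial_{\xi_1}+\partial_{\xi_2})P^+=\lambda(P-P^+)$, which rearranges to the claim. Your route is more elementary (no appeal to injectivity of the two-dimensional Laplace transform) but puts the regularity burden on the diagonal differentiability of $P$ and $P^+$; your suggested fix -- rewriting $c\mapsto P^+(\xi_1+c,\xi_2+c)$ as $\lambda e^{-\lambda c}\int_{-\infty}^c e^{\lambda s}P(\xi_1+s,\xi_2+s)\,ds$ so that the $t$-derivative lands on the smooth exponential kernel -- handles this adequately, and is no worse than the implicit regularity assumptions in the paper's own integration by parts in $\xi_i$. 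The Laplace-transform route buys a cleaner treatment of regularity (everything happens under integrals); your route buys a shorter, self-contained computation.
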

\begin{proof}
    The proof is the same as Proposition 2.1 of \cite{BFP}, but using Lemma \ref{lem:Pdecay}. As $\omega$ is independent of $\mathcal{L}(B; x_i,t_i)$
    and has law $\mathrm{Exp}(\alpha + \beta)$, by conditioning on $\omega$ we have that
    $$ P^+(\xi_1,\xi_2) = \int_0^{\infty} dy\, P(\xi_1-y, \xi_2-y) r e^{-ry}, \quad r = \alpha + \beta.$$
    Consider the Laplace transform of $P^+$: for $s_1, s_2 > 0$
    \begin{align*} 
    I(s_1,s_2) &= \int_{-\infty}^{\infty} d\xi_1 \int_{-\infty}^{\infty} d\xi_2\, P^+(\xi_1,\xi_2) e^{-s_1\xi_1 - s_2\xi_2} \\
    &= r \int_{-\infty}^{\infty} d\xi_1 \int_{-\infty}^{\infty} d\xi_2\, \int_0^{\infty} dy\, P(\xi_1-y, \xi_2-y) e^{-ry - s_1\xi_1 -s_2\xi_2} \\
    &= r \int_{-\infty}^{\infty} dz_1 \int_{-\infty}^{\infty} dz_2\, \int_0^{\infty} dy\, P(z_1,z_2) e^{-y(r+s_1+s_2) -s_1z_1 -s_2z_2}\\
    &= \frac{r}{r + s_1 + s_2} \int_{-\infty}^{\infty} dz_1 \int_{-\infty}^{\infty} dz_2\,P(z_1,z_2) e^{-s_1z_1 -s_2z_2}
    \end{align*}
    The decay estimate from Lemma \ref{lem:Pdecay} ensures all integrals above are absolutely convergent.

    We deduce that
    $$ \int_{-\infty}^{\infty} dz_1 \int_{-\infty}^{\infty} dz_2\,P(z_1,z_2) e^{-s_1z_1 -s_2z_2} = 
    \int_{-\infty}^{\infty} d\xi_1 \int_{-\infty}^{\infty} d\xi_2\, (1 + \frac{s_1 + s_2}{r}) P^+(\xi_1,\xi_2)e^{-s_1 \xi_1 - s_2 \xi_2}.$$
    Integration by parts shows
    $$ \int_{-\infty}^{\infty} d\xi_1 s_1 P^+(\xi_1,\xi_2)e^{-s_1 \xi_1} = \int_{-\infty}^{\infty} d\xi_1 \partial_{\xi_1} P^+(\xi_1,\xi_2)e^{-s_1\xi_1}.$$
    So we infer that
    \begin{align*}
        & \int_{-\infty}^{\infty} dz_1 \int_{-\infty}^{\infty} dz_2\,P(z_1,z_2) e^{-s_1z_1 -s_2z_2} = \\
        & \int_{-\infty}^{\infty} d\xi_1 \int_{-\infty}^{\infty} d\xi_2\, (1 + \frac{1}{r} (\partial_{\xi_1} + \partial_{\xi_2})) P^+(\xi_1,\xi_2) e^{-s_1\xi_1 -s_2\xi_2}.
    \end{align*}
    By the uniqueness of the Laplace transform, $P(z_1,z_2) = (1 + \frac{1}{r} (\partial_{z_1} + \partial_{z_2})) P^+(z_1,z_2)$.
\end{proof}

\subsection{Auxiliary lemmas}

\begin{lem} \label{lem:Iab}
    For $a, b \in \R$, define
    $$I(a,b) = \int_{-\infty}^0 d\lambda\, e^{a \lambda} \Ai(\lambda + b).$$
    If $|a| \leq A$, then there is a constant $C_A$ such that
    $$ |I(a,b)| \leq C_A e^{- \frac{2}{3} (b)^{3/2}_{+}}, \quad (b)_{+} = \max \{0,b\}.$$
\end{lem}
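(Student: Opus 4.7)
The plan is to rewrite $I(a,b)$ as a complex contour integral via the Airy integral representation
\[
\mathrm{Ai}(x) = \frac{1}{2\pi \mathbold{i}} \int_{\Gamma} e^{z^3/3 - xz}\, dz,
\]
with $\Gamma$ a contour running from $e^{-\mathbold{i}\pi/3}\infty$ to $e^{\mathbold{i}\pi/3}\infty$, and then perform a steepest-descent analysis in the parameter $b$. Substituting into the definition of $I(a,b)$, interchanging the order of integration on a suitable deformation of $\Gamma$, and computing the inner integral $\int_{-\infty}^0 e^{(a-z)\lambda}\,d\lambda = 1/(a-z)$ yields a representation of the form
\[
I(a,b) = \frac{1}{2\pi \mathbold{i}} \int_{\Gamma'} \frac{e^{z^3/3 - bz}}{a-z}\, dz \, + \, \text{(residue from the pole at } z=a\text{)},
\]
the residue contribution equalling $\pm\,e^{a^3/3 - ab}$ and arising from dragging $\Gamma$ across $z=a$.

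The main case is $b$ large and positive. Here I would deform $\Gamma'$ to the vertical steepest-descent line through the saddle $z^{\star} = \sqrt{b}$ of the cubic exponent $z^3/3 - bz$, where the exponent takes the value $-\frac{2}{3}b^{3/2}$. Parametrising $z = \sqrt{b} + \mathbold{i}t$ for $t \in \mathbb{R}$, the exponent equals $-\frac{2}{3}b^{3/2} - \sqrt{b}\,t^2 - \mathbold{i}t^3/3$, while for $b \geq (A+1)^2$ the factor $|a-z|^{-1}$ stays uniformly bounded in $|a|\leq A$. The resulting Gaussian integral then gives
\[
|I(a,b)| \leq \frac{e^{-\frac{2}{3}b^{3/2}}}{2\pi} \int_{\mathbb{R}} e^{-\sqrt{b}\,t^2}\, dt \leq C\, b^{-1/4}\, e^{-\frac{2}{3}b^{3/2}},
\]
which is the desired bound in this regime.

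For $b$ in a bounded range the target degenerates to a uniform bound $|I(a,b)| \leq C_A$. I would obtain this by estimating the integrand directly on a fixed contour whose tails sit inside the stable sectors $\cos(3\arg z) < 0$ of $e^{z^3/3}$: the super-exponential decay in these sectors, combined with uniform boundedness of $|a-z|^{-1}$ under $|a|\leq A$, yields the required uniform estimate.

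The principal technical obstacle is the interplay of the saddle-point deformation with the pole at $z=a$. When the deformation crosses $z=a$ one picks up the residue $\pm\,e^{a^3/3 - ab}$, and the hypothesis $|a|\leq A$ is precisely what is needed to ensure this term is absorbed into the right-hand side $C_A e^{-\frac{2}{3} b_+^{3/2}}$ over the relevant range of $b$. The other delicate point is the justification of the original exchange of integration orders, which requires the asymptotic directions of $\Gamma$ to be matched with the sign of $\mathrm{Re}(a-z)$ needed for the $\lambda$-integral to converge; these two constraints are compatible provided $\Gamma$ is chosen through the appropriate half-plane before any saddle-point deformation is carried out.
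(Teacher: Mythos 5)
Your overall strategy (contour representation of $\Ai$, interchange of the $\lambda$- and $z$-integrals to reach $\frac{1}{2\pi\mathbold{i}}\int \frac{e^{z^3/3-bz}}{a-z}\,dz$, steepest descent through the saddle $z=\sqrt{b}$ for large $b$, and a direct uniform bound for bounded $b$) is the same as the paper's. But there is a genuine gap at the decisive step. You assert that the residue $\pm\,e^{a^3/3-ab}$, collected when the contour is dragged across the pole at $z=a$, ``is absorbed into the right-hand side $C_A e^{-\frac{2}{3}b_+^{3/2}}$'' thanks to $|a|\leq A$. This is quantitatively false: for any fixed $a$ with $|a|\leq A$ one has $|e^{a^3/3-ab}| \geq e^{-A^3/3}e^{-Ab}$, and $e^{-Ab}/e^{-\frac{2}{3}b^{3/2}} = e^{\frac{2}{3}b^{3/2}-Ab} \to \infty$ as $b\to+\infty$; a term decaying only exponentially in $b$ can never be dominated by one decaying like $e^{-\frac{2}{3}b^{3/2}}$, and no hypothesis of the form $|a|\leq A$ repairs this. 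So if a residue really is picked up, your argument does not give the stated bound. The paper's proof takes the opposite position: it first replaces the vertical contour by a wedge $W_d$ through a point $d<a$, chosen so that $\Re(a-w)>0$ along the whole contour (this is what legitimises the interchange of integrals --- note that your contour $\Gamma$ with arms toward $e^{\pm\mathbold{i}\pi/3}\infty$ has $\Re(z)\to+\infty$, so the $\lambda$-integral diverges there and the interchange is not available until the arms are moved), and then asserts that the subsequent deformation to the saddle-point line $\Re(w)=\sqrt{b}$ can be carried out \emph{without} crossing the pole at $w=a$, so that no residue term appears at all.

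Whether a residue appears is thus exactly the point on which your write-up and the paper diverge, and it is not a detail you may leave unresolved. A sanity check at $a=0$ is instructive: $I(0,b)=\int_{-\infty}^{b}\Ai(s)\,ds \to \int_{\R}\Ai = 1$ as $b\to+\infty$, which is precisely the size of the putative residue contribution $e^{a^3/3-ab}\big|_{a=0}=1$ and is certainly not $O(e^{-\frac{2}{3}b^{3/2}})$. Your instinct that the pole at $z=a$ contributes is therefore well-founded, but your claim that this contribution is harmless is the step that fails, and as written the proposal does not establish the lemma. (The same computation indicates that the pole-crossing bookkeeping in the deformation from $W_d$ to $\Re(w)=\sqrt{b}$ merits a second look in the paper's own argument as well.)
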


\begin{proof}
    For $z \in \C$, for any $\delta > 0$,
   $$ \Ai(z) = \frac{1}{2 \pi \mathbold{i}} \oint \limits_{\Re(w)=\delta} dw\, e^{\frac{w^3}{3} - zw}.$$
    
    For $d \in \C$, let $W_d$ denote the wedge-shaped contour
    $$ W_d = \{ d + \mathbold{i}s e^{-\mathbold{i}\theta}; s\in (-\infty, 0]\} \cup \{ d + \mathbold{i}s e^{\mathbold{i}\theta}; s\in [0,\infty)\}. $$
    The contour is oriented counter-clockwise.
    Here $\theta \in (\pi/3, \pi/2)$. If $w = w(s) \in W_d$ then $\Re((w-d)^3) = |s|^3 \sin(3 \theta)$, and $\sin(3\theta) < 0$ due to the choice of $\theta$.
    Also, $\Re(w) = d - |s|\sin(\theta)$ with $\sin(\theta) > 0$.

    We may deform the contour $\Re(z) = \delta$ in the definition of $\Ai(z)$ to $W_{d}$ for any $d \in \R$. Let us choose a $d < a$. We have that
   $$I(a,b) = \int_{-\infty}^{0}d\lambda \frac{1}{2 \pi \mathbold{i}} \oint \limits_{W_d} dw\, e^{\frac{w^3}{3} - bw + \lambda(a-w)}.$$
    Since $\Re(a-w) = a-d + \sin(\theta) |s| > 0$, we can interchange the two integrals to obtain
    \begin{equation} \label{eqn:Iab}
    I(a,b) = \frac{1}{2 \pi \mathbold{i}} \oint \limits_{W_d} dw\, \frac{e^{\frac{w^3}{3}-bw}}{a-w}.
    \end{equation}

    If we also choose $d$ to be negative, then $\Re(-bw) = b|d| + b|s|\sin(\theta) \to -\infty$ as $b \to -\infty$. The dominated convergence theorem
    then implies that $\lim_{b \to -\infty} I(a,b) = 0$. Note that $I(a,b)$ is also continuous in $b$. Therefore, there is a constant $C'_A$ such that
    $$ |I(a,b)| \leq C'_A \quad \text{for all}\; b \leq A^2+1.$$

    Suppose $b \geq A^2 + 1$. Let $d' = \sqrt{b} + \mathbold{i}$ and consider $W_{d'}$. We can choose the angle $\theta = \theta_A \in (\pi/3,\pi/2)$
    sufficiently close to $\pi/2$ such that $W_{d'}$ contains $a$ in its exterior. Indeed, this will be the case so long as $W_{d'}$ intersects
    the real axis at a point $C < -A$. We have that $C = - \cos(\theta)^{-1}$, so the condition is satisfied when $\cos(\theta) < A^{-1}$,
    which will be the case if $\theta \approx \pi/2$. With such a choice to angle, we can translate the contour $W_d$ to $W_{d'}$ without
    crossing the the pole at $w = a$.

    Along the contour $W_{d'}$, $|w-a| \geq \sqrt{b} - a \geq \sqrt{A^2+1} - A >0$. We also have that $w = \sqrt{b} + \mathbold{i}t$,
    where $t = 1 + s \cos(\theta) + \mathbold{i}|s| \sin(\theta)$.
    A computation shows that $(\sqrt{b} + \mathbold{i}t)^3/3 - b(\sqrt{b} + \mathbold{i}t) = -\frac{2}{3} b^{3/2} - \sqrt{b}t^2 - \mathbold{i}\frac{t^3}{3}$.
    The real part of this equals $-\frac{2}{3} b^{3/2} - |s|^3 \frac{\sin(\theta)^3}{3} + O(s^2)$, from which we deduce that
    $$ |I(a,b)| \leq C_{\theta} e^{-\frac{2}{3}b^{3/2}} = C_A e^{- \frac{2}{3}b^{3/2}}.$$
\end{proof}

\begin{lem} \label{lem:Ibeta}
    Let $\beta > 0$ and $u \in \R$. Define
    $$ I_{\beta}(u) = e^{- \mu u} \frac{1}{2 \pi \mathbold{i}} \oint \limits_{\Re(\zeta) = -d} d \zeta\, \frac{e^{-\zeta u}}{\G(\zeta \vt t_1,x_1,\xi_1)(\zeta + \beta)}$$
    for any $0 < d < \beta$. Then, if $\mu > x_1 t^{-1/3}$, $\sup_{0 < \beta < 1} \int_{-\infty}^{\infty}du\,  |I_{\beta}(u)|^2 < \infty$.
\end{lem}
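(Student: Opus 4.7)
The strategy is to express $I_\beta(u)$ in terms of an Airy function and then invoke Lemma~\ref{lem:Iab} to obtain the required decay.

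First, I would expand the resolvent as a Laplace transform: since $\Re(\zeta+\beta) = \beta - d > 0$ on the contour, $(\zeta+\beta)^{-1} = \int_0^\infty e^{-\lambda(\zeta+\beta)}\,d\lambda$, and Fubini yields
\[
I_\beta(u) = e^{-\mu u}\int_0^\infty e^{-\beta \lambda}\,K(u+\lambda)\,d\lambda, \qquad
K(y) := \frac{1}{2\pi\mathbold{i}}\int_{\Gamma_{-d}}\frac{e^{-\zeta y}}{\G(\zeta \vt t_1, x_1, \xi_1)}\,d\zeta.
\]
Next, I would identify $K(y)$ with a scaled Airy function. The substitution $\zeta = -t_1^{-1/3}w$ followed by $v = w - x_1$ completes the cube in the exponent of $\G^{-1}$ and gives
\[
K(y) = C_0\, e^{b_1 y}\,\Ai(x_1^2 + \xi_1 - t_1^{-1/3}y), \qquad b_1 := x_1 t_1^{-1/3},
\]
with $C_0$ a constant depending only on $(t_1, x_1, \xi_1)$. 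The hypothesis $\mu > b_1$ allows the choice $d \in (b_1, \beta)$, which places the resulting Airy contour $\Gamma_{t_1^{1/3}d - x_1}$ in the right half-plane so the integral converges absolutely.

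Substituting back and changing variables $\ell = t_1^{-1/3}\lambda$ followed by $m = -\ell$ inside the $\lambda$-integral casts it in the form $t_1^{1/3}\, I\!\left((\beta-b_1) t_1^{1/3},\, Z(u)\right)$, where $Z(u) := x_1^2 + \xi_1 - t_1^{-1/3}u$ and $I(\cdot,\cdot)$ is the quantity from Lemma~\ref{lem:Iab}. Because $|(\beta-b_1)t_1^{1/3}|$ remains uniformly bounded for $\beta \in (0,1)$, that lemma produces
\[
|I_\beta(u)| \leq C_1\, e^{(b_1-\mu)u}\, e^{-\frac{2}{3}Z(u)_+^{3/2}}
\]
with $C_1$ independent of $\beta$. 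To close the argument I would square and integrate: for $u\to+\infty$, $Z_+(u) = 0$ and the factor $e^{2(b_1-\mu)u}$ decays exponentially thanks to $\mu > b_1$; for $u\to-\infty$, $Z_+(u) \sim t_1^{-1/3}|u|$ and the super-exponential factor $e^{-\frac{4}{3}Z(u)_+^{3/2}}$ overwhelms any linear-exponential growth. Both tails are integrable and the bound is uniform in $\beta$.

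The main obstacle I anticipate is executing the Fubini and Airy-identification steps cleanly when $\beta$ is small: the window $d \in (b_1, \beta)$ is nonempty only when $\beta > b_1$, which is automatic if $x_1 \le 0$ and is otherwise an implicit restriction inherited from the requirement that the defining contour integral for $I_\beta$ converge absolutely. Once the Airy form of $K$ is verified, the remainder of the argument reduces to direct bookkeeping with the estimate of Lemma~\ref{lem:Iab}.
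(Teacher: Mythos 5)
Your proposal is correct and follows essentially the same route as the paper: write $(\zeta+\beta)^{-1}$ as a Laplace transform (the paper does this after a sign flip $\zeta\to-\zeta$, which is immaterial), recognize the inner contour integral as a shifted, exponentially tilted Airy function, identify the resulting $\lambda$-integral with $I(a,b)$ from Lemma~\ref{lem:Iab} with $a=(\beta-x_1t_1^{-1/3})t_1^{1/3}$ and $b=\xi_1+x_1^2-t_1^{-1/3}u$, and conclude square-integrability uniformly in $\beta\in(0,1)$ from the condition $\mu>x_1t_1^{-1/3}$ and the superexponential Airy decay. The contour-placement subtlety you flag (needing $d>x_1t_1^{-1/3}$ for absolute convergence when $x_1>0$) is equally implicit in the paper's own argument, so it is not a defect of your proof relative to theirs.
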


\begin{proof}
    After changing variables $\zeta \to -\zeta$ and reorienting the contour upwards, we have that
    $$ I_{\beta}(u) = e^{- \mu u} \frac{1}{2 \pi \mathbold{i}} \oint \limits_{\Re(\zeta) = d} d \zeta\, \frac{\G(\zeta \vt t_1,-x_1,\xi_1) e^{\zeta u}}{\beta-\zeta}.$$
    Since $\Re(\beta - \zeta) = \beta - d > 0$, we may write $(\beta-\zeta)^{-1} = \int_{-\infty}^0 d \lambda\, e^{\lambda(\beta - \zeta)}$. Therefore,
    $$ I_{\beta}(u) =\int_{-\infty}^0 d\lambda e^{\lambda \beta - \mu u}\,
    \frac{1}{2 \pi \mathbold{i}} \oint \limits_{\Re(\zeta) = d} d \zeta\, e^{\zeta (u-\lambda)}\G(\zeta \vt t_1,-x_1,\xi_1).$$
    From the contour integral representation of the Airy function $\Ai(\cdot)$, we find that
    $$\frac{1}{2 \pi \mathbold{i}} \oint \limits_{\Re(\zeta) = d} d \zeta\, e^{\zeta (u-\lambda)}\G(\zeta \vt t_1,-x_1,\xi_1) = 
    t_1^{-1/3} e^{-\frac{2}{3}x_1^3 - x_1(\xi_1 +t_1^{-1/3}(\lambda-u))} \Ai(\xi_1 + x_1^2 + t_1^{-1/3}(\lambda - u)).$$
    Consequently,
    $$I_{\beta}(u) = t_1^{-1/3} e^{-\frac{2}{3}x_1^3 -x_1 \xi_1 + (x_1t_1^{-1/3}-\mu)u} \int_{-\infty}^0 d \lambda\, e^{\lambda(\beta -x_1 t_1^{-1/3})}
    \Ai(\xi_1 + x_1^2 + t_1^{-1/3}(\lambda-u)).$$

    The integral above equals $I(a,b)$ from Lemma \ref{lem:Iab} with $a = t^{1/3}\beta - x_1$ and $b = \xi_1 + x_1^2 - t^{-1/3}u$. Since $0 < \beta < 1$,
    Lemma \ref{lem:Iab} implies there is a constant $C_{t_1,x_1,\xi_1}$ such that
    \begin{equation} \label{eqn:Ibeta} \sup_{0 < \beta < 1} |I_{\beta}(u)| \leq C_{t_1,x_1,\xi_1} e^{(x_1t_1^{-1/3}-\mu)u} e^{t^{-1/2}u^{3/2}_-}.\end{equation}
    Here $u_- = \max \{0,-u\}$. The right side above is square integrable over $u \in \R$ if $\mu > x_1 t_1^{-1/3}$, and so the lemma follows.
\end{proof}

\begin{lem} \label{lem:Ibetaprime}
    For $\beta > 0$ consider the quantity $I_{\beta}(u)$ from Lemma \ref{lem:Ibeta}. We have that
    $$ I_{\beta}(u) = e^{u(\beta-\mu)} \G(-\beta \vt t_1,x_1,\xi_1)^{-1} + I'_{\beta}(u)$$
    where, for $\mu > x_1 t_1^{-1/3}$, $\sup_{0 < \beta < 1} \int_{-\infty}^{\infty} |I'_{\beta}(u)|^2 < \infty$.
\end{lem}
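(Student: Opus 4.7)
The plan is to extract the contribution of the simple pole of the integrand at $\zeta=-\beta$ by deforming the contour past it, and then estimate the residual contour integral by imitating the proof of Lemma \ref{lem:Ibeta}.

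For the decomposition, I fix once and for all a constant $d'>1$, so that $d'>\beta$ for every $\beta\in(0,1)$. The integrand $e^{-\zeta u}/[\G(\zeta\vt t_1,x_1,\xi_1)(\zeta+\beta)]$ has a unique simple pole in the strip $-d'<\Re(\zeta)<-d$, namely $\zeta=-\beta$, with residue $e^{\beta u}/\G(-\beta\vt t_1,x_1,\xi_1)$. The super-exponential decay of $\G(\zeta\vt t_1,x_1,\xi_1)^{-1}$ along vertical lines (Lemma 5.3 of \cite{JR1}) lets me close the rectangular contour between $\Gamma_{-d}$ and $\Gamma_{-d'}$ at infinity, and the residue theorem gives $\oint_{\Gamma_{-d}}=\oint_{\Gamma_{-d'}}+\mathrm{Res}_{\zeta=-\beta}$. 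Multiplying by $e^{-\mu u}$ reproduces the decomposition claimed in the lemma, with
\[
I'_\beta(u)=e^{-\mu u}\oint_{\Gamma_{-d'}}d\zeta\,\frac{e^{-\zeta u}}{\G(\zeta\vt t_1,x_1,\xi_1)(\zeta+\beta)}.
\]

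For the $L^2$ bound on $I'_\beta$, I repeat the manipulations of Lemma \ref{lem:Ibeta} with one sign flip. After the substitution $\zeta\to-\zeta$, the contour is $\Re(\zeta)=d'$, which now lies to the right of the point $\zeta=\beta$ (rather than to the left, as in Lemma \ref{lem:Ibeta}). Accordingly I use the representation $(\beta-\zeta)^{-1}=-\int_0^{\infty}e^{-\lambda(\zeta-\beta)}\,d\lambda$, valid because $\Re(\zeta-\beta)=d'-\beta>0$ uniformly in $\beta\in(0,1)$. Fubini and the Airy contour identity from Lemma \ref{lem:Ibeta} then produce
\[
I'_\beta(u)=-t_1^{-1/3}e^{-\tfrac{2}{3}x_1^3-x_1\xi_1+(x_1t_1^{-1/3}-\mu)u}\int_0^{\infty}d\lambda\,e^{\lambda(\beta-x_1t_1^{-1/3})}\Ai\!\left(\xi_1+x_1^2+t_1^{-1/3}(\lambda-u)\right).
\]
The remaining task is to bound $J(a,b):=\int_0^{\infty}e^{a\lambda}\Ai(\lambda+b)\,d\lambda$ by the half-line analogue of Lemma \ref{lem:Iab}, namely $|J(a,b)|\le C_A\,e^{-\tfrac{2}{3}(b)_+^{3/2}}$ whenever $|a|\le A$. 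This is proved by the same recipe as Lemma \ref{lem:Iab}: substitute the Airy contour representation along a vertical line $\Re(w)=\delta$ with $\delta>A$, swap integrals to obtain $J(a,b)=\oint_{\Re(w)=\delta}e^{w^3/3-bw}/(w-a)\,dw$, and for $b$ large positive shift to $\delta=\sqrt{b}$, reproducing \eqref{eqn:rightbound}. Boundedness for $b$ in a compact set or $b\to-\infty$ is immediate from $|\Ai|$ being bounded and from the Airy tail combining with $e^{a\lambda}$ to give convergence at $\lambda=\infty$.

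Applying this estimate with $a=t_1^{1/3}\beta-x_1$ (uniformly bounded for $\beta\in(0,1)$) and $b=\xi_1+x_1^2-t_1^{-1/3}u$ yields exactly the bound \eqref{eqn:Ibeta} for $I'_\beta$, which is square-integrable in $u$ uniformly in $\beta$ whenever $\mu>x_1t_1^{-1/3}$. The structurally key step is the contour deformation past $\zeta=-\beta$: this simultaneously isolates the non-$L^2$ piece $e^{u(\beta-\mu)}/\G(-\beta\vt t_1,x_1,\xi_1)$ (whose slow decay in $u$ comes precisely from the pole) and produces a new contour integral whose asymptotic behaviour in $u$ is of the same type as $I_\beta$ itself. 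The only genuinely new calculation is the half-line analogue of Lemma \ref{lem:Iab} — the main obstacle of the proof — but it needs only minor adjustments over the original argument: a vertical saddle-point contour lying strictly to the right of $a$ in place of a leftward wedge, with the same choice $\delta=\sqrt{b}$ responsible for the $e^{-\tfrac{2}{3}(b)_+^{3/2}}$ factor.
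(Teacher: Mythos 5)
Your decomposition is exactly the paper's: deform the $\zeta$-contour past the simple pole at $\zeta=-\beta$, identify the residue with $e^{u(\beta-\mu)}\G(-\beta\vt t_1,x_1,\xi_1)^{-1}$, and reduce the remainder, via $(\beta-\zeta)^{-1}=-\int_0^\infty e^{\lambda(\beta-\zeta)}d\lambda$ and the Airy contour identity, to the half-line integral $J(a,b)=\int_0^\infty e^{a\lambda}\Ai(\lambda+b)\,d\lambda$ with $a=t_1^{1/3}\beta-x_1$ and $b=\xi_1+x_1^2-t_1^{-1/3}u$. Up to that point everything is fine.

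The gap is in your claimed half-line analogue of Lemma \ref{lem:Iab}. The bound $|J(a,b)|\le C_A e^{-\frac{2}{3}(b)_+^{3/2}}$ is false when $a>0$ and $b\to-\infty$ — and this is exactly the regime that matters, since $b\to-\infty$ corresponds to $u\to+\infty$, and $a=t_1^{1/3}\beta-x_1>0$ whenever $x_1\le 0$ (or more generally $\beta>x_1t_1^{-1/3}$). Indeed, using the Laplace transform $\int_{-\infty}^{\infty}e^{ax}\Ai(x)\,dx=e^{a^3/3}$ for $a>0$, one has the exact identity
\begin{equation*}
J(a,b)\;=\;e^{\frac{a^3}{3}-ab}\;-\;I(a,b),
\end{equation*}
with $I(a,b)$ the quantity of Lemma \ref{lem:Iab}; since $I(a,b)$ stays bounded, $J(a,b)$ genuinely grows like $e^{a|b|}$. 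Your contour argument cannot rescue this: on $\Re(w)=\delta$ one must keep $\delta>a>0$, so $|e^{-bw}|=e^{|b|\delta}\ge e^{a|b|}$, and the pole at $w=a$ (whose residue is precisely $e^{a^3/3-ab}$) blocks any further leftward deformation. So "boundedness for $b\to-\infty$ is immediate" is not correct. The consequence is that your final bound on $I'_\beta(u)$ acquires an extra factor $e^{(\beta-x_1t_1^{-1/3})_+u}$ as $u\to+\infty$, i.e.\ $|I'_\beta(u)|\lesssim e^{(\beta-\mu)u}$ there; square-integrability uniformly over $\beta\in(0,1)$ then needs $\mu>\max\{1,\,x_1t_1^{-1/3}\}$ (or simply ``$\mu$ sufficiently large,'' which is how $\mu$ is used throughout), not merely $\mu>x_1t_1^{-1/3}$ with the strong pointwise decay you assert. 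To be fair, the paper's own proof is terse at the same spot — it invokes a saddle-point remark and ``choosing $\mu$ large'' and states a pointwise bound that likewise ignores the $e^{a|b|}$ term — but your write-up replaces that vagueness with a concrete inequality that is provably wrong, so you should either carry the $e^{\frac{a^3}{3}-ab}$ term explicitly through the estimate or restrict to $\mu$ large enough to absorb it.
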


\begin{proof}
    In the contour integral defining $I_{\beta}$, move the $\zeta$-contour to the left of $-\beta$ and collect a residue at $\zeta = -\beta$. Then we have
    $$I_{\beta}(u) = e^{u(\beta-\mu)} \G(-\beta \vt t_1,x_1,\xi_1)^{-1} + e^{-\mu u} \frac{1}{2 \pi \mathbold{i}} \oint \limits_{\Re(\zeta) = -d} d \zeta\, \frac{e^{-\zeta u}}{\G(\zeta \vt t_1,x_1,\xi_1)(\zeta + \beta)}$$
    with $d > \beta$. Set $I'_{\beta}$ to be the above integral. Changing variables $\zeta \to -\zeta$ and reorienting the contour upward gives
    $$I'_{\beta}(u) = e^{-\mu u} \frac{1}{2 \pi \mathbold{i}} \oint \limits_{\Re(\zeta) = d} d \zeta\, \frac{\G(\zeta \vt t_1,-x_1,\xi_1) e^{\zeta u}}{\beta-\zeta}$$
    but now with $\Re(\beta-\zeta) = \beta - d < 0$. Writing $(\beta-\zeta)^{-1} = - \int_{0}^{\infty} d\lambda \, e^{\lambda(\beta-\zeta)}$,
    and using the contour integral representation of the Airy function, we find that
    $$ I'_{\beta}(u) = - t_1^{-1/3} e^{- \frac{2}{3}x_1^3 + x_1\xi_1 + u(x_1 t_1^{-1/3}-\mu)} \int_0^{\infty} d\lambda\, e^{\lambda(\beta- x_1t_1^{-1/3})} \Ai(\xi-1 + x_1^2 + t_1^{-1/3}(\lambda-u)).$$

    Assume $\beta \in (0,1)$. The $\lambda$ integral is dominated by the decay of the Airy function for large $\lambda$. A saddle point analysis shows that the leading contribution comes for $\lambda \approx O(1)$. By choosing $\mu$ to be large, we can ensure that $x_1t_1^{-1/3}-\mu < 0$. Thus,
    $$ \sup_{\beta \in (0,1)} |I'_{\beta}(u)| \leq C_{t_1,x_1,\xi_1} e^{(x_1t_1^{-1/3}-\mu)u} |\Ai(-t_1^{-1/3} u)|, \quad u \in \R.$$
    The right side above is square integrable and so the lemma follows.
\end{proof}

\begin{lem} \label{lem:Iz}
For $z \in \C$ with $\Re(z) \geq 0$ and $v \in \R$, define
$$ I_z(v) = e^{\mu v} \frac{1}{2 \pi \mathbold{i}} \oint \limits_{\Re(\omega) = -d} d \omega\, \frac{e^{\omega v}}{\G(\omega \vt \D t,\D x,\D \xi) (z-\omega)}$$
with $d > 0$. For $\mu > \D x (\D t)^{-1/3}$, there is a constant $C$ that depends only of $\D t,\D x$ and $\D \xi$ such that
$$ |I_z(v)| \leq C e^{(\mu - \D x (\D t)^{-1/3})v} |\Ai((\D t)^{-1/3} v)|, \quad v \in \R.$$ 
\end{lem}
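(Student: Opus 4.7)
The plan is to reduce $I_z(v)$ to a Laplace-of-Airy contour integral and then estimate by saddle-point deformation, in close parallel with the proof of Lemma \ref{lem:Ibetaprime}. Since $\Re(z-\omega) = \Re(z)+d > 0$ on the contour, I write $(z-\omega)^{-1} = \int_0^\infty e^{-\lambda(z-\omega)}\,d\lambda$ and interchange integrals. The inner $\omega$-integral is the same one evaluated in the proof of Lemma \ref{lem:Ibeta} (after $\omega \mapsto -\omega$ and invoking the contour representation of the Airy function), so one obtains
\begin{equation*}
I_z(v) = (\D t)^{-1/3}e^{-\frac{2}{3}\D x^3 - \D x\D\xi}\,e^{(\mu-a_0)v}\int_0^\infty e^{-\lambda(z+a_0)}\,\Ai(\beta_0 + c\lambda)\,d\lambda,
\end{equation*}
with $a_0 := \D x(\D t)^{-1/3}$, $c := (\D t)^{-1/3}$, and $\beta_0 := \D\xi + \D x^2 + cv$. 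This is the complex-parameter analogue of the Laplace-of-Airy representation of $I'_\beta(u)$ in Lemma \ref{lem:Ibetaprime}, with the complex parameter $-z$ replacing the real $\beta$.

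To estimate this integral uniformly in $z$, I convert it to a single contour integral. Either by applying the substitution $\omega = c(\eta - \D x)$ directly to the original definition of $I_z(v)$, or by running $\Ai(\beta_0+c\lambda)$ through its contour representation and swapping order with the (elementary Laplace) $\lambda$-integral, one gets
\begin{equation*}
I_z(v) = -(\D t)^{-1/3}e^{-\frac{2}{3}\D x^3 - \D x\D\xi}\,e^{(\mu-a_0)v}\cdot\frac{1}{2\pi\mathbold{i}}\oint_{\Re(\eta)=\D x - d(\D t)^{1/3}}\frac{e^{\beta_0\eta - \eta^3/3}}{\eta - \tilde z}\,d\eta,
\end{equation*}
where $\tilde z := \D x + (\D t)^{1/3}z$ satisfies $\Re(\tilde z) \geq \D x$, and the pole at $\eta = \tilde z$ lies strictly to the right of the contour. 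This is the complex-parameter counterpart of the representation $I(a,b) = \frac{1}{2\pi\mathbold{i}}\oint_{W_d}\frac{e^{w^3/3 - bw}}{a-w}\,dw$ derived in Lemma \ref{lem:Iab}.

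The bound now follows by a saddle-point deformation exactly as in Lemma \ref{lem:Ibetaprime}. When $\beta_0$ is large and positive (covering the range of large positive $v$), I deform the contour to the Airy descent line $\Re(\eta) = -\sqrt{\beta_0}$; this stays to the left of the pole $\tilde z$ once $\beta_0 > \D x^2$, so no residue is collected. On this line one computes $\Re(\beta_0\eta - \eta^3/3) = -\tfrac{2}{3}\beta_0^{3/2} - \sqrt{\beta_0}\,t^2$ and $|\eta - \tilde z| \geq \sqrt{\beta_0} + \D x$; crucially, this denominator bound uses only $\Re(\tilde z) \geq \D x$ and thereby supplies the required $z$-independent constant. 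Performing the Gaussian in $t$ yields an envelope of order $\beta_0^{-3/4}e^{-\frac{2}{3}\beta_0^{3/2}}$, a constant multiple of $|\Ai(\beta_0)|$ and hence of $|\Ai((\D t)^{-1/3}v)|$ because $\beta_0$ and $cv$ differ by a fixed constant depending only on $\D x,\D\xi$. For $\beta_0$ bounded (in particular for bounded $v$), the original vertical contour directly yields a $z$-independent uniform bound, matching the Airy envelope up to multiplicative constants on that range. The main obstacle is the oscillatory regime $v \to -\infty$, where a naive $L^1$-estimate on $\int|\Ai(\beta_0+c\lambda)|\,d\lambda$ grows polynomially while the target $|\Ai(cv)|$ decays only like $|cv|^{-1/4}$; this is circumvented, as in Lemma \ref{lem:Iab}, by deforming to the Airy contour through the imaginary saddles $\eta = \pm\mathbold{i}\sqrt{-\beta_0}$, where the stationary-phase mechanism recovers the correct polynomial envelope with a constant still uniform in $z$.
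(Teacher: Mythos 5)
Your proposal is correct in substance and shares the paper's skeleton: both arguments convert the pole factor $(z-\omega)^{-1}$ into a Laplace integral (valid since $\Re(z-\omega)=\Re(z)+d>0$), evaluate the inner $\omega$-integral via the contour representation of $\Ai$, and arrive at the same Laplace-transform-of-Airy expression for $I_z(v)$. Where you diverge is in the endgame: the paper stops at that representation and disposes of the estimate with a one-line saddle-point assertion ("the main contribution comes from $\lambda\approx O(1)$"), whereas you fold the Laplace integral back into a single cubic-exponential contour integral with a pole at $\tilde z=\D x+(\D t)^{1/3}z$ and run an explicit steepest-descent deformation in each regime of $\beta_0$. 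This buys two things the paper leaves implicit: the uniformity in $z$ is reduced to the transparent lower bound $|\eta-\tilde z|\ge \Re(\tilde z)-\Re(\eta)$, and the oscillatory regime $v\to-\infty$ — which the paper's proof does not address at all, and where a naive $L^1$ bound on the Airy factor genuinely fails to reproduce the $|v|^{-1/4}$ envelope — is correctly identified and handled by passing through the imaginary saddles.

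Two soft spots remain, both of which you share with (or inherit from) the paper rather than introduce. First, in the regime $v\to-\infty$ you assert but do not verify that the steepest-descent contour through $\pm\mathbold{i}\sqrt{-\beta_0}$ can be reached without crossing the pole uniformly over $\Re(\tilde z)\ge\D x$ (when $\D x<0$ the pole may sit near the imaginary axis, so one must either indent around it or check that the residue $e^{\beta_0\tilde z-\tilde z^3/3}$ is itself dominated); in the paper's actual application $z$ ranges over $\Gamma_{D_2}$ with $D_2>0$ fixed, so this is harmless, and for the purposes of Proposition \ref{prop:J3} even a constant bound in this regime would suffice because of the prefactor $e^{(\mu-\D x(\D t)^{-1/3})v}$. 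Second, the literal bound by $|\Ai((\D t)^{-1/3}v)|$ (as opposed to $|\Ai(\D\xi+\D x^2+(\D t)^{-1/3}v)|$, which is what the saddle point naturally produces) costs a subexponential factor when $\D\xi+\D x^2<0$, and the Airy function's real zeros make the stated inequality an envelope bound rather than a pointwise one — but these imprecisions are present in the lemma as stated and in the paper's own proof, and are immaterial for how the estimate is used.
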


\begin{proof}
    Changing variables $\omega \to -\omega$ and reorienting the contour we have
    $$I_z(v) = e^{\mu v} \frac{1}{2 \pi \mathbold{i}} \oint \limits_{\Re(\omega) = d} d \omega\, \frac{\G(\omega \vt \D t,-\D x,\D \xi) e^{-\omega v}}{ z+\omega}.$$
    Since $\Re(z+\omega) > 0$, we write $(z+\omega)^{-1} = \int_{-\infty}^0 d \lambda e^{\lambda(z+\omega)}$. Then we have
    \begin{align*}
        I_z(v) &= e^{\mu v} \int_{-\infty}^0 d \lambda e^{\lambda z} \frac{1}{2 \pi \mathbold{i}} \oint \limits_{\Re(\omega) = d} d \omega\,
        \G(\omega \vt \D t, - \D x, \D \xi) e^{\omega(\lambda-v)} \\
        &= (\D t)^{-1/3} e^{- \frac{2}{3}(\D x)^{3} - \D x \D \xi  +(\mu - \D x (\D t)^{-1/3})v} \times \\
        & \int_{-\infty}^{0} d\lambda\, e^{\lambda(z + \D x (\D t)^{1/3})}
        \Ai(\D \xi + (\D x)^2 + (\D t)^{-1/3}(v-\lambda)).
    \end{align*}
    There is fast decay in $\lambda$ in the integrand above: the Airy function decays like $\Ai(-(\D t)^{-1/3} \lambda)$ for large, negative values of $\lambda$.
    Moreover, since $\Re(z) \geq 0$, $|e^{\lambda (z + \D x (\D t)^{1/3})}| \leq e^{\lambda \D x (\D t)^{1/3}}$. The main contribution to the integral comes from $\lambda \approx O(1)$. Upon choosing $\mu$ large such that $\mu - \D x (\D t)^{-1/3} >0$, we see that there is a constant $C = C_{\D t, \D x, \D \xi}$ such that
    \begin{equation} \label{eqn:Iz} |I_z(v)| \leq C e^{(\mu - \D x (\D t)^{-1/3})v} |\Ai((\D t)^{-1/3} v)|, \quad v \in \R.\end{equation}
\end{proof}

\subsection{Behaviour of $\J_i$ in the limit}
We investigate the behaviour of the kernels $\J_i$ from Theorem \ref{thm:1} as $\alpha, \beta \to 0$.
Note that $A_2$ from Theorem \ref{thm:stat} equals $J_2$ from Theorem \ref{thm:1}.

    For $d \in \R$, let $W_d$ denote the wedge-shaped contour
    $$ W_d = \{ d + \mathbold{i}s e^{-\mathbold{i}\theta}; s\in (-\infty, 0]\} \cup \{ d + \mathbold{i}s e^{\mathbold{i}\theta}; s\in [0,\infty)\}. $$
    The contour is oriented counter-clockwise, that is, from $-\infty \mathbold{i} e^{-\mathbold{i}\theta}$ to $\infty \mathbold{i} e^{\mathbold{i}\theta}$.
    Also, $\theta \in (\pi/3, \pi/2)$. If $w = w(s) \in W_d$ then $\Re((w-d)^3) = |s|^3 \sin(3 \theta)$, and $\sin(3\theta) < 0$ due to the choice of $\theta$.
    
\begin{prop} \label{prop:J1}
    Suppose $0 < \alpha, \beta < 1$. The kernel $\J_1$ admits the decomposition
    $$ \J_1 = A_1 + (\alpha + \beta) (f_1 \otimes g_1) + E,$$
    where $A_1$ is from Definition \ref{def:A} and
    \begin{align*}
        f_1(u) &= e^{-\mu u} \oint \limits_{\Gamma_{-d}} d\zeta\, \frac{e^{-\zeta u}}{\G(\zeta \vt t_1,x_1,\xi_1) (\zeta+\beta)},\\
        g_1(v) &= e^{(\mu+\alpha)v} \ind{v \leq 0} \G(\alpha \vt t_1,x_1,\xi_1).
    \end{align*}
    In the above, $d > 0$. If $\mu > \max \{0, x_1 t_1^{-1/3}\}$, then there is a constant $C$ such that $\sup_{0<\alpha,\beta < 1} || f_1 \otimes g_1||_{tr} \leq C$ and $||E||_{tr} \to 0$
    as $\alpha, \beta \to 0$.
\end{prop}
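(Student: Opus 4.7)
The plan is a partial-fraction splitting of the integrand of $\J_1$, followed by a contour deformation that identifies the main rank-one correction $f_1\otimes g_1$ and leaves an $L^2$-small rank-one remainder $E$.

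The splitting rests on the algebraic identity
$$\frac{(\zeta-\alpha)(z+\beta)}{(z-\zeta)(z-\alpha)(\zeta+\beta)} = \frac{1}{z-\zeta} - \frac{\alpha+\beta}{(z-\alpha)(\zeta+\beta)},$$
a consequence of $(\zeta-\alpha)(z+\beta) - (z-\alpha)(\zeta+\beta) = (\alpha+\beta)(\zeta-z)$. Inserting this into the double-contour integral defining $\J_1$, the first term on the right reconstructs $A_1$ by construction, and the second term factorizes into decoupled $\zeta$- and $z$-integrals: $\J_1 = A_1 - (\alpha+\beta)\, f_1\otimes h_2$, where $f_1$ is the function in the proposition (so that $f_1$ coincides with $I_\beta$ from Lemma \ref{lem:Ibeta}) and
$$h_2(v) = e^{\mu v}\ind{v\leq 0}\oint_{\Gamma_{D_1}} dz\, \frac{\G(z\vt t_1,x_1,\xi_1)\, e^{zv}}{z-\alpha}.$$
Since $D_1 < \alpha$, the pole at $z=\alpha$ lies to the right of $\Gamma_{D_1}$, and the cubic phase of $\G$ gives super-exponential decay in $|\mathrm{Im}\,z|$ along any vertical line with positive real part. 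The $z$-contour may therefore be shifted to $\Gamma_{D'_1}$ with $D'_1 > \alpha$ fixed independently of $\alpha \in (0,1)$. The rectangle traced out is clockwise, so the residue at $z=\alpha$ contributes $-\G(\alpha\vt t_1,x_1,\xi_1)e^{\alpha v}$, yielding $h_2(v) = -g_1(v) + \tilde h(v)$ with $\tilde h$ the integral over $\Gamma_{D'_1}$. This produces the asserted decomposition with $E = -(\alpha+\beta)\, f_1\otimes \tilde h$.

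The norm estimates reduce to $L^2$-bounds, since $\|f\otimes g\|_{\mathrm{tr}} = \|f\|_{L^2}\|g\|_{L^2}$ for rank-one operators. Lemma \ref{lem:Ibeta} gives $\sup_{\beta\in(0,1)}\|f_1\|_{L^2}<\infty$ under the hypothesis $\mu > x_1 t_1^{-1/3}$. A direct calculation yields $\|g_1\|_{L^2}^2 = |\G(\alpha\vt t_1,x_1,\xi_1)|^2/(2(\mu+\alpha))$, which is uniformly bounded over $\alpha \in (0,1)$ since $\G(\alpha\vt\cdot)$ is continuous at $\alpha = 0$ and $\mu>0$. For $\tilde h$, the Laplace representation $(z-\alpha)^{-1} = \int_0^\infty e^{-\lambda(z-\alpha)}d\lambda$ (valid since $\mathrm{Re}(z-\alpha)>0$ on $\Gamma_{D'_1}$) converts the $z$-integral to an Airy function, after which Lemma \ref{lem:Iab} delivers a bound of the form $|\tilde h(v)| \leq C\, e^{(\mu - x_1 t_1^{-1/3})v}|\Ai(-t_1^{-1/3}v)|$ for $v\leq 0$, uniformly in $\alpha\in(0,1)$. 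Together these bounds give $\|E\|_{\mathrm{tr}} \leq (\alpha+\beta)\|f_1\|_{L^2}\|\tilde h\|_{L^2}\to 0$ as $\alpha,\beta\to 0$.

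The principal obstacle is the uniform $L^2$-bound on $\tilde h$ as $\alpha \downarrow 0$. Although the pole at $z=\alpha$ migrates toward $\Gamma_{D_1}$, the freedom to fix $D'_1$ bounded away from $\alpha$ keeps $|z-\alpha|^{-1}$ harmless on the deformed contour, and the growth $e^{\lambda\alpha}$ introduced by the Laplace representation is absorbed by the Airy super-decay $\exp(-\tfrac23(\lambda/t_1^{1/3})^{3/2})$ for large $\lambda$, which is exactly what Lemma \ref{lem:Iab} is designed to exploit.
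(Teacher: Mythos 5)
Your proof is correct, but it takes a genuinely different route from the paper's. The paper obtains the rank-one term by pushing the $z$-contour of the full integrand of $\J_1$ past the pole at $z=\alpha$ (whose residue in the rational factor is $-(\alpha+\beta)/(\zeta+\beta)$, producing exactly $(\alpha+\beta)f_1\otimes g_1$ with the right sign), and then sets $E=A_{\alpha,\beta}-A_1$, where $A_{\alpha,\beta}$ still carries the $\alpha,\beta$-dependent rational factor and must be shown to converge to $A_1$ in trace norm. You instead isolate the $\alpha,\beta$-dependence algebraically at the outset via $(\zeta-\alpha)(z+\beta)-(z-\alpha)(\zeta+\beta)=(\alpha+\beta)(\zeta-z)$, so that $A_1$ appears \emph{exactly} (its contours being freely deformable since the only $z$-pole is at $z=\zeta$), and the entire correction factorizes as $-(\alpha+\beta)\,f_1\otimes h_2$ with $h_2=\tilde h-g_1$ after one residue. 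This buys two things: there is no ``$A_{\alpha,\beta}\to A_1$'' limit left to justify (a step the paper asserts without detail), and $E=-(\alpha+\beta)f_1\otimes\tilde h$ is manifestly rank one with the explicit bound $\|E\|_{tr}\le(\alpha+\beta)\|f_1\|_2\|\tilde h\|_2$, the factors being controlled by Lemmas \ref{lem:Ibeta} and \ref{lem:Iab} exactly as you say. Two small caveats. First, splitting the double integral requires each partial-fraction piece to converge separately; for the $\zeta$-integral on $\Gamma_{-d_1}$ with $d_1<\beta$ small this can fail on the literal vertical line when $x_1t_1^{-1/3}>d_1$, which is why the paper deforms to the wedge $W_\delta$ — your $f_1$ is $I_\beta$ from Lemma \ref{lem:Ibeta}, whose Airy rewriting supplies the needed interpretation, but this should be said. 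Second, $|\G(z\vt t_1,x_1,\xi_1)|$ on $\Gamma_D$ decays like $e^{-(t_1D+t_1^{2/3}x_1)s^2}$ (Gaussian, not super-exponential, since $\mathrm{Re}(z^3)$ is quadratic in $\mathrm{Im}\,z$ on a vertical line), so when $x_1<0$ one must take $D_1'>-x_1t_1^{-1/3}$, not merely $D_1'>\alpha$; this is harmless since $D_1'$ may be fixed as large as one likes.
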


\begin{proof}
    In the definition of $\J_1$, we can deform the $\zeta$-contour from the vertical line $\Gamma_{-d_1}$ to $W_{\delta}$ for any $0 < \delta < D_1$.
    Next, push the $z$-contour ($\Gamma_{D_1}$) to the right of $\alpha$ (since $D_1 < \alpha)$, and obtain a residue at $z = \alpha$. This gives
    $ \J_1 = A_{\alpha,\beta} + B$, where
    $$ A_{\alpha,\beta}(u,v) = e^{\mu(v-u)} \ind{v \leq 0}\, \oint \limits_{W_{\delta}} d\zeta \oint \limits_{\Gamma_{D_1}} dz\,
        \frac{\G(z \vt t_1,x_1,\xi_1) e^{zv-\zeta u}(\zeta-\alpha)(z+\beta)}{\G(\zeta \vt t_1, x_1, \xi_1) (z-\zeta)(z-\alpha)(\zeta+\beta)},$$
    with $-\beta < 0 < \delta < \alpha < D_1$, and
    $$ B(u,v) = (\alpha +\beta) f_1(u) g_1(v).$$

    We may now take the limit $\alpha, \beta \to 0$ of $A_{\alpha,\beta}$ in the trace norm to find that $A_{\alpha,\beta} \to A_1$ where
    \begin{align*}
        A_1(u,v) &= e^{\mu(v-u)} \ind{v \leq 0}\, \oint \limits_{W_{\delta}} d\zeta \oint \limits_{\Gamma_{D}} dz\,
        \frac{\G(z \vt t_1,x_1,\xi_1) e^{zv-\zeta u}}{\G(\zeta \vt t_1, x_1, \xi_1) (z-\zeta)}, \\
        &= e^{\mu(v-u)} \ind{v \leq 0}\, \oint \limits_{\Gamma_{-d}} d\zeta \oint \limits_{\Gamma_{D}} dz\,
        \frac{\G(z \vt t_1,x_1,\xi_1) e^{zv-\zeta u}}{\G(\zeta \vt t_1, x_1, \xi_1) (z-\zeta)}. \\
    \end{align*}
    In the second line we deformed the contour $W_{\delta}$ back to the vertical line $\Gamma_{-d}$.

    Note that $||B||_{tr} = (\alpha + \beta) ||f_1||_{L^2} ||g||_{L^2}$. We observe that
    $$ ||g||_{L^2} \leq |\G(\alpha \vt t_1,x_1,\xi_1)| \int_{-\infty}^{0} e^{2 \mu v} < \infty$$
    uniformly in $0 < \alpha < 1$. Also, $f_1(u) = I_{\beta}(u)$ from Lemma \ref{lem:Ibeta}, and so $\sup_{\beta \in (0,1)} ||f_1||_{L^2} < \infty$ by that lemma.
\end{proof}

\begin{prop} \label{prop:J3}
    Suppose $0 < \alpha, \beta < 1$. The kernel $\J_3$ admits the decomposition
    $$ \J_3 = A_3 + (\alpha + \beta) (f_3 \otimes g_3) + E,$$
    where $A_3$ is from Definition \ref{def:A} and
    \begin{align*}
        f_3(u) &= e^{-\mu u} \oint \limits_{\Gamma_{-d_1}} d\zeta\, \frac{e^{-\zeta u}}{\G(\zeta \vt t_1,x_1,\xi_1) (\zeta+\beta)},\\
        g_3(v) &= e^{\mu v} \oint \limits_{\Gamma_{-d_2}} d\omega  \oint \limits_{\Gamma_{D_2}} dw\,
        \frac{\G(\alpha \vt t_1,x_1,\xi_1) \G(w \vt \D t, \D x, \D \xi) e^{\omega v}}{\G(\omega \vt \D t, \D x, \D \xi) (w-\omega)(w-\alpha)}.
    \end{align*}
    In the above, $0 < d_1,d_2 < \beta$ and $\alpha < D_2$. If $\mu > \max \{x_1t_1^{-1/3}, \D x (\D t)^{-1/3}\}$, then there is a constant $C$ such that $\sup_{0<\alpha,\beta < 1} || f_3 \otimes g_3||_{tr} \leq C$ and $||E||_{tr} \to 0$ as $\alpha, \beta \to 0$.
\end{prop}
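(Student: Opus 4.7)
The proof will parallel that of Proposition \ref{prop:J1}, with two additional integration variables $\omega, w$ to handle. First, in $\J_3$, deform the $w$-contour from $\Gamma_{D_2}$ (initially with $D_2 < \alpha$) to $\Gamma_{D_2}$ with $D_2 > \alpha$; the $w$-integrand has poles only at $w = \omega$ (with $\Re(\omega) = -d_2 < 0$) and $w = z$ (with $\Re(z) = D_1 < \alpha$), neither of which lies in the swept strip, so no residue is picked up. Next, as in Proposition \ref{prop:J1}, deform $\zeta$ from $\Gamma_{-d_1}$ to the wedge $W_\delta$ with $0 < \delta < D_1$; no $\zeta$-pole is crossed since $-\beta < -d_1 < 0 < \delta$. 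Then push the $z$-contour from $\Gamma_{D_1}$ past $\alpha$ to $\Gamma_{D_1'}$ with $\alpha < D_1' < D_2$, picking up only the residue at $z = \alpha$. A direct computation, using $(\zeta - \alpha)/(\alpha - \zeta) = -1$ and $1/(\alpha - w) = -1/(w - \alpha)$, gives
\[
\mathrm{Res}_{z = \alpha}[\,\text{integrand}\,] = (\alpha + \beta)\, e^{\mu(v - u)}\, \frac{\G(\alpha)\, \G(w)\, e^{\omega v - \zeta u}}{\G(\zeta)\, \G(\omega)\, (\zeta + \beta)(w - \omega)(w - \alpha)},
\]
and integrating this over $\zeta, \omega, w$ factorises into the rank-one tensor $(\alpha + \beta)\, f_3(u)\, g_3(v)$ with $f_3, g_3$ as stated (the wedge $W_\delta$ deforms back to $\Gamma_{-d_1}$ in the expression for $f_3$ without crossing $-\beta$, since $d_1 < \beta$).

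What remains of $\J_3$ is a four-fold integral $A_{3, \alpha, \beta}$ having the same form as $A_3$ but carrying the extra prefactor $(\zeta - \alpha)(z + \beta)/[(z - \alpha)(\zeta + \beta)]$ and $z$-contour at $D_1' > \alpha$. Using the algebraic identity
\[
\frac{(\zeta - \alpha)(z + \beta)}{(z - \alpha)(\zeta + \beta)} = 1 + (\alpha + \beta)\, \frac{\zeta - z}{(z - \alpha)(\zeta + \beta)},
\]
this splits as $A_{3, \alpha, \beta} = A_3 + (\alpha + \beta)\widetilde A$, where $\widetilde A$ is the analogous four-fold integral with the extra factor $(\zeta - z)/[(z - \alpha)(\zeta + \beta)]$. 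Setting $E = (\alpha + \beta)\widetilde A$ gives the claimed decomposition $\J_3 = A_3 + (\alpha + \beta)(f_3 \otimes g_3) + E$; what remains are uniform trace-norm bounds.

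For the rank-one piece, $\|f_3 \otimes g_3\|_{\mathrm{tr}} = \|f_3\|_{L^2}\|g_3\|_{L^2}$: Lemma \ref{lem:Ibeta} gives $\sup_\beta \|f_3\|_{L^2} < \infty$ as $f_3 = I_\beta$, while for $g_3$ one recognises the inner $\omega$-integral as $I_w(v)$ from Lemma \ref{lem:Iz} (with $\Re(w) = D_2 > 0$), yielding $\|I_w\|_{L^2} \leq C$ uniformly for $w \in \Gamma_{D_2}$; Minkowski's inequality combined with the Gaussian decay of $|\G(w)|$ on $\Gamma_{D_2}$ and the lower bound $|w - \alpha| \geq D_2 - \alpha > 0$ then gives $\sup_\alpha \|g_3\|_{L^2} < \infty$. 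For $\widetilde A$, the representation $1/(z - w) = -\int_0^\infty e^{s(z - w)}\, ds$, valid since $\Re(z) = D_1' < D_2 = \Re(w)$, writes $\widetilde A$ as a composition of two Hilbert-Schmidt operators through a middle variable $s \in (0, \infty)$ that separates the $(u, \zeta, z)$-side from the $(v, \omega, w)$-side. Each HS norm is controlled by the Gaussian decay of the $\G$-factors on vertical contours together with the uniform lower bounds $|z - \alpha|, |\zeta + \beta| \geq c > 0$. The main technical obstacle is this HS factorisation together with the uniform-in-$(\alpha, \beta)$ estimates; once secured, $\|E\|_{\mathrm{tr}} = O(\alpha + \beta) \to 0$.
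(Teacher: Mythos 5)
Your proposal is correct and follows essentially the same route as the paper's proof: push the $w$-contour and then the $z$-contour to the right of $\alpha$, extract the residue at $z=\alpha$ as the rank-one term $(\alpha+\beta)\,f_3\otimes g_3$, identify the remainder with $A_3$ modulo an error vanishing in trace norm, and control $f_3=I_\beta$ and $g_3$ via Lemmas \ref{lem:Ibeta} and \ref{lem:Iz} exactly as the paper does. The one refinement you add is the identity $\tfrac{(\zeta-\alpha)(z+\beta)}{(z-\alpha)(\zeta+\beta)}=1+(\alpha+\beta)\tfrac{\zeta-z}{(z-\alpha)(\zeta+\beta)}$, which exhibits the error as explicitly $O(\alpha+\beta)$ in trace norm, whereas the paper only asserts the weaker (but sufficient) convergence of the remainder to $A_3$ ``just like in the prior proof.''
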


\begin{proof}
    In the definition of $\J_3$, we can deform the $\zeta$-contour from the vertical line $\Gamma_{-d_1}$ to $W_{\delta}$ for any $0 < \delta < D_1$.
    Next, push the $w$-contour ($\Gamma_{D_2}$) followed by the $z$-contour ($\Gamma_{D_1}$) to the right of $\alpha$ (since $D_1 < D_2 < \alpha)$, and obtain a residue at $z = \alpha$. This gives $\J_3 = A_{\alpha,\beta} + (\alpha+\beta) f_3 \otimes g_3$, where, with $\alpha < D_1 < D_2$,
    \begin{align*}
        A_{\alpha,\beta}(u,v) &= e^{\mu(v-u)} \, \oint \limits_{W_{\delta}} d\zeta \oint \limits_{\Gamma_{-d_2}} d\omega \oint \limits_{\Gamma_{D_1}} dz \oint \limits_{\Gamma_{D_2}} dw\, \\
        & \frac{\G(z \vt t_1,x_1,\xi_1) \G(w \vt \D t, \D x, \D \xi)e^{\omega v-\zeta u}}{\G(\zeta \vt t_1, x_1, \xi_1) \G(\omega \vt \D t, \D x, \D \xi)(z-\zeta)(w-\omega)(z-w)}.
    \end{align*}
    We can take the limit as $\alpha, \beta \to 0$ of $A_{\alpha,\beta}$ in the trace norm to obtain $A_3$, just like in the prior proof.
    We note that $|| f_3 \otimes g_3||_{tr} = ||f_3||_{L^2} ||g_3||_{L^2}$, and, since $f_3 = f_1 = I_{\beta}$, Lemma \ref{lem:Ibeta} gives $\sup_{0 < \beta < 1} ||f_3||_{L^2} < \infty$. Finally, we observe that
    $$ g_3(v) = \G(\alpha \vt t_1, x_1, \xi_1) \oint \limits_{\Gamma_{D_2}} dw\, \frac{\G(w \vt \D t, \D x, \D \xi)}{(w-\alpha)}\, I_w(v),$$
    where $I_w(v)$ is the quantity from Lemma \ref{lem:Iz}. Then, that lemma implies
    $$ |g_3(v)| \leq C e^{(\mu - \D x (\D t)^{-1/3})v} \Ai((\D t)^{-1/3} v)$$
    where
    $$ C = \frac{|\G(\alpha \vt t_1,x_1,\xi_1)|}{D_2 - \alpha} C_{\D t, \D x, \D \xi} \int_{-\infty}^{\infty} ds\, |\G(d_2+\mathbold{i}s \vt \D t, \D x, \D \xi)|.$$
    It follows from the bound above that $\sup_{0 < \alpha < 1} ||g_3||_{L^2} < \infty$.
\end{proof}

\begin{prop} \label{prop:J4}
    Suppose $0 < \alpha, \beta < 1$. The kernel $\J_4$ admits the decomposition
    $$ \J_4 = A_4 + (\alpha + \beta) B_4 + E,$$
    where $A_4$ is from Definition \ref{def:A}. If $\mu > \max \{x_1 t_1^{-1/3}, \D x (\D t)^{-1/3}\}$, then
    there is a constant $C$ such that $\sup_{0<\alpha,\beta < 1} || B_4||_{tr} \leq C$ and $||E||_{tr} \to 0$ as $\alpha, \beta \to 0$.
\end{prop}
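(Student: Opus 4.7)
The plan is to follow the template of Proposition \ref{prop:J3}, adapting the contour manipulations to the reversed ordering $D_1 > D_2$ in $\J_4$. First I would deform the $\zeta$-contour from $\Gamma_{-d_1}$ to the wedge $W_{\delta}$ for some $0 < \delta < D_1$ to secure the decay needed for dominated convergence later. Next I would push the $z$-contour from $\Gamma_{D_1}$ to $\Gamma_{D_1'}$ with $D_1' > \alpha$, collecting a residue at the pole $z = \alpha$. The crucial observation is that this push does \emph{not} cross the $w$-contour at $\Gamma_{D_2}$, because $D_2 < D_1 < D_1'$ throughout the deformation. In the remaining main integral I would then push the $w$-contour from $\Gamma_{D_2}$ to $\Gamma_{D_2'}$ with $\alpha < D_2' < D_1'$; since $\J_4$ has no $(w-\alpha)$ factor and $w$'s trajectory stays to the left of $z$'s new position $D_1'$, no further residue arises. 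A dominated convergence argument using the contour decay estimates then shows this main integral converges to $A_4$ in trace norm as $\alpha, \beta \to 0$, providing the error $E$.

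The residue extracted at $z = \alpha$ factorises as $(\alpha + \beta) f_4(u) g_4(v)$ where $f_4 = I_\beta$ is the same function from Lemma \ref{lem:Ibeta} that appeared in the analysis of $\J_3$, and
$$ g_4(v) = -\G(\alpha \vt t_1,x_1,\xi_1)\, e^{\mu v} \oint \limits_{\Gamma_{-d_2}} d\omega \oint \limits_{\Gamma_{D_2}} dw\, \frac{\G(w \vt \D t, \D x, \D \xi)\, e^{\omega v}}{\G(\omega \vt \D t, \D x, \D \xi)\,(w-\omega)(w-\alpha)}. $$
Because the $w$-contour inside $g_4$ is still at $\Gamma_{D_2}$ with $D_2 < \alpha \to 0$, a uniform $L^2$ bound on $g_4$ is not immediate (the pole at $w=\alpha$ sits arbitrarily close to the contour). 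My remedy is to push this inner $w$-contour past $\alpha$, collecting a second residue at $w = \alpha$. This decomposes $g_4 = -\G(\alpha)\, h_1 + \G(\alpha)^2\, h_2$ where $h_1(v) = \oint \limits_{\Gamma_{D_2'}} dw\, \frac{\G(w \vt \D t, \D x, \D \xi)}{w-\alpha}\, I_w(v)$ with $D_2'$ fixed independently of $\alpha$, and $h_2 = I_\alpha$. Both are uniformly $L^2$-bounded via Lemma \ref{lem:Iz} together with the Gaussian decay of $\G$ on the vertical contour. Setting $B_4 = -\G(\alpha) (f_4 \otimes h_1) + \G(\alpha)^2 (f_4 \otimes h_2)$ realises $B_4$ as a sum of two rank-one operators whose trace norm is uniformly bounded in $\alpha, \beta \in (0,1)$.

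The principal obstacle compared to Proposition \ref{prop:J3} is precisely this extra residue at $w = \alpha$ inside $g_4$: the reversed contour ordering $D_1 > D_2$ places the $w$-contour on the wrong side of the pole $w = \alpha$ that is created when taking the $z = \alpha$ residue, so one cannot immediately read off a single rank-one structure for $B_4$. Aside from bookkeeping the residue signs and verifying the trace-norm convergence $A_{\alpha,\beta} \to A_4$ (which proceeds exactly as in the $\J_3$ case via dominated convergence on the deformed contours), no new ideas are needed.
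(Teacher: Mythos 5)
Your proposal is correct and follows essentially the same route as the paper's proof: extract the residue at $z=\alpha$ (producing the $(\alpha+\beta)$ factor), show the remaining integral converges to $A_4$ in trace norm after repositioning the $w$-contour with no further residue, and then handle $B_4$ by pushing its $w$-contour past $\alpha$ to split it into two rank-one pieces controlled by Lemmas \ref{lem:Ibeta} and \ref{lem:Iz}. The only quibble is cosmetic: the prefactor of your $h_2$ should be $\G(\alpha \vt t_1,x_1,\xi_1)\,\G(\alpha \vt \D t,\D x,\D \xi)$ rather than a square of a single $\G(\alpha)$, which does not affect the uniform bound.
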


\begin{proof}
    The proof is similar to the proof of Proposition \ref{prop:J3}.
    In the definition of $\J_4$, push the $z$-contour to the right of $\alpha$ and obtain a residue at $z = \alpha$.
    Then push the $w$-contour to the right of $\alpha$ as well (no residue here). This gives a decomposition $\J_4 = A_{\alpha,\beta} + (\alpha+\beta) B_4$:
    \begin{align*}
        B_4(u,v) &= e^{\mu(v-u)} \oint \limits_{\Gamma_{-d_1}} d\zeta \oint \limits_{\Gamma_{-d_2}} d\omega \oint \limits_{\Gamma_{D_2}} dw\, \\
        & \frac{\G(\alpha \vt t_1,x_1,\xi_1) \G(w \vt \D t, \D x, \D \xi)}{\G(\zeta \vt t_1,x_1,\xi_1) \G(\omega \vt \D t, \D x, \D \xi)}
        \frac{e^{\omega v-\zeta u}}{(\zeta+\beta) (w-\alpha) (w-\omega)}.
    \end{align*}
    In the above $0 < d_1,d_2 < \beta$ and $D_2 < \alpha$.
    
    In the limit as $\alpha, \beta \to 0$, $A_{\alpha,\beta} \to A_4$ in the trace norm.
    Regarding $B_4$, if we push the $w$-contour to the right of $\alpha$ we obtain a residue at $w = \alpha$.
    This leads to the decomposition
    $$ B_4 = f \otimes g_{4,1} + f \otimes g_{4,2}$$
    where $f = I_{\beta}(u)$ (as in Lemma \ref{lem:Ibeta}) and $g_{4,i}$ are two functions similar to $g_3$ above.
    Arguing as in the proof of Proposition \ref{prop:J3}, we find that $\sup_{0 < \beta < 1} ||f||_{L^2} < \infty$
    as well as $\sup_{0 < \alpha < 1} ||g_{4,i}||_{L^2} < \infty$. The lemma follows.
\end{proof}

\subsection{Invertibility of kernels}
Recall the kernel
$$ A(\theta) = \theta^{\ind{u > 0}}(A_2 - A_1 + A_3) + \theta^{-\ind{u \leq 0}}(A_1 - A_2 + A_4).$$
Recall the functions $f_1$ and $g_1$ from Lemma \ref{prop:J1}. We can decompose $f_1 = I_{\beta} = f_{\beta} + I'_{\beta}$, where
\begin{equation} f_{\beta}(u) = e^{(\beta-\mu)u} \G(-\beta \vt t_1, x_1, \xi_1)^{-1}. \end{equation}
Define the rank one kernel $R(\theta)$ according to
\begin{equation} \label{eqn:Rtheta}
    [R(\theta)](u,v) = \theta^{\ind{u > 0}} f_{\beta}(u) g_1(v).
\end{equation}
The kernel $F(\theta)$ from Theorem \ref{thm:1} has the decomposition:
\begin{equation} \label{eqn:Fdecomp}
    F(\theta) = A(\theta) + E(\theta) - (\alpha + \beta) R(\theta).
\end{equation}
Owing to Propositions \ref{prop:J1}, \ref{prop:J3} and \ref{prop:J4}, we have that as $\alpha, \beta \to 0$,
$$ \max_{|\theta|=r} || F(\theta) - A(\theta)||_{tr} \to 0, \quad \max_{|\theta|=r} ||E(\theta)||_{tr} \to 0.$$
Here $r > 0$ is arbitrary.

The mapping $\theta \to \dt{I + F(\theta)}$ is holomorphic over $\theta \in \C \setminus \{0\}$. It is clearly not identically zero.
Therefore, is has a discrete set of zeroes. So there is a discrete set $Z \subset (1,\infty)$ such that if $r \notin Z$ then
$$ \min_{|\theta|=r} |\dt{I + F(\theta)}| > 0.$$
For such an $r$, $I + F(\theta)$ is invertible for every $|\theta| = r$ and $\max_{|\theta|=r} || (I + F(\theta))^{-1}||_{op} < \infty$.
Indeed, for any trace class kernel $K$ we have $||(I+K)^{-1}||_{op} \leq e^{||K||_{tr}} |\det{(I+K})|^{-1}$.

Since being invertible is an open condition for operators, for $r \notin Z$ and all $\alpha, \beta$ sufficiently small, both $I + A(\theta)$ and $I + A(\theta) + E(\theta)$ are invertible for every $|\theta| = r$. We have thus established the following lemma.

\begin{lem} \label{lem:inversion}
    There is a discrete (and hence countable) set $Z \subset (1,\infty)$ such that if $r \notin Z$ and $\alpha, \beta$ are sufficiently small, then
    the operators $I + F(\theta)$, $I + A(\theta)$ and $I + A(\theta) + E(\theta)$ are all invertible for every $|\theta|=r$ and the operator norm
    of the inverses are bounded over $|\theta| = r$.
\end{lem}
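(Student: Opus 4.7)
The plan is to handle the three invertibility statements uniformly by (i) establishing holomorphy of $\theta \mapsto \dt{I + A(\theta)}$ on $\C \setminus \{0\}$, (ii) showing this determinant is not identically zero so its zeros form a discrete set, and (iii) transferring invertibility from $I + A(\theta)$ to the other two operators via the trace-norm perturbation bounds of Propositions \ref{prop:J1}, \ref{prop:J3}, and \ref{prop:J4}. For (i), the multiplication operators $\theta^{\ind{u>0}}$ and $\theta^{-\ind{u \leq 0}}$ depend holomorphically on $\theta \in \C \setminus \{0\}$ in the bounded-operator topology on $L^2(\R)$, while the $A_i$ are fixed trace-class kernels, so $\theta \mapsto A(\theta)$ is holomorphic into the trace-class operators. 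Continuity of Fredholm determinants in trace norm then delivers holomorphy of $\theta \mapsto \dt{I + A(\theta)}$; the same applies to $\dt{I + F(\theta)}$ and $\dt{I + A(\theta) + E(\theta)}$.

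Step (ii) is the main obstacle. My approach is to invoke Theorem \ref{thm:1}: for fixed small $\alpha, \beta > 0$ and for $\xi_1, \xi_2$ sufficiently large that $P^+(\xi_1, \xi_2) > 0$ (possible since $P^+ \to 1$ as $\xi_1, \xi_2 \to \infty$, as $\mathcal{L}(h_0;x_i,t_i)+\omega$ are almost surely finite),
\[
0 < P^+(\xi_1, \xi_2) = \frac{1}{2\pi\mathbold{i}}\oint_{|\theta|=r}\frac{\dt{I + F(\theta)}}{\theta-1}\,d\theta,
\]
so $\dt{I + F(\theta)}$ is not identically zero in $\theta$ for these parameters, and by the identity theorem its zero set is discrete in $\C \setminus \{0\}$. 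To obtain a single set $Z$ working for all sufficiently small $\alpha, \beta$, I would verify non-vanishing of the limiting determinant $\dt{I + A(\theta)}$ --- this is the step that requires the most care --- and define $Z$ as the projection onto moduli of its zero set. A natural route is to exploit the asymptotics of $A(\theta)$ as $\theta \to \infty$ or as $\theta \to 0$ (where the diagonal multiplication pieces degenerate in a computable way) to preclude identical vanishing, or alternatively to fix $\theta_0$ off the discrete zero set of some $\dt{I + F_{\alpha_0,\beta_0}}$ and transfer the non-vanishing to $A(\theta_0)$ via trace-norm convergence. The identity theorem then makes $Z$ discrete.

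For any $r \notin Z$, compactness of $\{|\theta|=r\}$ forces $c_r := \inf_{|\theta|=r}|\dt{I + A(\theta)}| > 0$; combined with the standard trace-class bound $\|(I+K)^{-1}\|_{op} \leq e^{\|K\|_{tr}}|\dt{I+K}|^{-1}$ and the uniform trace-norm control of $A(\theta)$ on $|\theta|=r$, this yields $M_r := \sup_{|\theta|=r}\|(I + A(\theta))^{-1}\|_{op} < \infty$. Finally, by Propositions \ref{prop:J1}, \ref{prop:J3}, \ref{prop:J4} and the decomposition \eqref{eqn:Fdecomp}, $\sup_{|\theta|=r}\|F(\theta) - A(\theta)\|_{tr} \to 0$ and $\sup_{|\theta|=r}\|E(\theta)\|_{tr} \to 0$ as $\alpha, \beta \to 0$. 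Once these perturbations fall below $M_r^{-1}/2$ uniformly on $|\theta|=r$, the Neumann series gives invertibility of $I + F(\theta)$ and $I + A(\theta) + E(\theta)$ on this circle with operator-norm bounds on the inverses uniform in $\theta$, completing the proof.
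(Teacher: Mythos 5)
Your proposal follows essentially the same skeleton as the paper's proof: holomorphy of the Fredholm determinant in $\theta$ on $\C\setminus\{0\}$, discreteness of its zero set, the bound $\|(I+K)^{-1}\|_{op}\leq e^{\|K\|_{tr}}|\dt{I+K}|^{-1}$ to control the inverse on a circle avoiding the zero moduli, and trace-norm smallness of the perturbations from Propositions \ref{prop:J1}, \ref{prop:J3}, \ref{prop:J4} to carry invertibility to the other two operators. The one structural difference is which determinant anchors the set $Z$: the paper takes $Z$ from the zeros of $\dt{I+F(\theta)}$ and then invokes openness of invertibility to reach $I+A(\theta)$ and $I+A(\theta)+E(\theta)$, whereas you anchor on $\dt{I+A(\theta)}$ and perturb outward. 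Your choice is actually the logically cleaner one, since $A(\theta)$ is independent of $\alpha,\beta$ and so $Z$ genuinely does not depend on them, which is what the statement of the lemma requires; the paper's version leaves a mild ambiguity about the $(\alpha,\beta)$-dependence of $Z$. The price you pay is having to show $\dt{I+A(\theta)}$ is not identically zero, and here one of your two suggested routes does not work as stated: fixing $\theta_0$ off the zero set of some $\dt{I+F_{\alpha_0,\beta_0}}$ and ``transferring non-vanishing'' to $A(\theta_0)$ fails, because non-vanishing at a point is not preserved under trace-norm convergence (the limiting determinant could vanish exactly at $\theta_0$). A correct version of this idea is Hurwitz-type: $\dt{I+F_{\alpha,\beta}(\cdot)}\to\dt{I+A(\cdot)}$ uniformly on circles, and your probabilistic lower bound via Theorem \ref{thm:1} can be made uniform in small $\alpha,\beta$ (the contour integral equals $P^+(\xi_1,\xi_2)$, which stays bounded away from $0$ for large $\xi_i$ uniformly as $\alpha,\beta\to 0$), so the limit cannot be identically zero. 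That said, the paper itself simply asserts ``clearly not identically zero,'' so your treatment of this step is no less rigorous than the original.
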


\subsection{Decay estimates}
We need to establish a decay estimate for the kernel $A(\theta)$.
\begin{lem} \label{lem:Adecay}
    If $\mu$ is sufficiently large then there is a constant $C = C_{t_1,x_1,\xi_1, \D t, \D x, \D \xi}$ such that
    $$ |A_i(u,v)| \leq C e^{-u} |\Ai(-t_1^{-1/3}u)| \cdot e^{v} |\Ai((\D t)^{-1/3} v)| \quad i \in \{1,2,3,4\}.$$
    In particular, if $|\theta| = r$ then there is a constant $C'$ such that
    $$ |A(\theta)[u,v]| \leq C' e^{-u} |\Ai(-t_1^{-1/3}u)| \cdot e^{v} |\Ai((\D t)^{-1/3} v)|.$$
\end{lem}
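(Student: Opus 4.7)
The plan is to establish the bound for each $A_i$ individually via a decoupling argument, and then derive the bound for $A(\theta)$ by the triangle inequality. The central identity is $(z-\zeta)^{-1} = \int_0^\infty e^{-\lambda(z-\zeta)}\,d\lambda$, valid because $\Re(z-\zeta) = D_1 + d_1 > 0$ whenever $z \in \Gamma_{D_1}$ and $\zeta \in \Gamma_{-d_1}$. Applying this identity, together with its analogue for $(w-\omega)^{-1}$ in $A_3$ and $A_4$, decouples the $z$ and $\zeta$ contour integrals. Each resulting one-variable integral is a standard Airy integral: completing the cube in the exponent of $\G$, exactly as in the proof of Lemma \ref{lem:Ibeta}, gives
$$\oint_{\Gamma_D} dw\, \G(w \vt t,x,\xi)\, e^{-wy} = t^{-1/3}\, e^{\frac{2}{3}x^3 + x\xi + xt^{-1/3}y}\, \Ai\bigl(x^2 + \xi + t^{-1/3}y\bigr),$$
and an analogous identity holds for $\G^{-1}$ on a left-hand contour.

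For $A_2$, this factorization produces (after interchanging the $\lambda$ and contour integrals, which is justified by absolute convergence) the expression
$$A_2(u,v) = C\, e^{(\mu - \D x (\D t)^{-1/3})(v-u)}\, \ind{u>0}\, \int_0^\infty d\lambda\, \Ai\bigl(\alpha + (\D t)^{-1/3}(u+\lambda)\bigr)\, \Ai\bigl(\alpha + (\D t)^{-1/3}(v+\lambda)\bigr),$$
with $\alpha = \D x^2 + \D\xi$. After rescaling $\lambda$, the inner integral is the extended Airy kernel $K_{\Ai}(\alpha + (\D t)^{-1/3}u,\, \alpha + (\D t)^{-1/3}v)$, which is bounded pointwise via the Christoffel--Darboux representation $K_{\Ai}(x,y) = (\Ai(x)\Ai'(y) - \Ai'(x)\Ai(y))/(y-x)$ combined with the standard envelope $|\Ai(x)| \leq C(1+x_+)^{-1/4} e^{-\frac{2}{3}x_+^{3/2}}$. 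Choosing $\mu \geq \D x(\D t)^{-1/3} + 1$ delivers the desired bound on $A_2$; the factor $|\Ai(-t_1^{-1/3}u)|$ in the stated inequality is uniformly bounded for $u > 0$ and is absorbed into $C$. The treatment of $A_1$ is analogous, with the indicator $\ind{v \leq 0}$ providing the analogous uniform estimate on $|\Ai((\D t)^{-1/3}v)|$.

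For $A_3$ and $A_4$, decouple both $(z-\zeta)^{-1}$ and $(w-\omega)^{-1}$ by the same integral representation. The remaining factor $(z-w)^{-1}$ is uniformly bounded by $|D_1-D_2|^{-1}$ since the vertical contours $\Gamma_{D_1}$ and $\Gamma_{D_2}$ are disjoint (whether $D_1 < D_2$ as in $A_3$ or $D_1 > D_2$ as in $A_4$), so it contributes only a multiplicative constant, and the remaining four-fold integral again factors into Airy-type integrals in $u$ and $v$. The bound on $A(\theta)$ then follows from the triangle inequality applied to \eqref{eqn:Atheta}, together with the estimate $|\theta|^{\pm \ind{\cdot}} \leq r + r^{-1}$ on $|\theta| = r$. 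The main technical obstacle is the careful bookkeeping of the exponential factors emerging from completing the cube: they must combine with the $e^{\mu(v-u)}$ conjugation to produce the net $e^{-u} \cdot e^v$ envelope, which dictates how large $\mu$ must be taken (essentially $\mu > \max\{x_1 t_1^{-1/3},\, \D x (\D t)^{-1/3}\}$ plus a safety margin of one). A related subtlety is verifying the Airy-kernel envelope bound in the regime where one or both arguments are large and negative, where $\Ai$ oscillates; this is handled by the standard trick of deforming the Airy-integral contour to a wedge, as already used in the proof of Lemma \ref{lem:Iab}.
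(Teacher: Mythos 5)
Your proof follows essentially the same route as the paper's: decouple $(z-\zeta)^{-1}$ and $(w-\omega)^{-1}$ via the exponential integral representation (the paper writes $(z-\zeta)^{-1}=\int_{-\infty}^{0}d\lambda\,e^{\lambda(z-\zeta)}$, which is your identity after $\lambda\mapsto-\lambda$), evaluate the resulting one-variable contour integrals as Airy functions by completing the cube, bound the leftover $z$--$w$ double integral by a constant, and take $\mu$ large enough to beat the factors $e^{x_1t_1^{-1/3}u}$ and $e^{\D x(\D t)^{-1/3}v}$. The cosmetic differences — recognizing the $\lambda$-integral in $A_2$ as the Airy kernel and invoking Christoffel--Darboux, and bounding $|z-w|^{-1}\le|D_1-D_2|^{-1}$ pointwise rather than bounding the whole $z,w$ integral $I(\lambda_1,\lambda_2)$ as a unit — do not change the argument.

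One step in your write-up is stated backwards: you say the factor $|\Ai(-t_1^{-1/3}u)|$ on the right-hand side of the claimed inequality ``is uniformly bounded for $u>0$ and is absorbed into $C$.'' A factor appearing on the \emph{right} of the bound you are proving cannot be absorbed into the constant by virtue of being bounded \emph{above}; it would need to be bounded \emph{below}, and $\Ai(-t_1^{-1/3}u)$ vanishes at the Airy zeros for $u>0$ (likewise $\Ai((\D t)^{-1/3}v)$ for $v<0$). Taken literally, no non-vanishing kernel can satisfy the stated inequality at those points. This is really an imprecision in the lemma's statement rather than in your argument — the intended right-hand side is the monotone Airy envelope $(1+|u|)^{-1/4}$-type bound for negative arguments and the superexponential decay for positive ones, which is exactly what your computation produces, and which is all that the downstream application (the square-integrable majorant in Lemma \ref{lem:L2conv}) requires. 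You should simply state the bound with the envelope in place of $|\Ai(\cdot)|$ rather than claim the literal inequality.
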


\begin{proof}
    We will prove the decay estimate for $A_3$. The proof for $A_4$ is similar and even simpler for $A_1$ and $A_2$.

    In the definition of $A_3$, we can write $(z-\zeta)^{-1} = \int_{-\infty}^0 d\lambda_1 e^{\lambda_1(z-\zeta)}$
    and $(w-\omega)^{-1} = \int_{-\infty}^0 d\lambda_2 e^{\lambda_1(w-\omega)}$. We then find that
    \begin{align*}
        A_3(u,v) &= e^{\mu(v-u)} \int_{-\infty}^0 d\lambda_1 \int_{-\infty}^0 d\lambda_2 \oint \limits_{\Gamma_{-d_1}} d\zeta \oint \limits_{\Gamma_{-d_2}} d\omega \oint \limits_{\Gamma_{D_1}} dz \oint \limits_{\Gamma_{D_2}} dw\, \\ 
        & \G(\zeta \vt t_1,x_1,\xi_1)^{-1} e^{-\zeta(u+\lambda_1)} \times \G(\omega \vt \D t, \D x,\D \xi)^{-1} e^{\omega(v-\lambda_2)} \times \\
        & \G(z \vt t_1,x_1,\xi_1) \times \G(w \vt \D t, \D x,\D \xi) \times (z-w)^{-1}.
    \end{align*}
    We have that
    \begin{align*}
    \oint \limits_{\Gamma_{-d_1}} d\zeta \frac{e^{-\zeta(u+\lambda_1)}}{\G(\zeta \vt t_1,x_1,\xi_1)} &=
    t_1^{-1/3} e^{-\frac{2}{3}x_1^3-x_1\xi_1 +x_1 t_1^{-1/3}(u+\lambda_1)} \Ai(\xi_1 + x_1^2 - t_1^{-1/3}(u+\lambda_1)),\\
    \oint \limits_{\Gamma_{-d_2}} d\omega \frac{e^{\omega(v-\lambda_2)}}{\G(\omega \vt \D t,\D x,\D \xi)} &=
    (\D t)^{-1/3} e^{-\frac{2}{3}(\D x)^3-(\D x)(\D \xi) +(\D x) (\D t)^{-1/3}(\lambda_2-v)} \times \\
    & \Ai(\D \xi + (\D x)^2 + (\D t)^{-1/3}(v-\lambda_2)).
    \end{align*}
    
    Let
    $$I(\lambda_1,\lambda_2) = \oint \limits_{\Gamma_{D_1}} dz \oint \limits_{\Gamma_{D_2}} dw\,
    \frac{\G(z \vt t_1,x_1,\xi_1) G(w \vt \D t, \D x, \D \xi) e^{\lambda_1 z + \lambda_2 w}}{z-w}.$$
    By writing the above in terms of Airy functions it follows that if $\lambda_1,\lambda_2 \leq 0$, then $|I(\lambda_1,\lambda_2)| \leq C''$ for some
    constant $C''$ that depends on $t_i,x_i,\xi_i$, $i=1,2$.

    Then we see that there is a constant $C$ depending on the same parameters such that
    \begin{align*} |A_3(u,v)| &\leq C \int_{-\infty}^0 d\lambda_1 \int_{-\infty}^0 d\lambda_2 \\
    & e^{(x_1 t_1^{-1/3} - \mu)u+ x_1 t_1^{-1/3} \lambda_1)} |\Ai(\xi_1 + x_1^2 - t_1^{-1/3}(u+\lambda_1))| \times \\
    & e^{(\mu - \D x (\D t)^{-1/3})v+ \D x (\D t)^{-1/3} \lambda_2)} |\Ai(\D \xi + (\D x)^2 + (\D t)^{-1/3}(v-\lambda_1))|.
    \end{align*}
    Due to the rapid decay of the Airy function, the contribution to the integral comes from $\lambda_1, \lambda_2 \approx O(1)$.
    Then by choosing $\mu$ sufficiently large, the bound follows.
\end{proof}

As a corollary to the lemma, we obtain that for any $r > 0$,
\begin{equation} \label{eqn:Athetadet}
    \max_{|\theta|=r} | \dt{I + A(\theta)}| < \infty
\end{equation}
because the Fredholm series expansion converges absolutely. Moreover, we may write each $A_i$ as a product of two Hilbert-Schmidt kernels with entries that obey the bound from Lemma \ref{lem:Adecay}. Therefore, we have that for every $r > 0$,
\begin{equation} \label{eqn:Atracenorm}
    \max_{|\theta|=r} ||A(\theta)||_{tr} < \infty.
\end{equation}

\subsection{Taking the limit}
We begin with a trace calculation.
\begin{lem} \label{lem:tracecalc}
    Let $R(\theta)$ be the kernel from \eqref{eqn:Rtheta}. We have that
    $$ \mathrm{Tr}( R(\theta)) = \frac{\G(\alpha \vt t_1,x_1,\xi_1) \G(\beta \vt t_1, -x_1,\xi_1)}{\alpha + \beta}.$$
    Furthermore
    \begin{equation} \label{eqn:GGlimit}
    \lim_{\alpha,\beta \to 0} \frac{1-\G(\alpha \vt t_1,x_1,\xi_1) \G(\beta \vt t_1, -x_1,\xi_1)}{\alpha + \beta} = t_1^{1/3} \xi_1.
    \end{equation}
\end{lem}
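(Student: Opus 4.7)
The plan is to compute the trace as a direct integral and then Taylor expand. Because $R(\theta)(u,v) = \theta^{\ind{u>0}} f_\beta(u) g_1(v)$ is rank one, $\mathrm{Tr}\,R(\theta) = \int_{\R} \theta^{\ind{u>0}} f_\beta(u) g_1(u)\,du$. The function $g_1$ carries the indicator $\ind{u \leq 0}$, so the $\theta^{\ind{u>0}}$ factor collapses to $1$ on the support of the integrand, and in particular the trace is independent of $\theta$, as the stated formula suggests.

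Next I would substitute the explicit formulas $f_\beta(u) = e^{(\beta-\mu)u}\G(-\beta \vt t_1,x_1,\xi_1)^{-1}$ and $g_1(u) = e^{(\mu+\alpha)u}\ind{u \leq 0}\G(\alpha \vt t_1,x_1,\xi_1)$. The exponentials combine into $e^{(\alpha+\beta)u}$, pulling all $\mu$-dependence out of the integrand, and
$$\int_{-\infty}^{0} e^{(\alpha+\beta)u}\,du = \frac{1}{\alpha+\beta}.$$
This yields $\mathrm{Tr}\,R(\theta) = \frac{\G(\alpha \vt t_1,x_1,\xi_1)}{(\alpha+\beta)\,\G(-\beta \vt t_1,x_1,\xi_1)}$. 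To match the claimed form, I use the elementary identity $\G(-\beta \vt t_1,x_1,\xi_1)^{-1} = \G(\beta \vt t_1,-x_1,\xi_1)$, which is immediate from the definition \eqref{Gtxx}: the cubic and linear terms in $w$ flip sign under $w \mapsto -w$ and are compensated by $\xi \to \xi$ together with the inversion, while the quadratic term flips under $x \to -x$.

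For the limit \eqref{eqn:GGlimit}, expand the exponential defining $\G$ in powers of its first argument:
$$\G(\alpha \vt t_1,x_1,\xi_1) = 1 - t_1^{1/3}\xi_1\,\alpha + O(\alpha^2), \qquad \G(\beta \vt t_1,-x_1,\xi_1) = 1 - t_1^{1/3}\xi_1\,\beta + O(\beta^2),$$
since the lowest-order nonconstant term in $\G(w \vt t,x,\xi)$ is $-t^{1/3}\xi\, w$ (the $x$-dependent contribution is quadratic in $w$ and drops out). Multiplying gives
$$\G(\alpha \vt t_1,x_1,\xi_1)\,\G(\beta \vt t_1,-x_1,\xi_1) = 1 - t_1^{1/3}\xi_1\,(\alpha+\beta) + O((\alpha+\beta)^2),$$
so dividing $1$ minus this by $\alpha+\beta$ and sending $\alpha,\beta \to 0$ returns $t_1^{1/3}\xi_1$, as desired.

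The proof presents no real obstacle; the only mildly delicate point is observing that the $\theta^{\ind{u>0}}$ weighting in $R(\theta)$ is irrelevant on the diagonal because it meets the support restriction $v \leq 0$ inside $g_1$, and confirming the reflection identity that relates $\G(-\beta \vt t_1,x_1,\xi_1)^{-1}$ to $\G(\beta \vt t_1,-x_1,\xi_1)$.
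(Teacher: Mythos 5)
Your proof is correct and follows essentially the same route as the paper: compute the rank-one trace as $\int_{-\infty}^0 e^{(\alpha+\beta)u}\,du$ times the ratio of $\G$-factors (noting the $\theta$-weight is killed by the support of $g_1$), rewrite $\G(-\beta \vt t_1,x_1,\xi_1)^{-1}$ as $\G(\beta \vt t_1,-x_1,\xi_1)$, and Taylor expand the exponential for the limit. The only cosmetic difference is that the paper factors $(\alpha+\beta)$ out of the combined exponent before expanding, whereas you expand each factor separately; both are valid.
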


\begin{proof}
    From the definitions of $f_{\beta}$ and $g_1$ we find that (note that $g_1$ is supported on $(-\infty, 0]$)
    \begin{align*}
        \mathrm{Tr}(R(\theta)) &= \int_{-\infty}^{\infty}du\, f_{\beta}(u) g_1(u) \theta^{\ind{u > 0}} \\
        &= \int_{-\infty}^0 e^{(\alpha+\beta)u} \frac{\G(\alpha \vt t_1,x_1,\xi_1)}{\G(-\beta \vt t_1,x_1,\xi_1)} \\
        &= \frac{1}{\alpha + \beta} \frac{\G(\alpha \vt t_1,x_1,\xi_1)}{\G(-\beta \vt t_1,x_1,\xi_1)}\\
        &= \frac{\G(\alpha \vt t_1,x_1,\xi_1) \G(\beta \vt t_1, -x_1,\xi_1)}{\alpha+\beta}.
    \end{align*}

    Observe that
    \begin{align*}
    \G(\alpha \vt t_1,x_1,\xi_1) \G(\beta \vt t_1, -x_1,\xi_1) &= \exp \{\frac{t_1}{3} (\alpha^3+\beta^3) + t_1^{2/3} x_1 (\alpha^2-\beta^2) - t_1^{1/3}\xi_1(\alpha+\beta)\} \\
    &= \exp \{ (\alpha+\beta) (\frac{t_1}{3} (\alpha^2-\alpha \beta + \beta^2) + t_1^{2/3} x_1(\alpha-\beta) - t_1^{1/3}\xi_1)\}
    \end{align*}
    From Taylor expansion of the exponential we deduce that
    $$ \lim_{\alpha,\beta \to 0} \frac{1- \G(\alpha \vt t_1,x_1,\xi_1) \G(\beta \vt t_1, -x_1,\xi_1)}{\alpha+\beta} = t_1^{1/3} \xi_1.$$
\end{proof}

\begin{lem} \label{lem:L2conv}
Let $r > 1$ be such that the conclusion of Lemma \ref{lem:inversion} holds. Define
\begin{align*}
    f_{\beta}(u) &= e^{u(\beta-\mu)} \G(\beta \vt t_1,-x_1,\xi_1) \theta^{\ind{u > 0}} \\
    g_{\alpha}(v) &= e^{v(\mu + \alpha)} \ind{v \leq 0} G(\alpha \vt t_1,x_1,\xi_1).
\end{align*}
Then,
$$ \sup_{|\theta|=r}\, \left | \langle (I + A(\theta))^{-1} A(\theta) f_{\beta}, g_{\alpha} \rangle - \langle (I + A(\theta))^{-1} A(\theta) f_{0}, g_{0}\rangle \right | \to 0$$
as $\alpha,\beta \to 0$. The symbol $\langle \rangle$ denotes the inner product in $L^2(\R)$. The latter inner product is finite and bounded over $|\theta| = r$.
\end{lem}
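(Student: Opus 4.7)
The plan is to split the difference by bilinearity, swapping $f_\beta \to f_0$ and $g_\alpha \to g_0$ one variable at a time:
$$
\langle (I+A(\theta))^{-1} A(\theta) f_\beta, g_\alpha\rangle - \langle (I+A(\theta))^{-1} A(\theta) f_0, g_0\rangle = I_1(\theta) + I_2(\theta),
$$
where $I_1(\theta) = \langle (I+A(\theta))^{-1} A(\theta)(f_\beta - f_0), g_\alpha\rangle$ and $I_2(\theta) = \langle (I+A(\theta))^{-1} A(\theta) f_0, g_\alpha - g_0\rangle$. Each term will be controlled by Cauchy--Schwarz, using the uniform bound on $\|(I+A(\theta))^{-1}\|_{op}$ over $|\theta|=r$ from Lemma \ref{lem:inversion}.

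The first order of business is to verify that every inner product above is well-defined, since $f_0$ and $f_\beta$ are not in $L^2(\R)$. The estimate from Lemma \ref{lem:Adecay} factors as $|A(\theta)(u,v)| \leq C h_1(u) h_2(v)$ with $h_1(u) = e^{-u}|\Ai(-t_1^{-1/3} u)|$ and $h_2(v) = e^{v}|\Ai((\D t)^{-1/3} v)|$, both in $L^2(\R)$. Since $|f_\beta(v)| \leq C|\G(\beta \vt t_1,-x_1,\xi_1)| e^{v(\beta - \mu)} r^{\ind{v > 0}}$, taking $\mu$ sufficiently large ensures $\int h_2(v) |f_\beta(v)|\, dv \leq M$ uniformly for $\beta \in [0,1]$ and $|\theta|=r$. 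Hence $\|A(\theta) f_\beta\|_{L^2} \leq C M \|h_1\|_{L^2}$ uniformly (and the same for $f_0$), while $\|g_\alpha\|_{L^2}^2 = |\G(\alpha \vt t_1,x_1,\xi_1)|^2/(2(\mu+\alpha))$ is uniformly bounded as $\alpha \to 0$. This gives both the well-definedness and the uniform boundedness of $\langle (I+A(\theta))^{-1} A(\theta) f_0, g_0\rangle$ over $|\theta|=r$ claimed in the lemma.

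For the convergence itself, I would invoke dominated convergence to show
$$
\int h_2(v) |f_\beta(v) - f_0(v)|\, dv \to 0 \quad \text{and} \quad \|g_\alpha - g_0\|_{L^2(\R)} \to 0 \quad \text{as } \alpha, \beta \to 0.
$$
The first follows by writing $f_\beta(v) - f_0(v) = \theta^{\ind{v>0}} e^{-\mu v}\bigl[e^{\beta v}\G(\beta \vt t_1,-x_1,\xi_1) - 1\bigr]$ and noting that the bracketed factor tends to $0$ pointwise with a $\beta$-uniform majorant; the second follows analogously from $g_\alpha - g_0 = e^{\mu v}\ind{v \leq 0}\bigl[\G(\alpha \vt t_1,x_1,\xi_1)e^{\alpha v} - 1\bigr]$, where $v \leq 0$ makes the bound trivial. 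Consequently $\|A(\theta)(f_\beta - f_0)\|_{L^2} \leq C \|h_1\|_{L^2} \int h_2 |f_\beta - f_0|\, dv \to 0$ uniformly in $|\theta|=r$, and Cauchy--Schwarz gives $I_1(\theta), I_2(\theta) \to 0$ uniformly.

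The main obstacle is really just bookkeeping: ensuring that each estimate is uniform over $|\theta|=r$. Fortunately the $\theta$-dependence enters $f_\beta$ only through the bounded factor $\theta^{\ind{v>0}}$, and $\|(I+A(\theta))^{-1}\|_{op}$ is uniformly bounded over $|\theta|=r$ by Lemma \ref{lem:inversion}. Since the dominating functions in the dominated convergence arguments may be chosen independent of $\theta$, uniformity requires no additional argument.
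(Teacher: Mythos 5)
Your proposal is correct and follows essentially the same route as the paper: split the difference bilinearly, use the uniform bound on $\|(I+A(\theta))^{-1}\|_{op}$ from Lemma \ref{lem:inversion} together with Cauchy--Schwarz, control $\|A(\theta)f_\beta\|_2$ and $\|A(\theta)(f_\beta-f_0)\|_2$ via the factorized kernel bound of Lemma \ref{lem:Adecay} plus dominated convergence, and treat $g_\alpha$ directly in $L^2$. The only caveat is your parenthetical ``taking $\mu$ sufficiently large ensures $\int h_2(v)|f_\beta(v)|\,dv\leq M$'': with the literal majorant $h_2(v)=e^{v}|\Ai((\Delta t)^{-1/3}v)|$ the integrand behaves like $e^{(1+\beta-\mu)v}|v|^{-1/4}$ as $v\to-\infty$ and large $\mu$ makes it worse, not better --- the cancellation really comes from the $e^{\mu v}$ rate hidden in the kernel's $v$-decay offsetting the $e^{-\mu v}$ growth of $f_\beta$, a point the paper's own proof glosses over in the same way.
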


\begin{proof}
    Assume that $0 \leq \alpha, \beta \leq 1$.
    
    We have the pointwise limit $g_{\alpha}(u) \to g_{0}(u)$ as $\alpha \to 0$. Moreover,
    $$ g_{\alpha}^2(v) \leq C e^{2 \mu v} \ind{v \leq 0}, \quad 0 \leq \alpha \leq 1.$$
    The dominated convergence theorem then implies $|| g_{\alpha}-g_0||_{2} \to 0$ and $\sup_{0\leq \alpha \leq 1} ||g_{\alpha}||_2 < \infty$.
    Here $|| \cdot ||_2$ is the norm in $L^2(\R)$.

    We have the pointwise limit $A(\theta)[u,v]f_{\beta}(v) \to A(\theta)[u,v]f_{0}(v)$ as $\beta \to 0$. From Lemma \ref{lem:Adecay} we have the bound
    $$ |A(\theta)[u,v]f_{\beta}(v)| \leq C e^{-u} \Ai(-t_1^{-1/3}u) e^{v} \Ai((\D t)^{-1/3} v) e^{-\mu u}( e^u \ind{u > 0} + \ind{u \leq 0})$$
    uniformly over $|\theta| = r$ and $0 \leq \beta \leq 1$. The right side above is square integrable over $u,v \in \R$.
    The dominated convergence theorem implies $\sup_{|\theta|=r}|| A(\theta) (f_{\beta}-f_{0})||_2 \to 0$ and $\sup_{|\theta|=r, 0 \leq \beta \leq 1} ||A(\theta) f_{\beta}||_2 < \infty$.

    Now suppose $C$ is a constant such that $||(I+A(\theta))^{-1}||_{op} \leq C$ for all $|\theta|=r$, $||g_{\alpha}||_2 \leq C$ for all $0 \leq \alpha \leq 1$,
    and $||A(\theta)f_{\beta}||_2 \leq C$ for all $0 \leq \beta \leq 1$. We then have,
    \begin{align*}
       & \left | \langle (I + A(\theta))^{-1} A(\theta) f_{\beta}, g_{\alpha} \rangle - \langle (I + A(\theta))^{-1} A(\theta) f_{0}, g_{0}\rangle \right | \\ 
       \leq & \left | \langle (I + A(\theta))^{-1} A(\theta) (f_{\beta}-f_0), g_{\alpha} \rangle \right | + 
       \left | \langle (I + A(\theta))^{-1} A(\theta) f_{\beta}, (g_{\alpha}-g_0) \rangle \right | \\
       &\leq ||(I+A(\theta))^{-1}||_{op} \left ( \langle A(\theta) (f_{\beta}-f_0), g_{\alpha} \rangle + \langle A(\theta) f_{\beta}, (g_{\alpha}-g_0) \rangle\right) \\
       &\leq ||(I+A(\theta))^{-1}||_{op} (||A(\theta) (f_{\beta}-f_0) ||_2 ||g_{\alpha} ||_2 + ||A(\theta) f_{\beta} ||_2 ||g_{\alpha}-g_0 ||_2 )\\
       &\leq C^2 (||A(\theta) (f_{\beta}-f_0) ||_2 + ||g_{\alpha}-g_0 ||_2).
    \end{align*}
    The final quantity tends to zero as $\alpha, \beta \to 0$. We also have $| \langle (I + A(\theta))^{-1} A(\theta) f_{0}, g_{0}\rangle| \leq C^3$.
\end{proof}

Observe that the quantity $T(\theta)$ from Theorem \ref{thm:stat}, \eqref{eqn:Ttheta}, equals
\begin{equation} \label{eqn:Ttheta2}
    T(\theta) = t_1^{1/3} \xi_1 + \langle (I + A(\theta))^{-1} A(\theta) f_{0}, g_{0}\rangle.
\end{equation}
The main lemma is
\begin{lem} \label{lem:limitcalc}
As $\alpha, \beta \to 0$,
    $$\frac{\dt{I+ F(\theta)}}{\alpha + \beta} \to \dt{I + A(\theta)} \cdot T(\theta).$$
    This convergence holds uniformly over $|\theta| = r$ where $r > 1$ is according to Lemma \ref{lem:inversion}.
    For such $r$, the right side in well-defined and bounded over $|\theta|=r$.
\end{lem}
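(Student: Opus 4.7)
The plan is to exploit the rank-one structure of $R(\theta)$ in \eqref{eqn:Rtheta} together with the decomposition \eqref{eqn:Fdecomp}: $F(\theta) = A(\theta) + E(\theta) - (\alpha+\beta) R(\theta)$. For $r>1$ as in Lemma \ref{lem:inversion} and $\alpha,\beta$ sufficiently small, $I + A(\theta) + E(\theta)$ is invertible on $|\theta| = r$ with a uniformly bounded inverse, so the multiplicative property of the Fredholm determinant together with the fact that $(I + A + E)^{-1} R(\theta)$ is rank one gives
$$\dt{I + F(\theta)} = \dt{I + A + E} \cdot \bigl[1 - (\alpha+\beta)\,\mathrm{Tr}\bigl((I + A + E)^{-1} R(\theta)\bigr)\bigr].$$

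Next I would compute the trace. It equals $\langle g_1, (I + A + E)^{-1} f_\beta\rangle$, and the resolvent identity $(I + A + E)^{-1} = I - (I + A + E)^{-1}(A+E)$ splits this as $\langle g_1, f_\beta\rangle - \langle g_1, (I + A + E)^{-1}(A + E) f_\beta\rangle$. By Lemma \ref{lem:tracecalc}, $(\alpha+\beta)\langle g_1, f_\beta\rangle = (\alpha+\beta)\mathrm{Tr}(R(\theta)) = \G(\alpha \vt t_1,x_1,\xi_1)\,\G(\beta \vt t_1,-x_1,\xi_1)$. Substituting and dividing by $\alpha+\beta$ yields
$$\frac{\dt{I + F}}{\alpha+\beta} = \dt{I + A + E} \left[\frac{1 - \G(\alpha \vt t_1,x_1,\xi_1)\,\G(\beta \vt t_1,-x_1,\xi_1)}{\alpha+\beta} + \bigl\langle g_1, (I+A+E)^{-1}(A+E) f_\beta\bigr\rangle\right].$$

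To pass to the limit $\alpha,\beta \to 0$ I would argue term by term. First, $\dt{I + A + E} \to \dt{I + A}$ uniformly on $|\theta| = r$ because $\|E\|_{tr} \to 0$ uniformly by Propositions \ref{prop:J1}--\ref{prop:J4}. Second, the ratio of $\G$-factors tends to $t_1^{1/3}\xi_1$ by the final claim of Lemma \ref{lem:tracecalc}. Third, using the second resolvent identity $(I+A+E)^{-1}(A+E) - (I+A)^{-1}A = (I+A+E)^{-1} E (I+A)^{-1}$ (whose operator norm vanishes) together with Lemma \ref{lem:L2conv}, the inner product converges to $\langle (I+A)^{-1}A f_0, g_0\rangle$. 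Combining the three limits and invoking \eqref{eqn:Ttheta2} gives $\dt{I + A(\theta)} \cdot T(\theta)$, with uniformity over $|\theta| = r$ inherited from each step.

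The main obstacle is that $f_\beta \notin L^2(\R)$ — it grows like $e^{(\beta-\mu)u}$ as $u \to -\infty$ — so $R(\theta)$ is not literally a trace class operator and the multiplicative factorization step above requires justification. The resolution is to work with $R(\theta)$ as a formal integral kernel and observe that (i) the pairing $\langle g_1, f_\beta\rangle$ is absolutely convergent because $g_1$ is supported on $(-\infty,0]$ and decays like $e^{(\mu+\alpha)v}$, compensating for the growth of $f_\beta$ and matching the formula in Lemma \ref{lem:tracecalc}; and (ii) although $(I+A+E)^{-1} f_\beta$ must be interpreted through the resolvent expansion as $f_\beta$ plus a genuine $L^2$ correction, $(A+E) f_\beta$ is honestly in $L^2$ thanks to the Airy-type decay of Lemma \ref{lem:Adecay}, so all inner products appearing above are well defined and uniformly bounded for $|\theta| = r$ and $\alpha,\beta$ near zero.
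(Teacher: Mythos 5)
Your proof is correct and follows essentially the same route as the paper: the decomposition \eqref{eqn:Fdecomp}, the rank-one determinant factorization, the resolvent identity to isolate $\mathrm{Tr}(R(\theta))$, and then Lemma \ref{lem:tracecalc}, the limit \eqref{eqn:GGlimit} and Lemma \ref{lem:L2conv} for the three limiting terms. Your closing observation that $f_\beta \notin L^2(\R)$, so that $R(\theta)$ is only a formal rank-one kernel and the relevant quantities must be interpreted through the absolutely convergent pairing $\langle g_1, f_\beta\rangle$ and through $A(\theta)f_\beta$, is a legitimate point of care that the paper passes over silently (it is implicitly what Lemma \ref{lem:L2conv} is designed to handle) and is worth keeping.
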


\begin{proof}
    We decompose $F(\theta)$ as in \eqref{eqn:Fdecomp}. Thus,
    $$ I + F(\theta) = I + A(\theta) - (\alpha+\beta) R(\theta) + E(\theta).$$
    Assume $\alpha, \beta$ are small enough and $r$ is such that Lemma \ref{lem:inversion} holds for every $|\theta| = r$.
    In the argument below to only consider $\theta$ lying on the circle $|\theta| = r$.

    Since $R(\theta)$ has rank 1,
    $$ \dt{I + F(\theta)} = \dt{I + A(\theta) + E(\theta)} \left( 1 - (\alpha+\beta) \mathrm{Tr}((I+A(\theta)+E(\theta))^{-1} R(\theta)\right).$$
    Set $L = A(\theta) + E(\theta)$. Since $\max_{|\theta|=r}||E(\theta)||_{tr} \to 0$ as $\alpha, \beta \to 0$, $L \to A(\theta)$ uniformly in the
    trace norm over $\theta$.
    
    We use the identity
    $$ (I+L)^{-1} R(\theta) = (I+L)^{-1}(I+L-L) R(\theta) = R(\theta) - (I+L)^{-1}L R(\theta).$$
    This implies
    $$ \dt{I + F(\theta)} = \dt{I + L} \left( 1 - (\alpha+\beta) \mathrm{Tr}(R(\theta)) + (\alpha+\beta) \mathrm{Tr}((I+L)^{-1} L R(\theta))\right).$$
    By Lemma \ref{lem:tracecalc},
    $$\frac{\dt{I + F(\theta)}}{\alpha+\beta} =
    \left [ \frac{1 - \G(\alpha \vt t_1,x_1,\xi_1)\G(\beta \vt t_1,-x_1,\xi_1)}{\alpha + \beta} + \mathrm{Tr}((I+L)^{-1}LR(\theta))\right ] \dt{I+L}.$$
    We have that $\dt{I+L} \to \dt{I + A(\theta)}$ uniformly in $\theta$ because $||E(\theta)||_{tr} \to 0$ uniformly in $\theta$ as $\alpha, \beta \to 0$.
    Similarly, $\mathrm{Tr}((I+L)^{-1}LR(\theta)) - \mathrm{Tr}((I+A(\theta))^{-1} A(\theta)R(\theta)) \to 0$ uniformly in $\theta$.
    Finally, we note that
    $$\mathrm{Tr}((I+A(\theta))^{-1} A(\theta)R(\theta)) = \langle (I + A(\theta))^{-1} A(\theta) f_{\beta}, g_{\alpha} \rangle \to
    \langle (I + A(\theta))^{-1} A(\theta) f_{0}, g_{0}\rangle$$
    uniformly in $\theta$ by Lemma \ref{lem:L2conv}. So by \eqref{eqn:GGlimit}, the lemma follows.   
\end{proof}
\subsection{Completing the proof}
By Theorem \ref{thm:1} and Lemma \ref{lem:shift} we find that
$$\pr{\mathcal{L}(h_0; x_i,t_i) \leq \xi_i, i=1,2} = \lim_{\alpha,\beta \to 0}\, (1 + \frac{1}{\alpha+\beta}(\partial_{\xi_1} + \partial_{\xi_2}))
\frac{1}{2 \pi \mathbold{i}} \oint \limits_{|\theta|=r} d\theta\, \frac{\dt{I + F(\theta)}}{\theta-1}.$$

Suppose $r$ is chosen according to Lemma \ref{lem:inversion}. The right side above boils down the evaluating the limit as $\alpha,\beta \to 0$ of
$$\frac{1}{2 \pi \mathbold{i}} \oint \limits_{|\theta|=r} \frac{d\theta}{\theta-1} (\partial_{\xi_1} + \partial_{\xi_2}) \frac{\dt{I + F(\theta)}}{\alpha+\beta}.$$
By Lemma \ref{lem:limitcalc}, the limit of $\dt{I+F(\mathbold{\theta})}/(\alpha+\beta)$ equals $\dt{I + A(\theta)} \cdot T(\theta)$, as required.

\end{document}